\newtheorem{proposition}{Proposition}[section]
\newtheorem{theorem}[proposition]{Theorem}
\newtheorem{corollary}[proposition]{Corollary}
\newtheorem{lemma}[proposition]{Lemma}
\newtheorem{example}[proposition]{Example}
\newtheorem{question}[proposition]{Question}
\newtheorem{remark}[proposition]{Remark}
\numberwithin{equation}{section}
\newcommand{\newword}[1]{\textbf{\emph{#1}}}
\newcommand{\poset}{\mathcal{P}}
\renewcommand{\Join}{\bigvee}
\newcommand{\Meet}{\bigwedge}
\renewcommand{\th}{^{\text{th}}}
\newcommand{\meet}{\wedge}
\newcommand{\join}{\vee}
\newcommand{\A}{\mathcal{A}}
\newcommand{\E}{\mathcal{E}}
\newcommand{\G}{\mathcal{G}}
\newcommand{\K}{\mathcal{K}}
\newcommand{\T}{\mathcal{T}}
\newcommand{\pidown}{\pi_\downarrow}
\newcommand{\piup}{\pi^\uparrow}
\newcommand{\covers}{{\,\,\,\cdot\!\!\!\! >\,\,}}
\newcommand{\1}{\hat{1}}
\newcommand{\0}{\hat{0}}
\newcommand{\2}{\mathbf{2}}
\newcommand{\refines}{{\,\underline{\ll}\,}}
\newcommand{\can}{\operatorname{can}}
\newcommand{\cov}{\operatorname{cov}}
\newcommand{\Irr}{\operatorname{Irr}}
\newcommand{\ijr}{\operatorname{ijr}}
\newcommand{\covdown}{\cov_{\downarrow}}
\newcommand{\covup}{\cov^{\uparrow}}
\newcommand{\wedgesum}{\mathbin{\rotatebox[origin=c]{270}{$\succ$}}}
\newcommand{\Row}{\operatorname{Row}}
\newcommand{\tors}{\operatorname{tors}}
\author{Emily Barnard}
\title{The Canonical Join Complex}
\address{Department of Mathematics, North Carolina State University, Raleigh, NC, USA}
\begin{document}
\maketitle

\begin{abstract}
In this paper, we study the combinatorics of a certain minimal factorization of the elements in a finite lattice $L$ called the canonical join representation.
The join $\Join A =w$ is the canonical join representation of $w$ if $A$ is the unique lowest subset of $L$ satisfying $\Join A=w$ (where ``lowest'' is made precise by comparing order ideals under containment).
When each element in $L$ has a canonical join representation, we define the canonical join complex to be the abstract simplicial complex of subsets $A$ such that $\Join A$ is a canonical join representation.
We characterize the class of finite lattices whose canonical join complex is flag, and show how the canonical join complex is related to the topology of $L$.
\end{abstract}

\setcounter{tocdepth}{2}
\tableofcontents
\section{Introduction}
In a finite lattice $L$, the canonical join representation of an element $w$ is a certain unique minimal factorization of $w$ in terms of the join operation.
Specifically, the join-representation $\Join A = w$ is the \newword{canonical join representation} of $w$ if the join $\Join A$ is irredundant and the set $A$ is taken as low as possible in the partial order on $L$.
(See Section~\ref{background} for the precise definition.) 
There is an analogous factorization in terms of the meet operation called the \newword{canonical meet representation} that is defined dually (replacing ``$\Join$'' with ``$\Meet$'' and  ``lowest'' with ``highest'' in the sentence above).
The canonical join representation or canonical meet representation for a given element may not exist.
See Figure~\ref{fig:meet_semi} for two examples.
If each element in $L$ has a canonical join representation then $L$ is \newword{join-semidistributive}.
We say that $L$ is \newword{semidistributive} if each element \textit{also} has a canonical meet representation.
(See Section~\ref{background} and Theorem~\ref{join_semi_cjr} in particular for an equivalent definition.) 

When $L$ is join-semidistributive, we define the \newword{canonical join complex} to be the abstract simplicial complex whose faces are the subsets $A\subset L$ such that the join $\Join A$ is a canonical join representation.
(Proposition~\ref{simplicial complex} says that this is indeed a complex.)
We define the \newword{canonical meet complex} similarly.
Recall that a simplicial complex is \newword{flag} if it is the clique complex of its 1-skeleton, or equivalently, its minimal non-faces have size two.
Our main result is:

\begin{theorem}\label{flag}
Suppose $L$ is a finite join-semidistributive lattice.
Then the canonical join complex of $L$ is flag if and only if $L$ is semidistributive.
\end{theorem}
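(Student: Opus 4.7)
The plan is to first isolate a useful characterization of canonical join representations in the join-semidistributive setting. In such a lattice, each join-irreducible $j$ has a unique lower cover $j_*$. A standard fact (which I would re-derive or cite from the preliminaries) is that a set $\{j_1,\dots,j_k\}$ of join-irreducibles is a canonical join representation of its join exactly when it is irredundant and, for each $i$, the meet $j_i\wedge\bigvee_{\ell\neq i}j_\ell$ is $\leq (j_i)_*$. Once this characterization is in place, both directions of the theorem become questions about propagating a pairwise condition to a global one.

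For the ``semidistributive implies flag'' direction, let $A=\{j_1,\dots,j_k\}$ be pairwise canonical and argue by induction on $k$ that $A$ is itself canonical for $w=\bigvee A$. Pairwise canonicity yields $j_i\not\leq j_\ell$ for $i\neq\ell$, and the ``join-primeness'' of canonical joinands (a standard consequence of join-semidistributivity) then upgrades this to irredundancy of $A$ as a whole. For the meet condition, pairwise canonicity gives $j_i\wedge j_\ell\leq (j_i)_*$ for every $\ell\neq i$. This is precisely the hypothesis of the meet-semidistributive law, which immediately yields $j_i\wedge\bigvee_{\ell\neq i}j_\ell\leq (j_i)_*$, verifying the characterization.

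For the converse, I would argue the contrapositive: assume $L$ is join-semidistributive but fails to be meet-semidistributive, and produce a non-face of the canonical join complex all of whose proper subsets are faces. A failure of meet-semidistributivity gives $x,y,z$ with $x\wedge y=x\wedge z<x\wedge(y\vee z)$. The strategy is to extract from this data a triple of join-irreducibles that is pairwise canonical while violating the meet characterization above; this gives an explicit witness that flagness fails. The natural approach is to pass to a minimal counterexample and read off join-irreducibles that ``cover'' the discrepancy between $x\wedge y$ and $x\wedge(y\vee z)$.

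The main obstacle I anticipate is the forward direction. The backward direction is a clean application of meet-semidistributivity once the right characterization is isolated, but converting a failure of meet-semidistributivity into a concrete non-face is delicate: one must ensure that the constructed triple is \emph{genuinely} pairwise canonical and not merely a triple of join-irreducibles whose pairwise joins happen to equal their total join in some redundant way. I expect the cleanest route will be to select a minimal such $x$ and to use the minimality (together with join-semidistributivity) to force the required pairwise canonicity of the extracted join-irreducibles.
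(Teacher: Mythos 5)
Your proof rests on a characterization that is false, and the failure is structural rather than cosmetic. You propose that $\{j_1,\dots,j_k\}$ joins canonically exactly when the join is irredundant and $j_i\meet\Join_{\ell\neq i}j_\ell\le (j_i)_*$ for each $i$. But the meet condition is \emph{automatic} for any irredundant antichain of join-irreducibles: any element strictly below a join-irreducible $j$ lies below its unique lower cover $j_*$, and irredundancy forces $j_i\meet\Join_{\ell\neq i}j_\ell<j_i$. So your criterion collapses to ``irredundant,'' which does not characterize canonical join representations: in the pentagon $N_5$ (chain $\0<b<c<\1$ with an atom $a$ on the other side), $\{a,c\}$ is an irredundant antichain of join-irreducibles satisfying your meet condition, yet $\can(\1)=\{a,b\}$, not $\{a,c\}$. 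The correct local criterion, which the paper isolates in Lemma~\ref{lem: kappa canonical}, is a \emph{join} condition on the lower cover: $j$ is a canonical joinand of $\Join F\join j$ if and only if $\Join F\join j>\Join F\join j_*$. Separately, your application of \ref{msd} in the forward direction is invalid: \ref{msd} requires \emph{equal} meets $x\meet y=x\meet z$, and the inequality version you invoke (common upper bound $(j_i)_*$ for the pairwise meets implies the same bound for the meet with the join) fails already in the semidistributive lattice $N_5$, where $c\meet a=\0\le b$ and $c\meet b=b\le b$ but $c\meet(a\join b)=c\not\le b$.

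This leaves the genuinely hard step of the forward direction --- irredundancy of a pairwise-canonical set --- unaddressed, and your claim that it follows from join-semidistributivity alone (via ``join-primeness'' of canonical joinands) is refuted by the paper's own key example: in the join-semidistributive lattice of Figure~\ref{fig:join_semi}, the atoms $a,b,c$ are pairwise canonical yet $\Join\{a,b,c\}$ is redundant, and $b\le a\join c$ with $b\not\le a$ and $b\not\le c$ shows canonical joinands need not be join-prime. Irredundancy is precisely where meet-semidistributivity must enter; the paper routes it through Proposition~\ref{kappa} (existence of $\kappa(j)$) and Lemma~\ref{simplex_incomparable}, where comparability of the joins $\Join(F\setminus\{j\})$ would force $\Join\K(j)=\1$ and hence the nonexistence of $\kappa(j)$. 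Your converse sketch inherits the broken criterion and underestimates the delicacy of the construction: a failure of \ref{msd} is converted into a non-face not by inspecting a single bad triple $x,y,z$, but by fixing $j$ with $\kappa(j)$ nonexistent and choosing an antichain $A$ minimal in join-refinement among those with $\Join A\join j=\Join A\join j_*$ inside the set $X$ of $j'$ joining canonically with $j$; the chain of Lemmas~\ref{lem: kappa} through~\ref{edges} is what certifies that every pair in $A\cup\{j\}$ is a face while $A\cup\{j\}$ is not, and this witness need not have size three. Your minimality instinct points in the right direction, but the minimality must be taken in join-refinement over antichains compatible with $j$, not over triples witnessing the \ref{msd} failure.
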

In other words, if each element in $L$ admits a canonical join representation, then the canonical join complex for $L$ is flag if and only if each element also admits a canonical meet representation.
In light of Theorem~\ref{flag}, we define the \newword{canonical join graph} for $L$ to be the one-skeleton of its canonical join complex.
Canonical join representations and the canonical join graph appear in many familiar guises.
See Section~\ref{Examples} for connections to comparability graphs and noncrossing partitions.

It is not hard to find examples of finite join-semidistributive lattices whose canonical join complex is not flag.
A key example is shown below in Figure~\ref{fig:join_semi}.
\begin{figure}[h]
  \centering
   \scalebox{1}{ \includegraphics{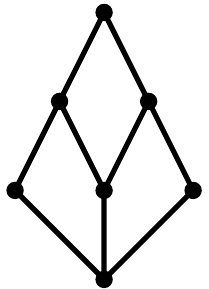} \qquad\qquad
    \includegraphics{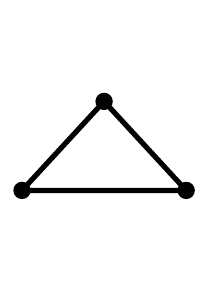}
  \begin{picture}(0,50)(0,50)
   \put(-105,85){$c$}
   \put(-138,94){$b$}
   \put(-173.5,85){$a$}
   \put(-163,106){$d$}
   \put(-115,106){$e$}
   \put(-8,85){$c$}
   \put(-32.4,110){$b$}
   \put(-64,85){$a$}
   \end{picture}}
     \caption{The canonical join complex is an empty triangle.}
        \label{fig:join_semi}
\end{figure}
Observe that each pair of atoms in the lattice in Figure~\ref{fig:join_semi} is a face in the canonical join complex.
Since the join of all three atoms is redundant (because we can remove $b$ and obtain the same join), the canonical complex is an empty triangle.
Note that the bottom element $\0$ of this lattice does not have a canonical meet representation:
Both $a\meet e$ and $c\meet d$ are minimal, highest meet-representations for $\0$.
We will see below that the combinatorics of the canonical join complex (and canonical meet complex) are closely related to the topology of the its lattice.

Recall that the \newword{crosscut complex} of $L$ is the abstract simplicial complex whose faces are the subsets $A'$ of atoms in $L$ such that $\Join A' < \1$.
A lattice is \newword{crosscut-simplicial} if for each interval $[x,y]$ the join of each proper subset of atoms in $[x,y]$ is strictly less than $y$.
Recall that the order complex of a finite poset $\poset$ is homotopy equivalent to its crosscut complex (\cite[Theorem~10.8]{Btop}).
Therefore, if $L$ is crosscut-simplicial then each interval $[x,y]$ in $L$ is either contractible or homotopy equivalent to a sphere with dimension two less than the number of atoms in $[x,y]$ (see also \cite[Theorem~3.7]{SB-labeling}).
In particular, $\mu(x,y) \in \{-1,0,1\}$.

Observe that the facets of the crosscut complex for the lattice $L$ in Figure~\ref{fig:join_semi} are $\{a,b\}$ and $\{b,c\}$.
Therefore, $L$ is not crosscut-simplicial.
By contrast, Hersh and M\'{e}sz\'{a}ros recently showed that a large class of finite semidistributive lattices---including the class of finite distributive lattices, the weak order on a finite Coxeter group, and the Tamari lattice (\cite[Theorems~5.1, 5.3 and 5.5]{SB-labeling})---are crosscut-simplicial.
Building on this work, McConville proved that if $L$ is semidistributive, then it is crosscut-simplicial (\cite[Theorem~3.1]{crosscut}).
When each element in $L$ has a canonical join representation, we prove that the converse is true. 

\begin{theorem}\label{crosscut converse}
Suppose that $L$ is a finite join-semidistributive lattice.
The following are equivalent:
\begin{enumerate}
\item The canonical join complex for $L$ is flag.
\item $L$ is crosscut-simplicial.
\item $L$ is semidistributive.
\end{enumerate}
\end{theorem}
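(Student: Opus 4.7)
The plan is to deduce Theorem~\ref{crosscut converse} from Theorem~\ref{flag} together with the result of McConville. The equivalence (1)~$\Leftrightarrow$~(3) is exactly Theorem~\ref{flag}, while (3)~$\Rightarrow$~(2) is \cite{crosscut}, as noted just above the statement. It therefore suffices to establish (2)~$\Rightarrow$~(1) in order to close the cycle.

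I would prove (2)~$\Rightarrow$~(1) by contrapositive. Suppose the canonical join complex of $L$ is not flag, and pick a minimal non-face $A = \{a_1,\ldots,a_k\}$ with $k \geq 3$. By minimality every proper subset of $A$ is a canonical join representation, so the $a_i$ are pairwise incomparable join-irreducibles and any two of them join irredundantly and canonically. Let $w = \bigvee A$. Since $A$ itself is not a canonical join representation of $w$, either $A$ is redundant at $w$, or $A$ is irredundant but strictly higher than the true canonical join representation of $w$ in the order-ideal sense.

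The goal is then to locate an interval $[x,w]$ of $L$ whose atoms include canonical-joinand surrogates for $a_1,\ldots,a_k$ and in which a proper subset of those atoms already joins to $w$. The key tool is the standard bijection, valid in any finite join-semidistributive lattice, between the canonical joinands of an element and its lower covers, implemented by a $j$-labeling of the Hasse diagram. Passing to a suitably chosen interval $[x,w]$ turns canonical joinands into atoms, and the hypothesis that every proper subset of $A$ is a canonical join representation translates into a proper subset of those atoms whose join is $w$, directly contradicting crosscut-simpliciality.

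The main obstacle is pinning down the correct basepoint $x$ and verifying that the atoms of $[x,w]$ behave as expected. I would split into subcases according to whether $A$ fails irredundancy or fails the ``lowest'' condition; in each case, the join-semidistributive identity supplies the cancellation needed to isolate the $a_i$'s as distinct atoms of a single interval. The resulting configuration locally matches the empty-triangle example of Figure~\ref{fig:join_semi}, whose failure of crosscut-simpliciality is immediate, completing the contrapositive.
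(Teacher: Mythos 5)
Your reduction is set up correctly: with Theorem~\ref{flag} giving (1)~$\Leftrightarrow$~(3) and McConville's result giving (3)~$\Rightarrow$~(2), the cycle closes once (2)~$\Rightarrow$~(1) (equivalently, (2)~$\Rightarrow$~(3)) is established. But the step you propose for this implication is precisely where the real work lies, and your sketch does not supply it. The central assertion --- that a minimal non-face $A$ yields an interval $[x,w]$ whose atoms are ``canonical-joinand surrogates'' for the $a_i$, with a proper subset of those atoms joining to $w$ --- is not justified and does not hold in any straightforward way. Canonical joinands of $w$ are in bijection with the \emph{lower covers} of $w$ (Lemma~\ref{canonical cover}), not with the atoms of any interval $[x,w]$, and there is no evident basepoint $x$ for which the elements of $A$, which may sit at very different depths in $L$, become atoms of $[x,w]$. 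The underlying obstruction is that the failure of~\ref{msd} in a finite join-semidistributive lattice is detected by a \emph{sublattice} copy of the lattice in Figure~\ref{fig:join_semi} (\cite[Theorem~5.56]{free lattice}), and sublattices do not preserve cover relations, whereas crosscut-simpliciality is a statement about atoms of intervals, i.e., about covers. Converting the order-theoretic failure into a genuine covering configuration is the crux of the whole implication, and your appeal to ``the join-semidistributive identity supplies the cancellation needed'' is a placeholder for an argument you have not given.

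The paper fills exactly this hole with Lemma~\ref{crosscut lemma}: if $L$ is join-semidistributive but not meet-semidistributive, then there exist $x$, $y$, $z$ covering a common element $a$ with $y\join z>x$, so that the interval $[a,y\join z]$ has at least three atoms while the proper subset $\{y,z\}$ of its atoms already joins to the top --- the desired failure of crosscut-simpliciality. Its proof is a delicate induction on $|L|$ resting on the covering criterion for meet-semidistributivity (Lemma~\ref{BEZ}): one chooses a triple $\{x,y,z\}$ failing~\ref{msd} with $y,z$ covering a common element and minimal in join-refinement, proves $x<y\join z$, proves $x\join y$ and $x\join z$ are incomparable, and finally replaces $x$ by a suitable $w'\in\covup(a)$. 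Note also that the paper proves (2)~$\Rightarrow$~(3) and then invokes Theorem~\ref{flag}, rather than aiming at (2)~$\Rightarrow$~(1) directly; since by Theorem~\ref{flag} a non-flag canonical join complex already forces the nonexistence of some $\kappa(j)$ (Proposition~\ref{kappa}), hence the failure of~\ref{msd}, your contrapositive would in any case reduce to the same statement as Lemma~\ref{crosscut lemma}. Without that lemma, or an equivalent mechanism for producing the covering triple, your proof is incomplete.
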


As an immediate corollary, we obtain the following topological obstruction to the flag-property of the canonical join complex.
\begin{corollary}\label{top}
Suppose that $L$ is a finite join-semidistributive lattice and its canonical join complex is flag.
Then:
\begin{enumerate}
\item Each interval $[x,y]$ in $L$ is either contractible or homotopy equivalent to $\mathbb{S}^{d-2}$, where $d$ is the number of atoms in $[x,y]$;
\item The M\"obius function takes only the values $\{-1,0,1\}$ on the intervals of $L$.
\end{enumerate}
\end{corollary}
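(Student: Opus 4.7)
The plan is to deduce this directly from Theorem~\ref{crosscut converse} together with the topological machinery for crosscut-simplicial lattices already summarized in the introduction. The corollary is labeled ``immediate,'' and I expect no substantive new combinatorial argument to be needed---the whole proof is an assembly of existing citations, so the main step is just recognizing that the topological conclusions are already built into crosscut-simpliciality.

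First, I would apply Theorem~\ref{crosscut converse} to the hypothesis that the canonical join complex of $L$ is flag, obtaining that $L$ is crosscut-simplicial. By definition this means that in every interval $[x,y]$ of $L$, the join of each proper subset of the atoms of $[x,y]$ is strictly less than $y$.

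Next, I would fix an interval $[x,y]$ with $d$ atoms $a_1,\ldots,a_d$ and examine its crosscut complex. By crosscut-simpliciality, every proper subset of $\{a_1,\ldots,a_d\}$ is a face, so the crosscut complex of $[x,y]$ is determined by the single value $v = \Join_{i=1}^{d} a_i$. If $v=y$, the complex is precisely the boundary of the $(d-1)$-simplex, hence homeomorphic to $\mathbb{S}^{d-2}$; if $v<y$, the complex is the full $(d-1)$-simplex, hence contractible. Björner's crosscut theorem \cite[Theorem~10.8]{Btop}, recalled in the introduction, then identifies the homotopy type of the order complex of the open interval $(x,y)$ with that of the crosscut complex of $[x,y]$, which establishes part (1).

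For part (2), the Möbius value $\mu(x,y)$ is the reduced Euler characteristic of the order complex of $(x,y)$. In the contractible case this is $0$, and in the spherical case it is $(-1)^{d-2} \in \{-1,1\}$, giving $\mu(x,y) \in \{-1,0,1\}$ on every interval of $L$. Since Theorem~\ref{crosscut converse} does all of the combinatorial work, there is no real obstacle; the only subtlety is reading ``interval'' in the statement of the corollary as the order complex of the open interval rather than the closed lattice interval, which is standard.
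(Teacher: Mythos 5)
Your proposal is correct and is exactly the argument the paper intends: the paper states the corollary as an immediate consequence of Theorem~\ref{crosscut converse}, with the topological content (crosscut complex equal to a simplex or its boundary, Bj\"orner's crosscut theorem identifying the homotopy type of the open interval, and the M\"obius function as reduced Euler characteristic) already laid out in the introduction. Your assembly of these citations, including the observation that $\mu(x,y)\in\{0,(-1)^{d-2}\}$ in the two cases, matches the paper's reasoning step for step.
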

McConville showed in \cite[Corollary~5.4]{crosscut} that if $L$ is crosscut-simplicial then so is each of its lattice quotients.
Because semidistributivity is preserved under taking sublattices and quotients \textit{when $L$ is finite} (see Section~\ref{subs}), we immediately obtain the following extension of McConville's result for finite join-semidistributive lattices.
\begin{corollary}
Suppose that $L$ is a finite join-semidistributive lattice that is crosscut-simplicial.
Then each sublattice and quotient lattice of $L$ is also crosscut-simplicial.
\end{corollary}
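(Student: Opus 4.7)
The plan is to sandwich the statement between two applications of Theorem~\ref{crosscut converse}, using the preservation of semidistributivity (in the finite case) under sublattices and quotients as the bridge. In outline: crosscut-simplicial $\Rightarrow$ semidistributive (by Theorem~\ref{crosscut converse}) $\Rightarrow$ semidistributive on sublattices and quotients (by the general preservation result) $\Rightarrow$ crosscut-simplicial on sublattices and quotients (again by Theorem~\ref{crosscut converse}).

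Concretely, I would first apply the implication $(2)\Rightarrow(3)$ of Theorem~\ref{crosscut converse} to the given lattice $L$ to upgrade the hypothesis and conclude that $L$ is in fact semidistributive. Next, I would invoke the fact recalled in Section~\ref{subs} that, for finite lattices, semidistributivity is inherited by both sublattices and lattice quotients; applied to $L$, this shows that every sublattice $L'$ of $L$ and every lattice quotient of $L$ is again a finite semidistributive lattice, hence in particular finite and join-semidistributive. Finally, I would apply the implication $(3)\Rightarrow(2)$ of Theorem~\ref{crosscut converse} separately to each such $L'$ and to each such quotient to deduce that it is crosscut-simplicial, which is the desired conclusion.

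The main subtlety (rather than a genuine obstacle) is that the preservation of semidistributivity under sublattices relies crucially on finiteness, as the author flags with the emphasis \emph{when $L$ is finite}; without this the argument would break at the middle step. Apart from that, the proof is a purely formal concatenation of previously established facts: the quotient half recovers McConville's \cite[Corollary~5.4]{crosscut}, while the sublattice half is the new content that becomes available once Theorem~\ref{crosscut converse} lets us convert the combinatorial hypothesis (crosscut-simplicial) into the algebraic hypothesis (semidistributive) that is known to behave well under sublattices.
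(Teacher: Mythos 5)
Your proof is correct and is essentially the paper's own argument: the paper likewise deduces semidistributivity of $L$ from Theorem~\ref{crosscut converse}, invokes the preservation of semidistributivity under sublattices and quotients in the finite case (Section~\ref{subs}), and then converts back to the crosscut-simplicial property. You have also correctly identified the role of finiteness and that the sublattice half is the genuinely new content beyond McConville's quotient result.
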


Theorem~\ref{flag} is surprising in part because its proof does not explicitly use the canonical meet representation of the elements in $L$.
Instead, we make use of local characterization of canonical join representations in terms of the cover relations, and a bijection $\kappa$ from the join-irreducible to the meet-irreducible elements in $L$.
As an easy consequence of this approach, we obtain the following nice result:

\begin{corollary}\label{iso}
Suppose that $L$ is a finite semidistributive lattice.
Then the bijection $\kappa$ induces an isomorphism from canonical join complex to the canonical meet complex of $L$. 
\end{corollary}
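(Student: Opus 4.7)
The plan is to reduce the problem to the edge level by exploiting flagness on both sides, and then to verify the edge-level statement directly from the local characterization of canonical join representations. By Theorem~\ref{crosscut converse}, the canonical join complex of $L$ is flag. Since $L$ is finite and semidistributive, its opposite lattice $L^{op}$ is also finite and semidistributive, and the canonical meet complex of $L$ coincides with the canonical join complex of $L^{op}$. A second application of Theorem~\ref{crosscut converse} to $L^{op}$ shows that the canonical meet complex of $L$ is also flag. Because a bijection on the vertex sets of two flag simplicial complexes automatically extends to a simplicial isomorphism as soon as it restricts to a bijection on the edge sets, the task reduces to showing that $\kappa$ carries edges of the canonical join complex bijectively to edges of the canonical meet complex.

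Since $\kappa$ is by construction a bijection $\Irr(L) \to \operatorname{MIrr}(L)$, the vertex correspondence is already in hand, so the content is the following: for incomparable join-irreducibles $j_1, j_2$, the set $\{j_1, j_2\}$ is a canonical join representation (necessarily of $j_1 \join j_2$) if and only if $\{\kappa(j_1), \kappa(j_2)\}$ is a canonical meet representation (necessarily of $\kappa(j_1) \meet \kappa(j_2)$). To prove this, I would invoke the local characterization of canonical join representations in terms of cover relations that, according to the introduction, is the technical engine behind Theorem~\ref{flag}. Such a characterization labels each cover $x \covered y$ by a pair $(j, \kappa(j))$ with $j$ the unique join-irreducible having $j \leq y$ and $j \not\leq x$. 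The canonical join representation of any element $w$ is then read off from the labels of covers into $w$ from below, while the canonical meet representation of $w$ is read off from the $\kappa$-images of the labels of covers out of $w$ above. Once this machinery is available, the edge-level equivalence is a direct translation.

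The main obstacle is making the edge-level translation explicit: one must show that the two cover relations beneath $j_1 \join j_2$ that certify $\{j_1, j_2\}$ as a canonical join representation correspond, under the $\kappa$-labeling scheme, to the two cover relations above $\kappa(j_1) \meet \kappa(j_2)$ that certify $\{\kappa(j_1), \kappa(j_2)\}$ as a canonical meet representation. This is precisely where finite semidistributivity is essential, since it guarantees both that $\kappa$ is defined everywhere and that its inverse behaves symmetrically. After this bookkeeping is complete, the edge-level statement holds and, combined with flagness of both complexes, yields the desired simplicial isomorphism; the brevity promised by ``easy consequence'' in the paper seems consistent with the cover-relation machinery doing essentially all of the work already.
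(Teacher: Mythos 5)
Your global architecture is exactly the paper's: flagness of the canonical join complex (Theorem~\ref{flag}) and of the canonical meet complex (its dual, Corollary~\ref{cor: meet}, which is your passage to $L^{op}$) reduces the claim to showing that $\kappa$ restricts to a bijection on edges. But at the edge level you have a genuine gap: the statement that actually carries the proof---if $\{j_1,j_2\}$ joins canonically then $\{\kappa(j_1),\kappa(j_2)\}$ meets canonically---is asserted as a ``direct translation'' of a labeling scheme, and you explicitly flag the translation as the ``main obstacle'' without carrying it out. It is not bookkeeping, because the two representations are representations of \emph{different} elements: the canonical joinands of $w=j_1\join j_2$ are read from $\covdown(w)$, while the canonical meetands in question are those of $v=\kappa(j_1)\meet\kappa(j_2)$, read from the covers above $v$, and nothing in your sketch connects the covers below $w$ to the covers above $v$. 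The paper's proof supplies precisely this missing argument: by Lemma~\ref{canonical cover} there is $y\in\covdown(j_1\join j_2)$ with $j_1\le y\le \kappa(j_2)$; if one assumes $m_1\meet m_2=(m_1)_*\meet m_2$ (where $m_i=\kappa(j_i)$), then since $j_1\le (m_1)_*$ (maximality of $\kappa(j_1)$ in $\K(j_1)$ forces everything strictly above $m_1$ to lie above $j_1$) and $j_1\le m_2$, one gets $j_1\le m_1\meet m_2\le m_1$, contradicting $m_1\in\K(j_1)$; hence $(m_1)_*\meet m_2>m_1\meet m_2$ and the dual of Lemma~\ref{lem: kappa canonical} makes $m_1$ (and by symmetry $m_2$) a canonical meetand of $v$.

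Two further problems with the machinery you invoke. First, your labeling claim misstates the relevant uniqueness: for a cover $x\covered y$ there is in general no unique join-irreducible $j$ with $j\le y$ and $j\not\le x$ (already in $N_5$, the cover $a\covered \1$ has two such elements); what is unique is the \emph{minimal} element joining with $x$ to give $y$ (Proposition~\ref{canonical joinands}). Second, the pairing fact you lean on---that each cover is labeled by a pair $(j,\kappa(j))$, with the canonical meet representation of an element read off as the $\kappa$-images of the labels on its upper covers---is nowhere established in the paper and is itself of essentially the same difficulty as the edge statement you need; citing it does not discharge the obligation. To repair the proof you should replace the labeling appeal by the explicit contradiction argument above (or prove the pairing lemma from Lemma~\ref{canonical cover} and the fact that $\kappa$ is a bijection with inverse $\kappa_*$), and then note, as the paper does, that the dual argument with $\kappa_*$ gives surjectivity on edges.
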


Using the isomorphism from Corollary~\ref{iso}, one obtains an operation on the canonical join complex that generalizes the operation of rowmotion (on the set of antichains in a poset) and the operation of Kreweras complementation (on the set of noncrossing partitions).
See Remark~\ref{rowmotion}.

The canonical join complex was first introduced in \cite{arcs}, in which Reading showed that it is flag for the special case of the weak order on the symmetric group (see Example~\ref{coxeter}).
Recently, canonical join representations have played a role in the study of functorially finite torsion classes for the preprojective algebra of Dynkin-type $W$, when $W$ is a simply laced Weyl group (see for example \cite{G-M, IRRT}).
In the forthcoming \cite{B-C-Z}, the authors study the canonical join complex for \textit{any} finite dimensional associative algebra $\Lambda$ of finite representation type.
Since the weak order on any finite Coxeter group $W$ and the lattice of torsion classes for $\Lambda$ of finite representation type are both examples of finite semidistributive lattices (see \cite[Lemma~9]{weak order} and \cite[Theorem~4.5]{G-M}), we obtain the following two applications of Theorem~\ref{flag}:

\begin{corollary}
Suppose that $W$ is a finite Coxeter group.
Then the canonical join complex for the weak order on $W$ is flag.
\end{corollary}


\begin{corollary}
Suppose that $\Lambda$ is an associative algebra of finite representation type, and $\tors(\Lambda)$ is its lattice of torsion classes ordered by containment.
Then the canonical join complex for $\tors(\Lambda)$ is flag.
\end{corollary}

\section{Motivation and Examples}\label{Examples}
Before we give the technical background for our main results, we describe several familiar examples in which the combinatorics of canonical join representations appear.
We begin with an example from number theory and commutative algebra.
\begin{example}[The divisibility poset]\label{divisibility poset}
\normalfont 
It is often useful to give a canonical factorization of the elements in a set of equipped with some algebraic structure.
A familiar example from number theory and commutative algebra is the primary decomposition of ideals.
The canonical join representation is the natural lattice-theoretic analogue. 
Indeed, when $L$ is the the divisibility poset (whose elements are the positive integers ordered $r\le s$ if and only if $r|s$), the canonical join representation of $x\in L$ coincides with the primary decomposition of the ideal generated by $x$:
 \[x=\Join \{p^d: \text{$p$ is prime and $p^d$ is the largest power of $p$ dividing $x$}\}.\]
 \end{example}
 
Suppose that $L$ is a finite lattice, such that each element in $L$ admits a canonical join representation. 
One pleasant property of the canonical join representation (and its dual, the canonical meet representation) is that it ``sees'' the geometry the Hasse diagram for $L$.
Suppose that $w\in L$ has the canonical join representation $\Join A$.
We will shortly prove that the factors that appear in $A$ are naturally in bijection with the elements covered by $w$.
So, the down-degree of $w$ is equal to the size of $A$.
Specifically, we will prove the following proposition (see Lemma~\ref{canonical cover} and Proposition~\ref{canonical joinands}):
\begin{proposition}\label{canonical joinands first time}
Suppose that $\Join A=w$ is a face in the canonical join complex for $L$.
Then, for each element $y$ that is covered by $w$ there is a corresponding element $j\in A$ such that $j\join y=w$, and $j$ is the unique minimal element in $L$ with this property.
The correspondence $y\mapsto j$ is a bijection.
\end{proposition}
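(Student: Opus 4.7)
The plan is to leverage both the lowness property defining the canonical join representation and the join-semidistributive property of $L$ to construct the bijection in three stages.

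First, I would show that each $j \in A$ is join-irreducible with a unique lower cover $j_*$. If instead $j = a \vee b$ with $a, b < j$, then pruning $(A \setminus \{j\}) \cup \{a,b\}$ to an irredundant representation $B'$ of $w$ yields a set whose order ideal does not contain $j$ (using that $A$ is an antichain, which follows from irredundancy, together with $a, b < j$), contradicting the lowness of the order ideal of $A$.

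Next, fix $y \covered w$ and define $\phi(y)$. The set $A_{\not\le y} := \{j \in A : j \not\le y\}$ is nonempty (since $\Join A = w \not\le y$), and each such $j$ satisfies $j \vee y = w$ by the covering relation. If two distinct $j_1, j_2 \in A_{\not\le y}$ existed, then $\{y, j_1\}$ would be an irredundant representation of $w$ whose order ideal excludes $j_2$, contradicting canonicity; so $\phi(y)$ is the unique element of $A_{\not\le y}$. An analogous canonicity argument shows $j_* \le y$: otherwise $\{y, j_*\}$ would be an irredundant representation with order ideal excluding $j$. For the minimality of $j$ in $L$: if $j' \vee y = w$ for some $j' \in L$, join-semidistributivity applied to $j \vee y = j' \vee y$ gives $(j \wedge j') \vee y = w$; combined with $j \wedge j' \le j_* \le y$ whenever $j' \not\ge j$, this forces $j' \ge j$.

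Finally, $\phi$ is a bijection. For injectivity: if $\phi(y_1) = \phi(y_2) = j$ with $y_1 \ne y_2$, then $y_1 \vee y_2 = w$ (since distinct lower covers of $w$ have join equal to $w$), so $\{y_1, y_2\}$ is an irredundant representation of $w$ whose order ideal excludes $j$, again contradicting canonicity. For surjectivity: given $j \in A$, the set $T_j := \{y : \Join(A \setminus \{j\}) \le y < w,\ j \not\le y\}$ is nonempty, since it contains $\Join(A \setminus \{j\})$ by irredundancy. Any maximal element $y$ of $T_j$ is covered by $w$: for any $y'$ with $y < y' \le w$, maximality forces $y' \notin T_j$, hence $y' \ge j$, so $y' \ge j \vee \Join(A \setminus \{j\}) = w$. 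Such a maximal $y$ satisfies $A_{\not\le y} = \{j\}$, giving $\phi(y) = j$. The main subtlety throughout is orchestrating the canonicity arguments, with the competitor representations $\{y, j_1\}$, $\{y, j_*\}$, and $\{y_1, y_2\}$ each carefully chosen so that a specific element of $A$ is excluded from its order ideal.
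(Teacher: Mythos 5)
Your proof is correct and takes essentially the same route as the paper's (Lemma~\ref{canonical cover} together with Proposition~\ref{canonical joinands}): you define the correspondence via the unique canonical joinand not below $y$, and you obtain uniqueness, the fact that $j_*\le y$, injectivity, and surjectivity from the join-refinement minimality of $\can(w)$ against exactly the competitor representations $\{y,j_1\}$, $\{y,j_*\}$, and $\{y_1,y_2\}$ that the paper uses. The only (harmless) local variation is in the minimality of $j$: you invoke $SD_\join$ together with $j\meet j' \le j_* \le y$, whereas the paper simply notes that $\can(w)$ join-refines $\{x,y\}$ and $\eta(y)\not\le y$, forcing $\eta(y)\le x$.
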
  
With this proposition in mind, we consider the class of finite distributive lattices.
\begin{example}[Finite distributive lattices]\label{distributive lattices}
\normalfont
Suppose that $L$ is a finite distributive lattice.
Recall that the fundamental theorem of finite distributive lattices (see for example \cite[Theorem~3.4.1]{EC1}) says that $L$ is the lattice $J(\poset)$ of order ideals of some finite poset $\poset$.
Suppose that $A$ is an antichain in $\poset$.
We write $I_A$ for the order ideal generated by $A$ (that is, the elements of $A$ are the maximal elements of $I_A$).
Dually, we write $I^A$ for the order ideal satisfying: $A$ is the set of minimal elements in $\poset\setminus I^A$.
Observe that the order ideals covered by $I_A$ are exactly of the form $I_{A\setminus \{y\}}=I_A\setminus \{y\}$, where $y\in A$.
Since $I_y$ is the smallest order ideal in $J(\poset)$ containing $y$, it follows immediately from Proposition~\ref{canonical joinands first time} that the canonical join representation of $I_A$ is $\bigcup \{I_y: y\in A\}$.
(Dually, the canonical meet representation for the ideal $I^A$ is $\bigcap \{I^y: y\in A\}$.)
It follows that the canonical join graph of $J(\poset)$ is the incomparability graph of $\poset$.

Comparability graphs were classified by a theorem of Gallai which we quote from \cite[Theorem~2.1]{trotter} below.
\begin{theorem}\label{comparability_graph}
A graph $G$ is a comparability graph for a finite poset if and only if it does not contain as an induced subgraph any graph from \cite[Table 1]{trotter} or the complement of any graph appearing in \cite[Table 2]{trotter}.
\end{theorem}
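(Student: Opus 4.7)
This is Gallai's classical characterization of comparability graphs, which the paper quotes rather than proves; any sketch I produce is essentially a recapitulation of Gallai's intricate original argument. The plan is to use the theory of forcing (or implication) classes on edges, which remains the standard framework.

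First, I would introduce the forcing relation $\Gamma$ on oriented edges of $G$: declare $(a,b)\,\Gamma\,(c,b)$ whenever $ac\notin E(G)$, and $(a,b)\,\Gamma\,(a,c)$ whenever $bc\notin E(G)$. An orientation of $G$ is transitive precisely when it is closed under $\Gamma$. The symmetric–transitive closure of $\Gamma$ partitions $E(G)$ into \emph{implication classes} (color classes), and $G$ is a comparability graph if and only if each implication class admits a consistent orientation, i.e.\ no class contains both orientations of the same edge. This is the local half of the characterization and is relatively elementary.

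Second, I would analyze the obstruction: suppose some implication class $C$ is inconsistent, so that a forcing chain $e_0, e_1, \ldots, e_k = \overline{e_0}$ exists. The vertices touched by such a chain induce a subgraph whose edges lie in a single color class with a contradiction built in. The combinatorial core of Gallai's argument is to enumerate the minimal such configurations, which yields the list in Table~1. The separate appearance of complements of graphs from Table~2 reflects a genuinely different obstruction: even when every implication class of $G$ is internally consistent, the resulting transitive orientation must be acyclic in the usual sense, and cyclic obstructions are naturally detected on the complement.

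Third, I would show that a graph avoiding both forbidden lists really is a comparability graph by constructively orienting the implication classes and verifying consistency. The hard part, and the reason Gallai's proof is famously long, is the enumeration of minimal obstructions: the list is infinite (containing, for example, all odd induced cycles of length at least $5$ together with their forcing-theoretic relatives), and completeness requires delicate structural case analysis relating abstract forcing cycles to concrete induced subgraph patterns. A modern treatment via modular decomposition streamlines parts of the bookkeeping but does not eliminate this combinatorial core, and any self-contained proof proposal must concede that the enumeration step itself is the main obstacle.
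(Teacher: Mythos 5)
The paper does not prove this statement at all: it is quoted verbatim from Trotter (\cite[Theorem~2.1]{trotter}), originally due to Gallai, and is used as a black box to deduce Proposition~\ref{canonical_graph}. So there is no proof in the paper to compare yours against, and your sketch must stand on its own. It does not, for the reason you yourself concede: the entire content of the theorem is the \emph{completeness} of the forbidden list, and your third step defers precisely that enumeration --- the case analysis showing that every minimal inconsistent configuration of the forcing relation induces a graph from Table~1 or the complement of a graph from Table~2 --- to ``delicate structural case analysis'' that is never carried out. The first half of your outline (the implication classes of $\Gamma$, and the fact that $G$ is a comparability graph if and only if no implication class contains both orientations of some edge) is the correct and standard framework, but a proposal that stops exactly where the hard work begins is a restatement of the problem, not a proof.

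There is also a concrete mathematical error in your second step. You assert that the complements of the Table~2 graphs reflect ``a genuinely different obstruction,'' namely that a consistently oriented family of implication classes must additionally be checked for cyclicity, with cyclic obstructions ``detected on the complement.'' This is not how the theorem works: by the Gallai--Ghouila-Houri theorem, consistency of every implication class is already \emph{sufficient} for the existence of a transitive orientation --- proving this implication (via the ordering of color classes, or in modern language modular decomposition) is the nontrivial half of the transitive orientation theorem, and no residual acyclicity condition survives it. The split into two tables is an artifact of presentation: Gallai's infinite list of minimal noncomparability graphs is most compactly written with some families given directly and others as complements of small or structured graphs, not because they witness two different kinds of obstruction. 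If you intend to use this theorem as the paper does, cite it; if you intend to prove it, you must both carry out the enumeration and repair this misreading of what the complementation in the statement signifies.
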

As an immediate corollary we have the following characterization of the canonical join graphs for finite distributive lattices.
Each finite distributive lattice is, in particular, semidistributive.
By Theorem~\ref{flag}, we obtain a complete characterization of the canonical join complexes for finite distributive lattices.
\begin{proposition}\label{canonical_graph}
The graph $G$ is the canonical join graph for a finite distributive lattice if and only if $G$ does not contain, as an induced subgraph, the complement of any graph forbidden by Theorem~\ref{comparability_graph}.
\end{proposition}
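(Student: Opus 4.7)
The plan is to reduce the proposition to Gallai's theorem (Theorem~\ref{comparability_graph}) by passing to graph complements. The paragraph immediately preceding the statement, together with the fundamental theorem of finite distributive lattices, already establishes that the canonical join graphs of finite distributive lattices are exactly the incomparability graphs of finite posets: every finite distributive lattice has the form $J(\poset)$, and the canonical join graph of $J(\poset)$ is the incomparability graph of $\poset$. So the proposition's ``canonical join graph for a finite distributive lattice'' can be replaced throughout by ``incomparability graph of a finite poset.''

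With that identification in hand, I would next invoke the elementary fact that a graph $G$ on a finite vertex set is an incomparability graph of a finite poset if and only if its complement $\bar{G}$ is a comparability graph of that same poset, since the two edge sets partition the edges of the complete graph on the shared vertex set. Consequently, $G$ is a canonical join graph for some finite distributive lattice precisely when $\bar{G}$ is a comparability graph.

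Finally, I would apply Gallai's theorem to $\bar{G}$: it is a comparability graph if and only if it contains no induced subgraph from the forbidden list of Theorem~\ref{comparability_graph}. Because the operation of taking induced subgraphs commutes with complementation, $\bar{G}$ contains an induced subgraph $H$ if and only if $G$ contains the induced subgraph $\bar{H}$. Translating the forbidden-subgraph condition on $\bar{G}$ into one on $G$ therefore yields exactly the statement of Proposition~\ref{canonical_graph}. There is really no substantive obstacle here; the only care-point is bookkeeping to verify that the class of induced subgraphs ruled out in the statement, namely complements of graphs forbidden by Theorem~\ref{comparability_graph}, matches under this translation the forbidden list for $\bar{G}$ (one of those two lists is already stated as complements, so one layer of complementation cancels).
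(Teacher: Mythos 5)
Your proposal is correct and takes essentially the same route as the paper, which presents the proposition as an immediate corollary of the identification (established in the paragraph before the statement) of canonical join graphs of finite distributive lattices with incomparability graphs of finite posets, combined with Gallai's theorem (Theorem~\ref{comparability_graph}) applied to the complement. Your bookkeeping point---that taking induced subgraphs commutes with complementation, so one layer of complementation cancels against the complements already present in Gallai's forbidden list---is exactly the verification the paper leaves implicit.
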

\end{example}

\begin{example}[The Symmetric group and noncrossing diagrams]\label{coxeter}
\normalfont
Recently Reading gave an explicit combinatorial model for the canonical join complex of the weak order on the symmetric group $S_n$ in terms of certain noncrossing diagrams.
A \newword{noncrossing diagram} is a diagram consisting of $n$ vertices arranged vertically, together with a collection of curves called arcs that must satisfy certain compatibility conditions.
In particular, the arcs in a noncrossing diagram do not intersect in their interiors (see \cite{arcs} for details).
Each diagram is determined by its combinatorial data: the endpoints of its arcs, and on which side (either left or right) each arc passes the vertices in the diagram.
For example, a we say that a diagram contains only \newword{left arcs} if it has no arc that passes to the right of any vertex (see the leftmost noncrossing diagram in Figure~\ref{fig:left arcs}).
\begin{figure}[h]
  \centering
   \scalebox{1}{ \includegraphics{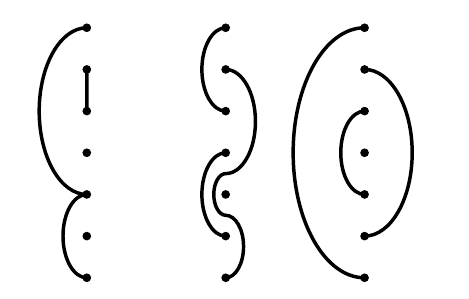}}
     \caption{Some examples of noncrossing diagrams.}
        \label{fig:left arcs}
\end{figure}

We say that two arcs are \newword{compatible} if there is a noncrossing diagram that contains them.
The following is a combination of \cite[Corollary~3.4 and Corollary~3.6]{arcs}.
(In the statement of the Theorem, we take ``a collection of arcs'' to also mean collection of noncrossing diagrams, each containing a single arc.)
\begin{theorem}\label{faces and arcs} 
There is a bijection $\delta$ from the set of join-irreducible permutations in $S_n$ to the set of noncrossing diagrams on $n$ vertices with a unique arc.
Moreover, a collection of arcs $\mathcal{E}$ corresponds to a face in the canonical join complex for $S_n$ if and only if the arcs in $\mathcal{E}$ are pairwise compatible.
\end{theorem}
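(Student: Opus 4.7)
The plan is to separate the theorem into two assertions: (i) the existence of the bijection $\delta$ between join-irreducible permutations and single-arc noncrossing diagrams, and (ii) the identification of faces of the canonical join complex with pairwise compatible collections of arcs. For (i) I would first describe join-irreducibles in $S_n$ explicitly. Under right weak order, the number of lower covers of $w$ equals the number of descents of $w$, so the join-irreducibles are exactly the permutations with a unique descent. Such a permutation is determined by (a) the descent position $k$, (b) the pair of values $a < b$ with $w(k) = b$ and $w(k+1) = a$, and (c) for each intermediate value $c$ with $a < c < b$, the choice of whether $c$ sits to the left or to the right of the descent (the remaining entries, those $\le a$ or $\ge b$, are then forced into increasing order on each side by the single-descent condition). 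This is precisely the data of an arc from vertex $a$ to vertex $b$ together with a left/right assignment at each vertex strictly between them, which is Reading's definition of a single-arc noncrossing diagram. Taking $\delta(w)$ to be this arc gives a well-defined map whose inverse is read directly off the diagram by reconstructing the one-line notation.

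For (ii) I would reduce everything to the local criterion given by Proposition~\ref{canonical joinands first time}. First, for a general $w \in S_n$ with lower covers $y_1,\ldots,y_d$, I would identify the unique minimal $j_i$ with $j_i \vee y_i = w$ as the join-irreducible whose single descent matches the descent of $w$ that is ``undone'' in passing from $w$ to $y_i$. Computing $\delta(j_i)$ explicitly then shows that the canonical joinands of $w$ produce a collection of arcs which are the arcs of a single noncrossing diagram associated with $w$; this handles the forward direction, since arcs coming from a common diagram are pairwise compatible. For the converse, given a pairwise compatible collection $\E$, I would argue in two steps: (1) since the weak order on $S_n$ is semidistributive, Theorem~\ref{flag} implies its canonical join complex is flag, so it suffices to treat the case $|\E|=2$; (2) for two join-irreducibles $j_1, j_2$ with arcs $\alpha_1, \alpha_2$, I would show that $\{j_1,j_2\}$ is a face iff $j_1 \vee j_2$ has both $j_1$ and $j_2$ as canonical joinands, and that this condition translates under $\delta$ exactly into the combinatorial condition that $\alpha_1$ and $\alpha_2$ fit into a common noncrossing diagram (no crossing, no contradictory left/right choice at a shared vertex).

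The main obstacle is step (2): pinning down, in terms of one-line notation, precisely which pairs of single-descent permutations have a canonical-join-compatible relationship, and then matching that condition with the purely diagrammatic notion of arc compatibility. A clean way to handle this is to compute $j_1 \vee j_2$ directly as the smallest permutation whose inversion set contains both inversion sets, check irredundancy (i.e., $j_1 \not\le j_2$ and vice versa), and then verify that no strictly smaller join-irreducible below $j_1$ or $j_2$ can replace one of them—an exchange argument that fails precisely when the two arcs either cross or disagree at some intermediate vertex. Once this bilateral match is established, the flag property harvested from Theorem~\ref{flag} upgrades it to arbitrary $|\E|$, and a combinatorial verification that any pairwise compatible family of arcs assembles into one noncrossing diagram completes the proof.
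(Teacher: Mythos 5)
A preliminary point of comparison: the paper itself gives no proof of Theorem~\ref{faces and arcs} --- it is quoted verbatim from Reading \cite{arcs} (Corollaries~3.4 and 3.6 there) --- so your proposal is really being measured against Reading's argument. Your architecture tracks his closely: join-irreducibles in the weak order are exactly the single-descent permutations, the descent data encodes an arc (your item (c) is right in spirit, though the forcing you should cite is that values below $a$ must lie left of the descent and values above $b$ must lie right of it --- increasing order on each side is automatic), and the canonical joinand attached to a lower cover is read off the corresponding descent via Proposition~\ref{canonical joinands first time}. Your one genuine departure is importing Theorem~\ref{flag} to reduce arbitrary $\mathcal{E}$ to the pair case; historically the implication ran the other way (Reading proved flagness for $S_n$ from the diagram model, and this paper credits him with that special case), but your inversion is logically sound, since the paper's proof of Theorem~\ref{flag} nowhere uses the $S_n$ example and semidistributivity of the weak order is independently known. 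It also spares you the assembly step: for the stated equivalence ``face iff pairwise compatible,'' flagness plus the pair case suffices, and the verification that pairwise compatible arcs fit into one common diagram is not actually needed.

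The genuine gap is in your step (2), which is also the entire technical content of the theorem. Your gloss of compatibility --- ``no crossing, no contradictory left/right choice at a shared vertex'' --- omits Reading's rule that two arcs sharing the same \emph{left} endpoint, or the same \emph{right} endpoint, are never compatible, even when they can be drawn without crossing. With only your stated failure modes, the pair-case equivalence is false. Concretely, in $S_3$ take $j_1 = 213$ (the arc $1$--$2$) and $j_2 = 312$ (the arc $1$--$3$ passing to the right of vertex $2$): these arcs do not cross and have no common interior vertex, yet $j_1 \join j_2 = 321$ has canonical join representation $213 \join 132$, so $\{213, 312\}$ is a non-face --- matching the fact that the two arcs share the left endpoint $1$ and hence are incompatible. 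Your proposed exchange argument must be set up to detect this third failure mode (here $132 \le 312$ is the join-irreducible that replaces $312$, witnessing that $312$ is not a canonical joinand of $321$). Until the full trichotomy --- interior crossing, contradictory side at a common interior vertex, shared same-side endpoint --- is matched exactly against the failure of $j_1$ and $j_2$ to both be canonical joinands of $j_1 \join j_2$ (for which Lemma~\ref{lem: kappa canonical} is the right tool), the crux of the proof remains unestablished, and as written the sketch would certify some non-faces as faces.
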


\end{example}

\begin{example}[The Tamari lattice and noncrossing partitions]\label{noncrossing}
\normalfont
We conclude our list of examples by considering the Tamari lattice.
The Tamari lattice $\T_n$ is a finite semidistributive lattice (see for example \cite[Theorem~3.5]{tamari}), which can be realized as an ordering on the set of triangulations for a fixed convex $(n+3)$-gon $P_n$.
Recall that the rank $n$ associahedron is a simple convex polytope, whose faces are in bijection with the collections of pairwise noncrossing diagonals of $P_n$ (see~\cite[Figure~3.5]{root systems}).
In particular, its vertices are parametrized by triangulations of $P_n$ in such a way that we obtain the Hasse diagram for $\T_n$ as an orientation of its 1-skeleton.
Since the number of factors in a canonical join representation (called the \newword{canonical joinands}) for $w\in \T_n$ is equal to the down-degree of $w$, we obtain the following result:
\begin{proposition}\label{h and f vectors}
The $f$-vector for the canonical join complex of the Tamari lattice $\T_n$ is equal to the the $h$-vector of the rank $n$ associahedron.
Specifically, the number of size-$k$ faces in the canonical join complex is equal to the Narayana number \[N(n,k) = \frac{1}{n+1}\binom{n+1}{k+1}\binom{n+1}{k}.\]
\end{proposition}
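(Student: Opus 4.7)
The plan is to reduce the statement to a known computation of the $h$-vector of the associahedron via Proposition~\ref{canonical joinands first time}. That proposition says that for any $w\in \T_n$, the canonical joinands of $w$ are in bijection with the elements of $\T_n$ covered by $w$; in particular, the number of canonical joinands of $w$ equals its down-degree. Hence the size-$k$ faces of the canonical join complex are in bijection with the elements of $\T_n$ of down-degree exactly $k$ (the empty face corresponds to the bottom element, which has down-degree $0$), and the proposition becomes a statement about counting elements of $\T_n$ by down-degree.

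Next I would invoke the well-known interpretation of the $h$-vector of a simple polytope in terms of a generic edge orientation: if $P$ is a simple polytope and we orient each edge of its $1$-skeleton using a generic linear functional, then $h_k(P)$ equals the number of vertices of in-degree $k$. This is a consequence of the line-shelling construction of the $h$-vector (see, e.g., Ziegler's \emph{Lectures on Polytopes}). The excerpt already recalls that the rank-$n$ associahedron is a simple polytope whose $1$-skeleton, when oriented by the Tamari order, reproduces the Hasse diagram of $\T_n$; one checks that this orientation is induced by a generic linear functional, for example via the Loday realization. Under this orientation, the in-degree of $w$ is precisely its down-degree in $\T_n$, so the number of size-$k$ faces of the canonical join complex is equal to $h_k$ of the rank-$n$ associahedron.

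Finally, I would cite the classical fact that the $h$-polynomial of the rank-$n$ associahedron is the Narayana polynomial, yielding
\[h_k \;=\; \frac{1}{n+1}\binom{n+1}{k+1}\binom{n+1}{k} \;=\; N(n,k).\]
The only step that requires a moment of care is the verification that the Tamari orientation is in fact a \emph{generic} linear orientation of the associahedron; once that is in hand, the proof is an assembly of the previous paragraph with referenced results. A small sanity check confirms the indexing: for $n=2$, the lattice $\T_2$ is the pentagon with five elements of down-degrees $(0,1,1,1,2)$, giving $f$-vector $(1,3,1)$, in agreement with $N(2,0)=1$, $N(2,1)=3$, $N(2,2)=1$.
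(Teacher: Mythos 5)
Your proposal is correct and follows essentially the same route as the paper: the paper deduces the result from the fact that the number of canonical joinands of $w\in\T_n$ equals its down-degree (Proposition~\ref{canonical joinands first time}), combined with the realization of the Hasse diagram of $\T_n$ as a linear-functional orientation of the $1$-skeleton of the simple rank-$n$ associahedron, whose $h$-vector is given by the Narayana numbers. You simply make explicit the steps the paper leaves implicit (the bijection between faces and elements counted by down-degree, and the genericity of the orienting functional, e.g.\ via the Loday realization), which is a fair and accurate filling-in rather than a different argument.
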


Indeed, the canonical join representation of $w\in \T_n$ is essentially a noncrossing partition.
It is well-known that the Tamari lattice $\T_n$ may be realized as the set of permutations avoiding the 231-pattern.
It is a fact that a permutation avoids the 231-pattern if and only if its image under the bijection $\delta$ from Theorem~\ref{faces and arcs} is a noncrossing arc diagram consisting of only left arcs.
Rotating such a diagram by a quarter-turn gives the familiar representation of a noncrossing partition as a bump diagram.
(See \cite[Example~4.5]{arcs} for details, and \cite[Theorem~2.7]{typefree} and the discussion following \cite[Proposition~8.8]{typefree} for a type-free discussion.)
\end{example}

\section{Finite semidistributive lattices}\label{foundations}
\subsection{Definitions}\label{background}
In this paper, we study only finite lattices.
We write $\0$ for the unique smallest element in $L$ and $\1$ for the unique largest element.
A \newword{join-representation} of $w$ is an expression $\Join A$ which evaluates to $w$ in $L$.
At times we will also refer to the set $A$ as a join-representation.
We write $\covdown(w)$ for the set $\{y\in L: w\covers y\}$.
Similarly, we write $\covup(w)$ for the set of upper covers of $w$.
Recall that $w$ is \newword{join-irreducible} if $w=\Join A$ implies that $w\in A$.
(In particular, the bottom element $\0$ is not join-irreducible, because it is equal to the empty join.)
Since $L$ is finite, $w$ is join-irreducible when $\covdown(w)$ has exactly one element.
\newword{Meet-irreducible} elements satisfy the dual condition.
We write $\Irr(L)$ for the set of join-irreducible elements of $L$.

A join-representation $\Join A$ of $w$ is \newword{irredundant} if $\Join A' < \Join A$ for each proper subset $A'\subset A$.
Each irredundant join-representation is an antichain in $L$.
We say that the subset $A$ of $L$ \newword{join-refines} a subset $B$ if for each element $a$ in $A$, there exists some element $b$ in $B$ such that $a\le b$.
Join-refinement defines a preorder on the subsets of $L$ that is a partial order (corresponding to the containment of order ideals) when restricted to the set of antichains in $L$.

We write $\ijr(w)$ for the set of irredundant join-representations of $w$.
The \newword{canonical join representation} of $w$ in $L$, when it exists, is the unique minimal element, in the sense of join-refinement, of $\ijr(w)$.
We write $\can(w)$ for the canonical join representation of $w$.
An element $j\in \can(w)$ is a \newword{canonical joinand} for $w$.
If  $A=\can(w)$, we say that $A$ \newword{joins canonically}.
It follows immediately from the definition that each canonical joinand of $w$ is join-irreducible.
Moreover, the canonical join representation of each join-irreducible element $j$ exists and is equal to $\{j\}$.
The canonical meet representation of $w$ (when it exists) is defined dually.

In Figure~\ref{fig:meet_semi}, we give two examples in which the canonical join representation of $\1$ does not exist.  

\begin{figure}[h]
  \centering
  \begin{picture}(-50,50)
  \put(95,51){$e$}
  \put(50,50){$a$}
  \put(123,50){$b$}
  \put(110,30){$d$}
  \put(66,30){$c$}
  \end{picture}
   \scalebox{1}{\scalebox{1}{ \includegraphics{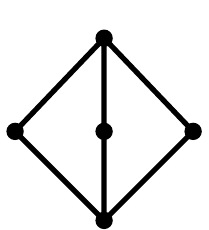}\qquad \qquad} \includegraphics{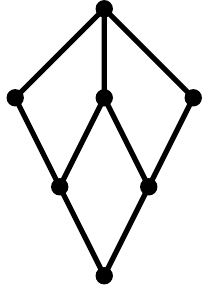}   }
   \vspace{5pt}
     \caption{Two finite lattices whose top elements have no canonical join representation.}
        \label{fig:meet_semi}
\end{figure}

In the modular lattice on the left each pair of atoms is a lowest-possible, irredundant join-representation for the top element. 
Since there is no \textit{unique} such join-representation, the canonical join representation for $\1$ does not exist.
Arguing dually, we see that the canonical meet representation for the bottom element $\0$ does not exist either.
In the lattice on right, each element has a canonical meet representation.
However, both $a\join d$ and $b\join c$ are minimal elements of $\ijr(\1)$.
Again, the canonical join representation of $\1$ does not exist.

In the lattice on the right, we observe the following failure of the distributive law:
both $e\join a$ and $e\join b$ are equal to the top element, but $e\join (a\meet b)$ is equal to $e$.
(A similar failure is easily verified among the atoms of the modular lattice.)
We will see that correcting for precisely this kind of failure of distributivity guarantees the existence of canonical join representations when $L$ is finite.

A lattice $L$ is \newword{join-semidistributive} if $L$ satisfies the following implication for every $x,y$ and $z$:
\begin{equation}\label{jsd}\tag{$SD_{\join}$}
\text{If $x\join y = x\join z$, then $x\join(y\meet z) = x\join y$}
\end{equation}
$L$ is \newword{meet-semidistributive} if it satisfies the dual condition:
\begin{equation}\label{msd}\tag{$SD_{\meet}$}
\text{If $x\meet y = x\meet z$, then $x\meet(y\join z) = x\meet y$}
\end{equation}
A lattice is \newword{semidistributive} if it is join-semidistributive and meet-semidistributive.
The following result, the finite case of \cite[Theorem~2.24]{free lattice}, says that this definition is equivalent to one given in the introduction.

\begin{theorem}\label{join_semi_cjr}
Suppose that $L$ is a finite lattice.
Then $L$ satisfies \ref{jsd} if and only if each element in $L$ has a canonical join representation.
Dually, $L$ satisfies \ref{msd} if and only if each element in $L$ has a canonical meet representation.
\end{theorem}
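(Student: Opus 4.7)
The plan is to treat the two biconditionals separately and focus on the first, since the second is formally dual. The reverse direction is the easier half: assuming every element of $L$ has a canonical join representation, I verify \ref{jsd} via refinement. Given $x \vee y = x \vee z = w$, the set $\{x, y\}$ contains an irredundant sub-representation $A$ of $w$, and $\can(w)$ join-refines $A$ by definition; in particular every $j \in \can(w)$ lies below $x$ or below $y$. The same argument with $\{x, z\}$ puts every $j \in \can(w)$ below $x$ or below $z$. Any $j \not\leq x$ therefore lies below $y \wedge z$, so $w = \Join \can(w) \leq x \vee (y \wedge z)$, and the reverse inequality is trivial.

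For the forward direction, I assume \ref{jsd} and build the canonical join representation of each $w$ explicitly from the lower covers. For each $y \in \covdown(w)$, set $S_y := \{x \in L : x \vee y = w\}$. The hypothesis \ref{jsd} gives $(x_1 \wedge x_2) \vee y = w$ whenever $x_1, x_2 \in S_y$, so $S_y$ is meet-closed; finiteness then produces a unique minimum $j_y$. A short argument shows $j_y$ is join-irreducible: a proper decomposition $j_y = u \vee v$ with $u, v < j_y$ forces $u \vee y, v \vee y < w$ by minimality, hence $u, v \leq y$ (using that $y \in \covdown(w)$), yielding $j_y \leq y$, which contradicts $j_y \vee y = w$.

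The remaining task is to show $C(w) := \{j_y : y \in \covdown(w)\}$ is the unique minimum of $\ijr(w)$, hence equals $\can(w)$. First, $\Join C(w) = w$: otherwise the join would sit below some $y \in \covdown(w)$, forcing $j_y \leq y$, a contradiction. Second, given any $A \in \ijr(w)$ and any $y \in \covdown(w)$, some $a \in A$ fails to lie below $y$ (else $\Join A \leq y < w$), so $a \vee y = w$ and minimality of $j_y$ yields $j_y \leq a$; this shows $C(w)$ join-refines $A$.

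The main obstacle I anticipate is verifying that $C(w)$ itself is irredundant (so that $C(w) \in \ijr(w)$, not merely a refinement of every such set). My plan is to fix a minimal $A \in \ijr(w)$ under join-refinement and show $A = C(w)$ by constructing mutually inverse maps $\covdown(w) \leftrightarrow A$: the refinement just proved provides $y \mapsto a_y \in A$ with $j_y \leq a_y$, and since $\Join\{a_y\} \geq \Join C(w) = w$, the irredundancy of $A$ forces $\{a_y\} = A$ as a set; conversely, for each $a \in A$ irredundancy gives a lower cover $y^* \geq \Join(A \setminus \{a\})$ satisfying $a \vee y^* = w$, hence $j_{y^*} \leq a$. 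To reconcile these maps I would use \ref{jsd} to establish injectivity of $y \mapsto j_y$: if $j := j_y = j_{y'}$ with $y \neq y'$, then $j \vee y = j \vee y' = w$ yields $j \vee (y \wedge y') = w$, contradicting minimality of $j_{y''}$ for any lower cover $y'' \geq y \wedge y'$ distinct from $y$ (and bootstrapping by induction on the height of $w$ if $y''$ happens to equal $y$). Injectivity plus the two surjections then pins down $A = C(w)$, simultaneously establishing irredundancy, uniqueness of the minimum, and the identification $\can(w) = C(w)$.
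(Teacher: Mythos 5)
Your reverse direction is sound, and the first three steps of your forward direction are correct: under \ref{jsd} each $S_y$ is meet-closed and hence has a minimum $j_y$, the element $j_y$ is join-irreducible, $\Join C(w)=w$, and $C(w)$ join-refines every join representation of $w$. (For comparison: the paper does not prove this theorem at all---it quotes it as the finite case of Theorem~2.24 of Freese--Je\v{z}ek--Nation---so your argument has to stand on its own.) The genuine gap is exactly where you anticipated it, in the injectivity step, and the argument you sketch there does not work. From $j:=j_y=j_{y'}$ you correctly get $j\join(y\meet y')=w$ via \ref{jsd}, and for any lower cover $y''\ge y\meet y'$ you get $j\join y''=w$, i.e.\ $j\in S_{y''}$. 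But ``minimality of $j_{y''}$'' then yields only $j_{y''}\le j$, which is not a contradiction of anything: it is consistent with every relation established so far, whether $y''$ equals $y$ or not. Note that the paper's own proof of injectivity of $\eta$ (Lemma~\ref{canonical cover}) uses the fact that $\can(w)$ refines every irredundant representation---that is, it presupposes the existence of the canonical join representation, which is precisely what you are trying to prove---so that argument cannot be imported here, and the parenthetical ``induction on the height of $w$'' is too vague to assess as a repair.

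The good news is that injectivity (equivalently, irredundancy of $C(w)$ itself) is not actually needed, and your steps 1--3 already suffice with one extra twist: let $A^*$ be the set of maximal elements of $C(w)$. Then $A^*$ is an antichain with $\Join A^*=\Join C(w)=w$, and since each element of $A^*$ is some $j_y$, your refinement argument shows $A^*$ join-refines \emph{every} join representation of $w$. Irredundancy of $A^*$ is now automatic: if $\Join\left(A^*\setminus\{a\}\right)=w$ for some $a\in A^*$, then $A^*$ join-refines $A^*\setminus\{a\}$, forcing $a\le a'$ for some $a'\in A^*\setminus\{a\}$ and contradicting that $A^*$ is an antichain. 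Since every irredundant representation is an antichain and join-refinement is a partial order on antichains, $A^*$ is the minimum of $\ijr(w)$, i.e.\ $\can(w)$ exists. I recommend replacing your final paragraph with this; as written, the proposal's concluding step is a genuine gap, not merely an omitted routine verification.
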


Assume that $L$ is a finite join-semidistributive lattice, and let $j\in \Irr(L)$.
We write $j_*$ for the unique element covered by $j$, and $\K(j)$ for the set of elements $a\in L$ such that $a\ge j_*$ and $a \not \ge j$.
 When it exists, we write $\kappa(j)$ for the unique maximal element of $\K(j)$.
 It is immediate that $\kappa(j)$ is meet-irreducible.
Below, we quote \cite[Theorem~2.56]{free lattice}:
\begin{proposition}\label{kappa}
A finite lattice is meet-semidistributive if and only if $\kappa(j)$ exists for each join-irreducible element $j$.
\end{proposition}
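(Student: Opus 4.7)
The plan is to prove the two directions separately, showing in each case that $\K(j)$ having a unique maximum is equivalent to the meet-semidistributive law.

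For the forward direction (meet-semidistributivity $\Rightarrow$ existence of $\kappa(j)$), I would fix $j \in \Irr(L)$ and show that $\K(j)$ is closed under binary joins. First observe that $\K(j)$ is nonempty, as $j_* \in \K(j)$. For any $a \in \K(j)$, the element $a \wedge j$ satisfies $a \wedge j \geq j_*$ (as $a \geq j_*$) and $a \wedge j < j$ (as $a \not\geq j$); since $j$ is join-irreducible, every element strictly below $j$ lies below the unique lower cover $j_*$, so $a \wedge j = j_*$. Given $a, b \in \K(j)$, apply \ref{msd} with $x = j$, $y = a$, $z = b$ to get $j \wedge (a \vee b) = j_*$, hence $a \vee b \not\geq j$; combined with $a \vee b \geq j_*$, this gives $a \vee b \in \K(j)$. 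Closure under joins in a finite lattice yields a unique maximum.

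For the reverse direction, I would argue by contradiction: suppose $\kappa(j)$ exists for every $j \in \Irr(L)$, but \ref{msd} fails for some triple $x,y,z$, so that $m := x\wedge y = x\wedge z$ while $m' := x \wedge (y \vee z) > m$. The key is to manufacture a join-irreducible $j$ with $j_* \leq m$ and $j \leq m'$. To do this, pick a cover $m \lessdot m^+ \leq m'$, and choose a join-irreducible $j \leq m^+$ with $j \not\leq m$; since $m^+$ covers $m$, we have $j \vee m = m^+$. Then I would verify that $j_* \leq m$: the element $j_* \vee m$ lies between $m$ and $m^+$, so it equals $m$ or $m^+$; if $j_* \vee m = m^+ \geq j$, then by join-irreducibility of $j$ either $j \leq j_*$ or $j \leq m$, both of which are false, so $j_* \vee m = m$, i.e., $j_* \leq m$.

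With $j$ in hand, I would show $y, z \in \K(j)$: both exceed $m \geq j_*$, and if $y \geq j$ then $j \leq x \wedge y = m$, contradicting $j \not\leq m$ (similarly for $z$). Therefore $\kappa(j) \geq y$ and $\kappa(j) \geq z$, so $\kappa(j) \geq y \vee z \geq m' \geq j$, contradicting the defining property $\kappa(j) \not\geq j$. I expect the main subtlety to be the construction of the right $j$ — specifically the short argument showing $j_* \leq m$ using the cover relation $m \lessdot m^+$ and the join-irreducibility of $j$ — since the rest of the argument is a straightforward bookkeeping of which inequalities place $y$ and $z$ into $\K(j)$.
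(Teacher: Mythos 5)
Your forward direction is correct and is the standard argument (the paper itself gives no proof of this proposition, quoting it from Freese--Je\v{z}ek--Nation): for $a\in \K(j)$ one has $j\meet a=j_*$, since every element strictly below a join-irreducible $j$ in a finite lattice lies below $j_*$; then \ref{msd} applied with $x=j$ shows $\K(j)$ is closed under binary joins, and a nonempty finite join-closed set has a maximum.

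The reverse direction, however, has a genuine gap at exactly the step you flagged as the main subtlety. From $j\le j_*\join m$ you conclude ``by join-irreducibility of $j$, either $j\le j_*$ or $j\le m$.'' That inference is join-\emph{primeness}, not join-irreducibility, and the two are not equivalent outside the distributive setting. The configuration already fails in $N_5$: with chains $\hat{0}\lessdot a\lessdot c\lessdot \hat{1}$ and $\hat{0}\lessdot b\lessdot \hat{1}$, take $m=b$ and $m^+=\hat{1}$, and choose the join-irreducible $j=c$; then $j\le m^+$, $j\not\le m$, $m\lessdot m^+$, and $j_*\join m=a\join b=m^+\ge j$, yet $j\not\le j_*$ and $j\not\le m$. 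So for an \emph{arbitrary} join-irreducible $j\le m^+$ with $j\not\le m$, the desired conclusion $j_*\le m$ can simply be false, and without it you cannot place $y$ and $z$ in $\K(j)$. The repair is to choose $j$ minimally: take $j$ minimal in the set $\{t\le m' : t\not\le m\}$ (nonempty, as it contains $m'$). Minimality forces $j$ to be join-irreducible --- if $j=\Join A$ with $j\notin A$, then each $a\in A$ satisfies $a<j$, hence $a\le m$ by minimality, giving $j\le m$, a contradiction --- and likewise forces $j_*\le m$, since $j_*<j$ and $j_*\le m'$. With this choice the rest of your bookkeeping goes through verbatim: $y,z\in \K(j)$ because $y,z\ge m\ge j_*$ while $j\le m'\le x$ rules out $j\le y$ or $j\le z$; hence $\kappa(j)\ge y\join z\ge m'\ge j$, contradicting $\kappa(j)\not\ge j$. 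Note that after this fix the cover $m\lessdot m^+$ plays no role and can be dropped entirely; it was only used in the invalid verification.
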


Suppose that $w\in L$.
For each $y\in \covdown(w)$, there is some element $j\in \can(w)$ such that $y\join j =w$ (because there is some element $j\in \can(w)$ such that $j\not \le y$).
For this $j$, the set $\can(w)$ join-refines $\{j, y\}$.
Because $\can(w)$ is an antichain, each $j'\in \can(w)\setminus \{j\}$ satisfies $j'\le y$.
Therefore, $j$ is the unique canonical joinand of $w$ such that $y\join j = w$.
We define a map $\eta: \covdown(w)\to \can(w)$ which sends $y$ to the unique canonical joinand $j$ such that $y\join j = w$.
\begin{lemma}\label{canonical cover}
Suppose that $L$ is a finite join-semidistributive lattice, and $w\in L$.
Then the map $\eta: \covdown(w) \to \can(w)$ is a bijection such that $y\ge \Join \can(w)\setminus\{\eta(y)\}$ and $y\in \K(\eta(y))$ for each $y\in \covdown(w)$.
\end{lemma}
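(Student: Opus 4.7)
The plan is to verify, in turn, that (a) $\eta$ is well-defined and satisfies the inequality $y\ge \Join(\can(w)\setminus\{\eta(y)\})$, (b) $y\in \K(\eta(y))$, and (c) $\eta$ is a bijection. Part (a) is already implicit in the discussion preceding the statement: because $y\vee \eta(y)=w$ shows $\{y,\eta(y)\}$ is a join-representation of $w$ that is join-refined by $\can(w)$, and because $\can(w)$ is an antichain, every canonical joinand other than $\eta(y)$ is forced below $y$.

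For (b), I would first dispose of the easy half $y\not\ge \eta(y)$: otherwise $y\vee \eta(y)=y\ne w$. The substantive half is $\eta(y)_*\le y$, and this is where I expect the main obstacle to lie; a direct application of \ref{jsd} to $y\vee \eta(y)=y\vee \eta(y)_*$ would only yield the tautology $y\vee \eta(y)_*=w$, so one must extract the inequality from the global minimality of $\can(w)$ instead. Writing $j=\eta(y)$, I would argue by contradiction: if $j_*\not\le y$, then $y<y\vee j_*\le y\vee j=w$ together with $y\covered w$ forces $y\vee j_*=w$. The pair $\{y,j_*\}$ is then an irredundant join-representation of $w$ (here $j_*<w$ since $j_*<j\le w$), so minimality of $\can(w)$ in the join-refinement order yields that $\can(w)$ refines $\{y,j_*\}$. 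Specializing to the element $j\in\can(w)$ gives either $j\le y$ (ruled out) or $j\le j_*$ (impossible since $j_*<j$), the desired contradiction.

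For the bijection in (c), I would handle surjectivity constructively: given $j\in\can(w)$, set $y':=\Join(\can(w)\setminus\{j\})$, which satisfies $y'<w$ by irredundancy of $\can(w)$, and pick any maximal element $y$ of the interval $[y',w)$; such a $y$ is automatically covered by $w$ and satisfies $y\vee j\ge y'\vee j=w$, hence $\eta(y)=j$. For injectivity, suppose $\eta(y_1)=\eta(y_2)=j$ with $y_1\ne y_2$. Then $y_1$ and $y_2$ are distinct elements covered by $w$, so $y_1\vee y_2=w=y_1\vee j$; applying \ref{jsd} to this pair of equal joins gives $y_1\vee (j\wedge y_2)=w$. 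Since $j$ is join-irreducible and $j\not\le y_2$, we have $j\wedge y_2\le j_*$, and combining this with the inequality $j_*\le y_1$ already established in (b) yields $y_1\vee(j\wedge y_2)\le y_1\vee j_*=y_1<w$, a contradiction.

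In short, the one genuinely delicate point is the inequality $j_*\le y$ in part (b); everything else follows either from the antichain property of $\can(w)$ or from a routine application of \ref{jsd} combined with the covering relation $y\covered w$.
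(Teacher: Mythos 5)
Your proof is correct, and three of its four components coincide with the paper's own argument: the well-definedness of $\eta$ together with $y\ge \Join\left(\can(w)\setminus\{\eta(y)\}\right)$ is taken from the same discussion preceding the lemma; your proof that $\eta(y)_*\le y$ (if not, $\{y,\eta(y)_*\}$ would be an irredundant join-representation of $w$ that $\can(w)$ must join-refine, forcing $\eta(y)\le y$ or $\eta(y)\le \eta(y)_*$, both impossible) is exactly the paper's; and your surjectivity argument (take a coatom $y$ of $w$ above $\Join\left(\can(w)\setminus\{j\}\right)$, which exists by irredundancy, and conclude $\eta(y)=j$) is the paper's as well. The one genuine divergence is injectivity. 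The paper reuses its single uniform tool: if $\eta(y)=\eta(y')$ for distinct covers, then $y\join y'=w$ but $\can(w)$ cannot join-refine $\{y,y'\}$, since $\eta(y)$ lies below neither element, contradicting refinement-minimality. You instead apply \ref{jsd} to $y_1\join y_2=y_1\join j$ to get $y_1\join(j\meet y_2)=w$, note $j\meet y_2\le j_*$ by join-irreducibility, and use the previously established $j_*\le y_1$ to trap $w\le y_1$. Your route is sound, but it makes injectivity depend on the $\K$-membership clause (so the order of the steps matters) and invokes the semidistributive law explicitly, whereas the paper's proof never uses \ref{jsd} directly---it runs entirely on the existence and refinement-minimality of $\can(w)$, which is equivalent by Theorem~\ref{join_semi_cjr}. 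The paper's version is thus marginally more economical and uniform (all three contradictions are instances of one principle); yours has the modest virtue of showing concretely how \ref{jsd} itself prevents two lower covers of $w$ from sharing a canonical joinand.
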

\begin{proof}
Suppose there exist distinct $y$ and $y'$ in $\covdown(w)$ satisfying $\eta(y) = \eta(y')$.
Then, $y\join y' = w$, and $\can(w)$ does not join-refine $\{y, y'\}$ (because $\eta(y)$ is below neither $y$ nor $y'$).
We have a contradiction, because $\can(w)$ is the unique minimal element (in join-refinement) of $\ijr(w)$.
By this contradiction, we conclude that $\eta$ is injective.
Suppose that $j\in \can(w)$.
Since $\Join \can(w)$ is irredundant, $\Join (\can(w) \setminus \{j\}) < w$.
Thus, there is some $y\in \covdown(w)$ such that $y\ge \Join (\can(w) \setminus \{j\})$.
If $y \ge j$ then $y = w$, and that is absurd.
We conclude that $j = \eta(y)$, and that $\eta$ is a bijection.

We have already argued, in the paragraph above the statement of the proposition, that $y\ge \Join \can(w)\setminus \{\eta(y)\}$.
To complete the proof, suppose that $y \join \eta(y)_* = w$.
Since, $\can(w)$ does not join-refine $\{y ,\eta(y)_*\}$ (because $\eta(y)\not \le \eta(y)_*$ and $\eta(y)\not \le y$), we obtain a contradiction as above.
We conclude that $y \join \eta(y)_* <w$.
Since $y$ is covered by $w$, we have $y \join \eta(y)_* = y$.
Thus, $y \in \K(\eta(y))$, for each $y\in \covdown(w)$.
\end{proof}

As a consequence we obtain a proof of Proposition~\ref{canonical joinands first time}, which we restate here with the notation from of Lemma~\ref{canonical cover}.
\begin{proposition}\label{canonical joinands}
Suppose that $L$ is a finite join-semidistributive lattice, and $y$ is covered by $w$ in $L$.
Then, $\eta(y)$ is the unique minimal element of $L$ such that $\eta(y) \join y = w$.
\end{proposition}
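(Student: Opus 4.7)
The plan is to verify minimality: by Lemma~\ref{canonical cover} we already have $\eta(y)\join y = w$, so given any $x\in L$ with $x\join y = w$, I must show $\eta(y)\le x$.

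The key idea is to exploit the minimality of $\can(w)$ in the join-refinement order. First I would form the union $\can(x)\cup\can(y)$; since this joins to $x\join y=w$, it is a (possibly redundant) join-representation of $w$. Pruning it to an antichain $A'\in\ijr(w)$ and invoking the defining property of the canonical join representation, I get that $\can(w)$ join-refines $A'$, and hence join-refines $\can(x)\cup\can(y)$. In particular, the canonical joinand $\eta(y)\in\can(w)$ lies below some element $i$ of $\can(x)\cup\can(y)$.

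Next I would argue that $i$ cannot belong to $\can(y)$. Indeed, every element of $\can(y)$ is $\le y$, so if $i\in\can(y)$ then $\eta(y)\le y$, which forces $\eta(y)\join y = y\ne w$, contradicting Lemma~\ref{canonical cover}. Therefore $i\in\can(x)$, and hence $\eta(y)\le i\le x$, completing the proof.

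I do not expect a substantial obstacle here: the argument is a direct application of the minimality of $\can(w)$ under join-refinement together with Lemma~\ref{canonical cover}. The only point needing a brief justification is that any subset of $L$ which joins to $w$ can be pruned down to a member of $\ijr(w)$ that the original set join-refines; this is immediate in a finite lattice by removing redundant elements one at a time.
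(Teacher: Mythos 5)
Your proof is correct and rests on the same key idea as the paper's: the join-refinement minimality of $\can(w)$ against a join-representation of $w$ built from $x$ and $y$, combined with the fact that $\eta(y)\not\le y$. The paper is just slightly more direct---it observes that $\can(w)$ join-refines the two-element set $\{x,y\}$ itself (with pruning to an irredundant subset handled exactly as you describe), so your detour through $\can(x)\cup\can(y)$ is sound but unnecessary.
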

\begin{proof}
Suppose that $x\in L$ has $x\join y = w$.
Since $\can(w)$ join-refines $\{x, y\}$ and $\eta(y)$ and $y$ are incomparable, we conclude that $\eta(y) \le x$.
\end{proof}

In fact, the previous proposition characterizes of finite join-semidistributive lattices.
(Similar constructions exist; for example, see the proof of \cite[Theorem~3-1.4]{sta2}.)
Because the proof is similar to the proof of Lemma~\ref{canonical cover}, we leave the details to the reader.
\begin{proposition}\label{converse}
Suppose that $L$ is a finite lattice.
The following conditions are equivalent:
\begin{enumerate}
\item\label{labeling} For each $w\in L$, there is a unique minimal element $\eta(y)\in L$ satisfying $y\join \eta(y) =w$, for each $y\in \covdown(w)$.
\item $L$ is join-semidistributive.
\end{enumerate}
\end{proposition}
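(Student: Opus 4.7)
The plan is to settle $(2) \Rightarrow (1)$ by observing that this is exactly Proposition~\ref{canonical joinands}, which translates the canonical join representation guaranteed by join-semidistributivity into the minimality characterization of $\eta(y)$. The substantive direction is $(1) \Rightarrow (2)$, which I would prove by explicitly exhibiting a canonical join representation of every $w \in L$ and invoking Theorem~\ref{join_semi_cjr}.

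For the construction, fix $w \in L$ and consider the candidate $A_w := \{\eta(y) : y \in \covdown(w)\}$. First I would verify that $\Join A_w = w$: the inequality $\Join A_w \le w$ is automatic, and if it were strict then finiteness yields some $y^* \in \covdown(w)$ with $\Join A_w \le y^*$ (take the penultimate element of a maximal chain in $[\Join A_w, w]$), whence $\eta(y^*) \le y^*$ and hence $y^* = y^* \join \eta(y^*) = w$, a contradiction. Next I would show that $A_w$ join-refines every $C \in \ijr(w)$: given $\eta(y) \in A_w$, the relation $\Join C = w \not\le y$ provides some $c \in C$ with $c \not\le y$; since $y \in \covdown(w)$, we get $c \join y = w$, and the defining minimality of $\eta(y)$ forces $\eta(y) \le c$.

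Since $A_w$ need not itself be irredundant, I would then pass to a minimal subset $A' \subseteq A_w$ with $\Join A' = w$. Minimality makes $A'$ irredundant, hence an antichain (recall that any irredundant join-representation is an antichain), so $A' \in \ijr(w)$; moreover $A'$ inherits from $A_w$ the property of join-refining every element of $\ijr(w)$. Because join-refinement is a partial order on antichains, this makes $A'$ the unique minimum of $\ijr(w)$, i.e., the canonical join representation of $w$. Theorem~\ref{join_semi_cjr} then concludes the argument.

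The main obstacle is the refinement step: it is where the minimality hypothesis on $\eta(y)$ is really used, and it is what guarantees that the canonical join representation extracted from $A_w$ is independent of the arbitrary choice of minimal joining subset. A small subtlety worth flagging is that $A_w$ itself may fail to be irredundant, so one cannot set $\can(w) = A_w$ directly; descending to a minimal joining subset and invoking uniqueness of minima under join-refinement handles this cleanly.
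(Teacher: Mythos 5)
Your proposal is correct, and it matches the route the paper intends: the paper leaves this proof to the reader as ``similar to the proof of Lemma~\ref{canonical cover}'', with direction (2)$\Rightarrow$(1) being exactly Proposition~\ref{canonical joinands}, and your direction (1)$\Rightarrow$(2) supplies the omitted details in just that spirit---building $\can(w)$ from the labels $\eta(y)$ via join-refinement and invoking Theorem~\ref{join_semi_cjr}. Your passage to a minimal joining subset $A'\subseteq A_w$ correctly handles the one genuine subtlety (that $A_w$ is not known in advance to be irredundant or an antichain), since $A'$ inherits the join-refinement property and join-refinement is a partial order on antichains.
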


Suppose that $L$ is a finite join-semidistributive lattice, $j\in \Irr(L)$ and $F$ is a face of the canonical join complex for $L$.
The following will be useful for determining when $F\cup \{j\}$ is also a face.

\begin{lemma}\label{lem: kappa canonical}
Suppose that $L$ is a finite join-semidistributive lattice and $j\in \Irr(L)$.
Then $j$ is a canonical joinand of $y\join j$, for each $y\in \K(j)$.
In particular, $j$ is a canonical joinand of $\Join F\join j$ if and only if $\Join F \join j > \Join F \join j_*$, for each subset $F$ of $L\setminus \{j\}$.
\end{lemma}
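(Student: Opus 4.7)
The plan has two stages matching the two assertions. For the first, fix $y\in\K(j)$, set $w=y\join j$, and argue that $j$ must appear in $\can(w)$ by using the minimality (under join-refinement) of the canonical join representation. Since $\Join\can(w)=w=y\join j$, the antichain $\can(w)$ join-refines $\{y,j\}$, so every $j'\in\can(w)$ satisfies either $j'\le y$ or $j'\le j$. In the latter case, because $j$ is join-irreducible, either $j'=j$ or $j'\le j_{*}$; but $y\ge j_{*}$ (this is the defining property of $\K(j)$), so $j'\le j_{*}$ also gives $j'\le y$. Thus if $j\notin\can(w)$, every element of $\can(w)$ is $\le y$, which would force $w\le y$ and hence $j\le y$, contradicting $y\in\K(j)$. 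Therefore $j\in\can(w)$.

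For the ``in particular'' statement, the forward implication $\Join F\join j>\Join F\join j_{*}\Rightarrow j\in\can(\Join F\join j)$ is an immediate application of the first part. Set $y:=\Join F\join j_{*}$. By construction $y\ge j_{*}$, and if $y\ge j$ then $y=y\join j=\Join F\join j$, contradicting the strict inequality; so $y\in\K(j)$. The first part then yields $j\in\can(y\join j)=\can(\Join F\join j)$.

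The reverse implication is the main obstacle. Suppose $j\in\can(w)$ where $w=\Join F\join j$, and suppose toward a contradiction that $w=\Join F\join j_{*}$. The idea is to use the bijection $\eta\colon\covdown(w)\to\can(w)$ from Lemma~\ref{canonical cover} to produce the cover $y=\eta^{-1}(j)\covered w$ with $y\in\K(j)$ and $y\join j=w$. Because $y$ is covered by $w$, either $\Join F\le y$ or $y\join\Join F=w$; the first leads to $w=\Join F\join j_{*}\le y\join j_{*}=y<w$, absurd, so $y\join\Join F=w$. By Proposition~\ref{canonical joinands} applied to the cover $y\covered w$, the minimality of $\eta(y)=j$ among elements $z$ with $y\join z=w$ then forces $j\le\Join F$. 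Under the working hypothesis that $F\cup\{j\}$ is an antichain (which is the setting in which this lemma will be applied, namely when testing whether $F\cup\{j\}$ is a face of the canonical join complex), this contradicts $j\notin F$ together with incomparability. Consequently $w\ne\Join F\join j_{*}$, and since $\Join F\join j_{*}\le\Join F\join j=w$ always holds, we conclude $\Join F\join j>\Join F\join j_{*}$.

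The technical heart is the reverse implication, where the interplay between the bijection $\eta$, the minimality from Proposition~\ref{canonical joinands}, and the uniqueness properties of $\can(w)$ must be combined carefully; everything else reduces to chasing join-refinements and exploiting the defining property $y\ge j_{*}$ of $\K(j)$.
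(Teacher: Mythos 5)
Your first assertion and the forward half of the ``in particular'' statement are proved correctly, and by essentially the same route as the paper: the paper likewise reduces everything to the fact that $\can(y\join j)$ join-refines $\{y,j\}$ (it splits $\can(w)$ into the parts below $j$ and below $y$ and shows the first part joins to $j$, using $y\ge j_*$ and $y\not\ge j$ exactly as you do), and its forward implication is precisely your observation that $\Join F\join j_*\in\K(j)$.

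The gap is in your reverse implication, at the final step. Via Lemma~\ref{canonical cover} and Proposition~\ref{canonical joinands} you correctly derive $j\le\Join F$, but the claim that this ``contradicts $j\notin F$ together with incomparability'' is a non sequitur: a join-irreducible can lie below the join of an antichain without lying below any of its members --- in the lattice of Figure~\ref{fig:join_semi}, for instance, $b\le a\join c$ although $\{a,b,c\}$ is an antichain. This distinction between $j\le\Join F$ and ``$j\le f$ for some $f\in F$'' is exactly what join-refinement is about, so the asserted contradiction does not materialize. The step can be repaired with one more use of the refinement property: $j\le\Join F$ gives $w=\Join F$, so $\can(w)$ join-refines the join-representation $F$, and $j\in\can(w)$ then forces $j\le f$ for some $f\in F$, which the hypothesis that no element of $F$ lies above $j$ does rule out. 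But once refinement is invoked this way, your whole detour through $\eta$ is unnecessary: assuming $w:=\Join F\join j=\Join F\join j_*$, the set $F\cup\{j_*\}$ is a join-representation of $w$, so $\can(w)$ join-refines it, and $j\in\can(w)$ would force $j\le j_*$ (absurd) or $j\le f$ (excluded); this two-line argument is surely the ``straightforward'' verification the paper has in mind. Finally, your instinct that an antichain-type hypothesis must be imported is correct and worth stating explicitly: as literally quantified over all $F\subseteq L\setminus\{j\}$, the only-if direction is false --- in the Boolean lattice $B_2$ with atoms $j,k$, take $F=\{j\join k\}$; then $j$ is a canonical joinand of $\Join F\join j=\1$ while $\Join F\join j=\Join F\join j_*$ --- and every application in the paper does have $j\not\le f$ for all $f\in F$ (indeed $F\cup\{j\}$ an antichain), so that assumption should be read into the statement.
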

\begin{proof}
If $y= j_*$, then the first statement is obvious (because $\{j\}$ is the canonical join representation), so we assume that $y$ and $j$ are incomparable.
We write $w$ for the join $j\join y$, and we write $A=\{j'\in \can(w): j'\le j\}$ and $A'=\{j'\in \can(w): j'\le y\}$.
Because $\can(w)$ join-refines $\{j,y\}$, we have $A\cup A'=\can(w)$. 
Also, the set $A$ is not empty because the join $y\join j$ is irredundant.
We want to show that $A=\{j\}$.
Since $j$ is join-irreducible, it is enough to show that $j= \Join A$.
Since $y\ge \Join A'$, we see that $\Join A \join y = j\join y$.
If $\Join A < j$, then $j_* \join y = j \join y$, and that is impossible because $y\in \K(j)$.
We conclude that $j$ is a canonical joinand of $y\join j$.

If $\Join F \join j > \Join F \join j_*$, then $\Join F \join j_*\in \K(j)$.
We conclude that $j$ is a canonical joinand of $\Join F \join j$.
The remaining direction of the second statement is straightforward to verify.
\end{proof}

Specifically, Lemma~\ref{lem: kappa canonical} implies that if $F$ is a face of the canonical join complex then $A\cup \{j\}$ is a face if and only if $\Join A \join j > \Join A \join j_*$.

We close this subsection by quoting the following easy proposition (for example see \cite[Proposition~2.2]{arcs}), which says that the canonical join complex is indeed a simplicial complex.
\begin{proposition}\label{simplicial complex}
Suppose $L$ is a finite lattice, and the join $\Join A$ is a canonical join representation in $L$. 
Then each proper subset of $A$ also joins canonically.
 \end{proposition}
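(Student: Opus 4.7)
Let $A' \subsetneq A$, set $w = \Join A$, and set $w' = \Join A'$. The plan is to show that $A'$ is the unique minimum element of $\ijr(w')$ under join-refinement; since a partial order has at most one minimum, this is exactly the statement that $A'$ is the canonical join representation of $w'$.

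First I would verify that $A' \in \ijr(w')$. Irredundancy of $A'$ follows from irredundancy of $A$: if some $a \in A'$ could be dropped from $A'$ without changing the join, then $\Join(A \setminus \{a\}) = \Join(A' \setminus \{a\}) \join \Join(A \setminus A') = w' \join \Join(A \setminus A') = w$, contradicting irredundancy of $A$. Next I would record the small auxiliary fact that for each $a \in A \setminus A'$, we have $a \not\le w'$; otherwise $A \setminus \{a\}$ would still join to $w$, again contradicting the irredundancy of $A$.

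The main step is to take an arbitrary $B \in \ijr(w')$ and show that $A'$ join-refines $B$. The idea is to lift $B$ back to an irredundant join-representation of $w$: since $\Join B \join \Join(A \setminus A') = w' \join \Join(A \setminus A') = w$, I can choose $C \subseteq B \cup (A \setminus A')$ minimal with $\Join C = w$, so $C \in \ijr(w)$. Because $\Join A = \can(w)$ join-refines $C$, each $a \in A$ lies below some $c \in C$. Using the antichain property of $A$ together with the auxiliary fact $a \not\le w'$ for $a \in A \setminus A'$, I would argue that $a \in A \setminus A'$ forces $c = a$ (the case $c \in B$ is ruled out by $c \le w'$), and hence $A \setminus A' \subseteq C$. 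Then for $a \in A'$, the witness $c \in C$ cannot lie in $A \setminus A'$ (two distinct elements of the antichain $A$ cannot be comparable), so $c \in B$, which is exactly the statement that $A'$ join-refines $B$.

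No step here looks genuinely hard: the whole argument is a careful bookkeeping exercise, the only subtlety being that one should not try to compare $B$ directly with $A'$ but rather lift $B$ to an irredundant representation of $w$ and exploit the canonicity of $A$. The main obstacle, if any, is keeping the case analysis for the witnesses $c \in C$ straight and remembering to invoke the antichain property of $A$ at the right moments.
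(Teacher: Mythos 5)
Your proof is correct. One point of comparison worth noting: the paper does not actually prove this proposition---it is quoted without proof from \cite[Proposition~2.2]{arcs}---so there is no internal argument to measure against, but your route (check $A'\in\ijr(w')$; lift an arbitrary $B\in\ijr(w')$ to an irredundant representation $C\subseteq B\cup(A\setminus A')$ of $w$; use that $\can(w)=A$ join-refines $C$, the antichain property of $A$, and the auxiliary fact $a\not\le w'$ for $a\in A\setminus A'$) is essentially the standard lifting argument from the cited source. Every step checks out, including the two places where care is needed: the minimality of $C$ under inclusion does yield $C\in\ijr(w)$, and the final conclusion is legitimate because all elements of $\ijr(w')$ are antichains, on which join-refinement is a partial order, so showing $A'$ join-refines every $B\in\ijr(w')$ does identify it as the unique minimal element.
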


\subsection{The flag property}\label{semidistributive sec}

In this section we prove Theorem~\ref{flag}.
We begin by presenting the key arguments in one direction the proof: 
If $L$ is a finite semidistributive lattice, then its canonical join complex is flag.
Most of the work is done in the following two lemmas.

\begin{lemma}\label{simplex_incomparable}
Suppose that $L$ is a finite semidistributive lattice, and $F$ is a subset of $\Irr(L)$ such that $|F|\ge3$ and each proper subset of $F$ is a face in the canonical join complex for $L$.
Then the joins $\Join (F\setminus \{j\})$ and $\Join (F\setminus \{j'\})$ are incomparable for each distinct $j$ and $j'$ in $F$.
\end{lemma}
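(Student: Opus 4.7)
The plan is to argue by contradiction, using the map $\kappa \colon \Irr(L) \to L$, which exists by Proposition~\ref{kappa} because $L$ is meet-semidistributive. Suppose for contradiction that $a_1 := \Join(F \setminus \{j_1\}) \leq \Join(F \setminus \{j_2\}) =: a_2$ for some distinct $j_1, j_2 \in F$. Since $|F| \geq 3$, I will fix a third element $j_3 \in F \setminus \{j_1, j_2\}$, which will play an essential role.

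First I will show that $j_2$ becomes redundant in $\Join F$. Because $j_1 \in F \setminus \{j_2\}$, we have $a_2 \geq j_1$, and hence
\[
\Join F \;=\; a_1 \vee j_1 \;\leq\; a_2 \vee j_1 \;=\; a_2 \;\leq\; \Join F,
\]
so $\Join F = \Join(F \setminus \{j_2\})$.

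The heart of the argument is to extract a $\kappa(j_2)$-bound from each of the two faces $F \setminus \{j_1\}$ and $F \setminus \{j_3\}$, both of which contain $j_2$. The general principle, which I will isolate from Lemma~\ref{lem: kappa canonical}, is: if $A \cup \{j\}$ is a face of the canonical join complex with $j \notin A$, then $\Join A \vee j_* < \Join A \vee j$, which forces $\Join A \vee j_* \not\geq j$; combined with $\Join A \vee j_* \geq j_*$, this places $\Join A \vee j_*$ in $\K(j)$, and so $\Join A \leq \kappa(j)$. Applied with $j = j_2$ to the two faces $F \setminus \{j_1\}$ and $F \setminus \{j_3\}$, this yields
\[
\Join(F \setminus \{j_1, j_2\}) \leq \kappa(j_2) \quad \text{and} \quad \Join(F \setminus \{j_2, j_3\}) \leq \kappa(j_2).
\]

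Finally, because $j_1, j_2, j_3$ are distinct, $(F \setminus \{j_1, j_2\}) \cup (F \setminus \{j_2, j_3\}) = F \setminus \{j_2\}$, so joining the two inequalities gives $\Join(F \setminus \{j_2\}) \leq \kappa(j_2)$. Combined with the first step, this produces $j_2 \leq \Join F = \Join(F \setminus \{j_2\}) \leq \kappa(j_2)$, contradicting the defining property $\kappa(j_2) \not\geq j_2$. The main conceptual obstacle is recognizing that the information from a single face such as $F \setminus \{j_1\}$ cannot bound $\Join(F \setminus \{j_2\})$ by $\kappa(j_2)$ on its own; one needs a second face whose complement in $F$ lies outside $\{j_2\}$, and the hypothesis $|F| \geq 3$ is exactly what provides the required third element $j_3$.
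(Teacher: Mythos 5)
Your proof is correct and follows essentially the same route as the paper's: both first upgrade the comparability assumption to $\Join(F\setminus\{j_2\})=\Join F$, then use the third element $j_3$ to extract from the two faces $F\setminus\{j_1\}$ and $F\setminus\{j_3\}$ two elements of $\K(j_2)$ whose join dominates $\Join(F\setminus\{j_2\})\ge j_2$, contradicting the existence of $\kappa(j_2)$ guaranteed by Proposition~\ref{kappa}. The only (cosmetic) difference is that the paper produces its elements of $\K(j_2)$ as lower covers via the bijection of Lemma~\ref{canonical cover}, whereas you take the elements $\Join A\join (j_2)_*$ directly via Lemma~\ref{lem: kappa canonical}.
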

\begin{proof}
Without loss of generality we assume that $\Join F = \1$.
Suppose there exists distinct $j, j'\in F$ such that $\Join (F \setminus \{j\}) \ge \Join (F \setminus \{j'\})$.
On the one hand, we have $\Join (F\setminus \{j\}) \join \Join (F \setminus \{j'\}) = \Join F = \1$.
On the other hand, $\Join (F \setminus \{j\}) \join \Join (F \setminus \{j'\})$ is equal to $\Join (F \setminus \{j\})$.
Thus, $\Join(F\setminus \{j\}) = \1$.
Since $F$ has at least three elements, there exists $j'' \in F\setminus \{j, j'\}$.
We write $w'$ for $\Join (F \setminus \{j'\})$ and $w''$ for $\Join (F \setminus \{j''\})$.
Because both $F \setminus \{j'\}$ and $F \setminus \{j''\}$ are faces in the canonical join complex, $j$ is a canonical joinand for both $w'$ and $w''$.
Lemma~\ref{canonical cover} implies that there exists $y' \in \covdown(w')$ and $y'' \in \covdown(w'')$ such that $y', y''\in \K(j)$.
Moreover, $y' \ge \Join (F\setminus \{j, j'\})$ and similarly $y''\ge \Join (F\setminus \{j, j''\})$.
So, we have: \[y'\join y'' \ge \Join (F \setminus \{j, j'\})\join \Join (F \setminus \{j, j''\}) = \Join (F \setminus \{j\}).\]
Since $\Join (F \setminus \{j\}) =\1$ we conclude that $\Join \K(j) = \1$, contradicting Proposition~\ref{kappa}.
\end{proof}


\begin{lemma}\label{if direction}
Suppose that $L$ is a finite join-semidistributive lattice, and $F$ is a subset of $\Irr(L)$ satisfying the following conditions:
First, $|F|\ge3$; second, each proper subset of $F$ is a face in the canonical join complex for $L$; third, $\Join F$ is irredundant; fourth $F$ is not a face of the canonical join complex.
Then there exists $j\in F$ such that $\kappa(j)$ does not exist.
\end{lemma}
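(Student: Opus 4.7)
The plan is to prove the contrapositive: assume that $\kappa(j)$ exists for every $j\in F$, and show that $F$ must be a face of the canonical join complex, contradicting the fourth hypothesis. Writing $w_j := \Join(F\setminus\{j\})$, the first step is to extract from the assumption that $F$ is not a face the identity $w_j\join j_* = \Join F$ for every $j\in F$. This comes from applying the remark following Lemma~\ref{lem: kappa canonical} with $A = F\setminus\{j\}$, which is a face: $A\cup\{j\} = F$ is a face if and only if $w_j\join j > w_j\join j_*$. Since $F$ is not a face, this inequality fails, and combined with $j_* \le j$ we obtain equality; but $w_j \join j = \Join F$ since $j\in F$, so $w_j\join j_* = \Join F$ as claimed.

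The key technical observation is that for every $j\in F$, $w_j \not\le \kappa(j)$. Indeed, if we had $w_j \le \kappa(j)$, then $j_*\le \kappa(j)$ would give $\Join F = w_j\join j_* \le \kappa(j)$, forcing $j \le \kappa(j)$ and contradicting the definition of $\K(j)$. I expect this to be the heart of the proof, and the main obstacle: recognizing that the identity $w_j\join j_* = \Join F$ together with $j_*\le\kappa(j)$ is precisely what blocks $w_j$ from sliding below $\kappa(j)$.

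The contradiction is then obtained by mimicking the $y'\join y''$ construction from the proof of Lemma~\ref{simplex_incomparable}, but now relative to $w_j$ in place of $\Join F$. Fix any $j\in F$, and use $|F|\ge 3$ to pick distinct $j', j''\in F\setminus\{j\}$. Since both $F\setminus\{j'\}$ and $F\setminus\{j''\}$ are faces and contain $j$, Lemma~\ref{canonical cover} produces elements $y'\in\covdown(w_{j'})$ and $y''\in\covdown(w_{j''})$ with $y', y''\in\K(j)$, $y'\ge \Join(F\setminus\{j,j'\})$, and $y''\ge\Join(F\setminus\{j,j''\})$. Because $(F\setminus\{j,j'\})\cup(F\setminus\{j,j''\}) = F\setminus\{j\}$, taking joins yields $y'\join y''\ge w_j$. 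On the other hand, since $\kappa(j)$ exists, every element of $\K(j)$ is bounded above by $\kappa(j)$, so $y'\join y'' \le \kappa(j)$. Chaining these inequalities gives $w_j \le \kappa(j)$, directly contradicting the second step. Hence some $\kappa(j)$ must fail to exist.
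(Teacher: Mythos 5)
Your Steps 2 and 3 are sound and are essentially the paper's argument, but Step 1 contains a genuine gap: the claim that $w_j\join j_* = \Join F$ holds \emph{for every} $j\in F$ is both unjustified and false. The remark after Lemma~\ref{lem: kappa canonical} is only reliable in one direction: if $A\cup\{j\}$ is a face then $j$ is a canonical joinand of $\Join A\join j$, so the strict inequality holds. The converse direction, whose contrapositive you invoke (``$F$ not a face $\Rightarrow$ $w_j\join j = w_j\join j_*$''), does not follow from Lemma~\ref{lem: kappa canonical}: the inequality $w_j\join j > w_j\join j_*$ certifies only that $j$ is a canonical joinand of $\Join F$, not that the elements of $F\setminus\{j\}$ remain canonical joinands, so its failure cannot be deduced merely from $F$ failing to be a face. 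Already in $N_5$ (Figure~\ref{pentagon}), with $A=\{c\}$ and $j=b$, one has $c\join b = \1 > c = c\join b_*$ while $\{b,c\}$ is not a face, since $\can(\1)=\{a,b\}$. Moreover, your universal identity can fail under all four hypotheses of the lemma: take atoms $a,u,c$, a join-irreducible $b$ covering $u$, elements $p=a\join u$, $q=c\join u$, $r=a\join c$, $p'=a\join b$, $q'=c\join b$ with $p<p'$, $q<q'$, and a top $w$ covering $p',q',r$. This eleven-element lattice is join-semidistributive; $F=\{a,b,c\}$ has all proper subsets faces, $\Join F=w$ irredundant, and $F$ not a face because $\can(w)=\{a,c,u\}$. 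Here $a,c\in\can(\Join F)$, so $w_a\join a_* = q' \neq w$; worse, $\kappa(a)=q'=w_a$ exists, so your Step 2 conclusion $w_j\not\le\kappa(j)$ is also false for $j=a$. Since your Step 3 says ``fix any $j\in F$,'' the argument collapses for such $j$.

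The repair is exactly the paper's opening move, and it is the one place where $j$ must be singled out: since $\can(\Join F)$ is irredundant, $F\subseteq\can(\Join F)$ would force $F=\can(\Join F)$, contradicting the fourth hypothesis; the paper derives the existence of some $j\in F$ with $j\notin\can(\Join F)$ from the irredundancy of $\Join F$ (a hypothesis your write-up never uses, which should have been a warning sign). For \emph{that} $j$, the genuine biconditional in Lemma~\ref{lem: kappa canonical} --- which is about $j$ being a canonical joinand, not about $F$ being a face --- yields $w_j\join j_* = \Join F$. With $j$ so chosen, your Steps 2 and 3 go through verbatim and reproduce the paper's proof: your chain $w_j \le y'\join y'' \le \kappa(j)$ contradicting $w_j\not\le\kappa(j)$ is just a repackaging of the paper's direct computation $y'\join y'' \ge j_*\join w_j = \Join F$, which shows $\K(j)$ has no maximum.
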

\begin{proof}
Without loss of generality, we assume that $\Join F = \1$.
Since the join $\Join F$ is irredundant, there exists some $j\in F$ such that $j\not \in \can(\1)$.
Lemma~\ref{lem: kappa canonical} implies that $j_* \join \Join (F\setminus \{j\}) = \1$.
Let $j'$ and $j''$ be distinct elements in $F\setminus \{j\}$.
As in the proof of Lemma~\ref{simplex_incomparable}, let $y'$ and $y''$ be the unique elements covered by $\Join F\setminus \{j'\}$ and $\Join F\setminus\{j''\}$, respectively, with $y'\join j = \Join F\setminus \{j'\}$ and $y'' \join j = \Join F\setminus\{j''\}$.
By Lemma~\ref{canonical cover}, $y'$ and $y''$ are both members of $\K(j)$, so that $y', y''\ge j_*$. 
Also $y' \ge \Join (F\setminus \{j, j'\})$ and $y''\ge~\Join(F \setminus \{j, j''\})$.
Therefore, $y'\join y'' \ge j_*\join  \Join (F\setminus \{j\}) =\1$.
The statement follows.
\end{proof}

\begin{proof}[Proof of one direction of Theorem~\ref{flag}.]
We show that if $L$ is semidistributive, then its canonical join complex is flag.
Suppose that $F\subset \Irr(L)$ such that $|F|\ge 3$ and each proper subset of $F$ is a face of the canonical join complex.
By Lemma~\ref{if direction}, it is enough to show that $\Join F$ is irredundant.
Without loss of generality, assume that $\Join F = \1$.
Lemma~\ref{simplex_incomparable} says that for each distinct $j$ and $j'$ in $F$, the joins $\Join (F\setminus \{j\})$ and $\Join (F\setminus \{j'\})$ are incomparable.
So, for any distinct $j$ and $j'$ in $F$, we have $\Join(F \setminus \{j\}) < \Join (F\setminus \{j\}) \join \Join (F\setminus \{j'\})$ which is $\1$.
We conclude that $\Join F$ is irredundant, and thus a face of the canonical join complex.
\end{proof}

We now turn to the other direction of Theorem~\ref{flag}.
In the following lemmas we will assume that $L$ is a finite join-semidistributive lattice which fails~\ref{msd}.
By Proposition~\ref{kappa}, there is some $j\in \Irr(L)$ such that $\kappa(j)$ does not exist.
Our goal is to construct a set $A\subset \Irr(L)$ satisfying:
\begin{enumerate}
\item\label{nonface} $A\cup \{j\}$ \textit{is not} a face in the canonical join complex for $L$ and
\item\label{edge face} each pair of elements in $A\cup \{j\}$ \textit{is} a face in the canonical join complex.
\end{enumerate}

The essential idea is that among all $Y\subset \Irr(L)$ satisfying~(\ref{nonface}), a set $A$ chosen as low as possible in $L$ will also satisfy~(\ref{edge face}).
For us, ``as low as possible in $L$'' means that $A$ is chosen to be minimal in join-refinement.
The argument is somewhat delicate because join-refinement is a preorder, not a partial order, on subsets of $L$.
So, we must take extra care to compare only antichains $Y\subset \Irr(L)$ satisfying~(\ref{nonface}).
To further emphasize this point, we write $A\refines B$ when $A$ join-refines $B$, for \textit{antichains} $A$ and $B$.
We write $\A_j$ for the collection of antichains $Y\subseteq L\setminus\{j\}$ satisfying $Y\cup\{j\}$ is an antichain.

\begin{lemma}\label{lem: kappa}
Suppose that $L$ is a finite join-semidistributive lattice and $j$ is in $\Irr(L)$ such that $\kappa(j)$ does not exist.
Let $X$ denote the set of $j'\in \Irr(L)\setminus \{j\}$ such that $j'\join j$ is a canonical join representation.
Then:
\begin{enumerate}
\item $\Join X \join j = \Join X \join j_*$;  
\item There exists a nonempty antichain $Y$ in $\A_j$ such that $\Join Y\join j=\Join Y\join j_*$.
\end{enumerate}
\end{lemma}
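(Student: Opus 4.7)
The plan is to reduce both parts to an injectivity argument for the bijection $\eta$ of Lemma~\ref{canonical cover}, applied at $m\join j$ for various maximal elements $m$ of $\K(j)$.

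First I would record two structural facts about any maximal $m\in\K(j)$. If $m<x\le m\join j$, then $x$ either enlarges $\K(j)$ past $m$ (if $x\not\ge j$) or satisfies $x\ge m\join j$, so $m$ is covered by $y_m:=m\join j$. By Lemma~\ref{lem: kappa canonical}, $j\in\can(y_m)$, and then Lemma~\ref{canonical cover} gives $\eta(m)=j$ together with $\Join F_m\le m$, where $F_m:=\can(y_m)\setminus\{j\}$. Each pair $\{j,j'\}$ with $j'\in F_m$ is a face of the canonical join complex by Proposition~\ref{simplicial complex}, so $F_m\subset X$. Since $\kappa(j)$ does not exist, $j_*$ is not maximal in $\K(j)$ (else it would be the unique maximum), so every maximal $m$ satisfies $m>j_*$, whence $y_m>j$, $|\can(y_m)|\ge 2$, and $F_m$ is nonempty.

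For part~(1), I would prove the equivalent inequality $\Join X\join j_*\ge j$ by contradiction. Assume $\Join X\join j_*\in\K(j)$ and pick a maximal $m\in\K(j)$ above it, so $\Join X\le m$. Choose a distinct maximal $m'\ne m$, which exists because $\kappa(j)$ does not exist. Then $F_{m'}\subset X$ gives $\Join F_{m'}\le m$, and $\Join F_{m'}\join j=m'\join j$ forces $m\join m'\le m\join j$. On the other hand, $m\join m'\ge j_*$ and, by the maximality of $m$ and $m'$, cannot lie in $\K(j)$; hence $m\join m'\ge j$, so the two inequalities collapse to $m\join m'=m\join j$. Symmetry gives $m\join m'=m'\join j$, whence $m\join j=m'\join j$. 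Thus $m$ and $m'$ are distinct lower covers of this common element, both with $\eta$-image $j$, contradicting the injectivity half of Lemma~\ref{canonical cover}.

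For part~(2), I would take $Y$ to be the antichain of maximal elements of $X$. The set $X$ is nonempty because $F_m\subset X$ is nonempty for every maximal $m\in\K(j)$, so $Y$ is nonempty. Each element of $Y$ is incomparable with $j$ because $\{j,j'\}$ is an antichain for every $j'\in X$, and hence $Y\in\A_j$. Finally $\Join Y=\Join X$, so the required equality follows from part~(1). The main obstacle is the symmetry step in part~(1) that equates $m\join j$ and $m'\join j$: one has to recognize that, under the contradiction hypothesis $\Join X\le m$, every other maximal element of $\K(j)$ must have the same join with $j$ as $m$, at which point the bijectivity of $\eta$ from Lemma~\ref{canonical cover} delivers the contradiction.
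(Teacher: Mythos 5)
Your overall architecture is sound and genuinely different from the paper's: you localize the contradiction at two distinct maximal elements of $\K(j)$ (which exist precisely because $\kappa(j)$ does not), whereas the paper works directly with the element $\Join X\join j$ and the distinguished lower cover $y\in\K(j)$ supplied by Lemma~\ref{canonical cover}, showing that every $a\in\K(j)$ lies below $y$, so that $y$ would be $\kappa(j)$. Your preliminary facts are all correct: each maximal $m\in\K(j)$ is covered by $m\join j$, one has $\eta(m)=j$ and $\Join F_m\le m$ with $\emptyset\neq F_m\subseteq X$ by Lemma~\ref{lem: kappa canonical}, Lemma~\ref{canonical cover} and Proposition~\ref{simplicial complex}, and your part~(2) (passing to the antichain of maximal elements of $X$, which lies in $\A_j$ since canonical join representations are antichains) is exactly the paper's reduction.

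However, the step ``Symmetry gives $m\join m'=m'\join j$'' is a genuine gap: the configuration is not symmetric in $m$ and $m'$. Your derivation of $m\join m'\le m\join j$ used the chain $\Join F_{m'}\le \Join X\le m$, and the hypothesis $\Join X\le m$ holds only for $m$, the maximal element you chose above $\Join X\join j_*$; nothing places $\Join X$ (or even $\Join F_m$) below $m'$, so the mirrored inequality $m\le m'\join j$, which is what $m\join m'\le m'\join j$ amounts to, has no proof. Without symmetry you only get $m\join m'\ge m'\join j$ (from $m\join m'\ge j$ and $m\join m'\ge m'$). Fortunately the argument closes one step earlier using a tool you already invoked: writing $w=m\join j$, you have $m\join m'=w$, $m\covered w$, and $\eta(m)=j$, so Proposition~\ref{canonical joinands} says $j$ is the unique minimal element of $L$ whose join with $m$ equals $w$; since $m'\join m=w$, this forces $j\le m'$, contradicting $m'\in\K(j)$. (This is essentially the paper's own final move, which applies Proposition~\ref{canonical joinands} at the lower cover of $\Join X\join j$ to force $j\le a$ for $a\in\K(j)$.) With that substitution your proof is complete; as written, the symmetry appeal, and hence the claim that every maximal element of $\K(j)$ has the same join with $j$ as $m$, is unjustified.
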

\begin{proof}
Assume that $\Join X\join j>  \Join X\join j_*$.
Lemma~\ref{lem: kappa canonical} says that $j$ is a canonical joinand of $\Join X\join j$.
Also, for each element $a$ in $\K(j)$, $j$ is a canonical joinand of $a\join j$.
That is, $a\join j$ has the canonical join representation $\Join X'\join j$ for some subset $X'\subset X$.
Thus $a\join j \le \Join X\join j$, and in particular $a\le \Join X\join j$.
Lemma~\ref{canonical cover} implies that there is a unique element $y\in \K(j)$ covered by $\Join X\join j$.
If $a$ is not less than $y$, then $y\join a = \Join X \join j$.
Proposition~\ref{canonical joinands} says that $j$ is the unique minimal element of $L$ whose join with $y$ is equal to $\Join X\join j$.
Therefore, $j\le a$, contradicting the fact that  $a\in \K(j)$.
We conclude that $a\le y$.
We have proved that $y=\kappa(j)$, contradicting our hypothesis.
Thus, $\Join X\join j = \Join X\join j_*$.

For the second statement, observe that if $X$ is empty, then Lemma~\ref{lem: kappa canonical} implies that $\K(j) = \{j_*\}$, contradicting the assumption that $\kappa(j)$ does not exist.
We conclude that $X$ is nonempty.
Since the antichain of maximal elements $Y\subseteq X$ satisfies $\Join Y = \Join X$, we have the desired result.
\end{proof}

Lemma~\ref{lem: kappa} implies that among all $Y$ in $\A_j$ satisfying $\Join Y \join j=\Join Y \join j_*$, there is a nonempty minimal (in join-refinement) antichain.
In particular, there is an antichain that is minimal with this property among the antichains of the set $X = \{j'\in \Irr(L)\setminus \{j\} : j'\join j\text{ is a canonical join representation}\}$.
For this antichain $A$, we have that $A\cup\{j\}$ is \textit{not} a face of the canonical join complex, while $\{a,j\}$ \textit{is} a face, for each $a\in A$.
The next two lemmas are key in showing that $\{a, a'\}$ is a face in the canonical join complex for pair $a, a'\in A$.

Before we begin, we point out two easy observations about the join-refinement relation.
\begin{enumerate}[label={(JR\arabic*)}, ref={(JR\arabic*)}]
\item For any subsets $S$ and $T$ of $L$, if $S$ join-refines $T$ then each subset $S'\subseteq T$ also does. \label{JR1}
\item Suppose that $S\cup \{x\}$ and $T\cup\{x\}$ are antichains.
Then, $S\cup \{x\} \refines T\cup\{x\}$ if and only if $S\refines T$.\label{JR2} 
\end{enumerate}
\begin{lemma}\label{lem: refinement-minimal}
Suppose that $L$ is a finite join-semidistributive lattice, and $j$ is in $\Irr(L)$ such that $\kappa(j)$ does not exist.
Among all nonempty antichains $Y$ in $\A_j$ such that $\Join Y \join j =\Join Y \join j_*$, let $B$ be minimal in join-refinement.
Then the join $\Join\left( B\setminus \{b\}\right)\join j$ is a canonical join representation, for each $b\in B$.
\end{lemma}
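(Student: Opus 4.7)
The strategy is to show, for each $b \in B$, that $(B \setminus \{b\}) \cup \{j\}$ is the canonical join representation of $w := \Join(B \setminus \{b\}) \join j$. When $|B| = 1$ the claim is trivial: $B \setminus \{b\} = \emptyset$ and $w = j$, whose canonical join representation is $\{j\}$ since $j \in \Irr(L)$. So assume $|B| \ge 2$.

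First I verify that $j$ is a canonical joinand of $w$. By Lemma~\ref{lem: kappa canonical}, this is equivalent to $\Join(B \setminus \{b\}) \join j > \Join(B \setminus \{b\}) \join j_*$. If equality held instead, then $B \setminus \{b\}$ would be a nonempty antichain in $\A_j$ satisfying the defining equation, and since $B$ is an antichain, $B \setminus \{b\}$ strictly refines $B$ (nothing in $B \setminus \{b\}$ sits above $b$), contradicting the minimality of $B$.

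Now write $\can(w) = C \cup \{j\}$ with $j \notin C$. Because $\can(w)$ refines every antichain representation of $w$, applying \ref{JR2} yields $C \refines B \setminus \{b\}$. Suppose for contradiction that this refinement is strict; fix $b' \in B \setminus \{b\}$ with $b' \not\le c$ for every $c \in C$. Let $B'$ be the antichain of maximal elements of $C \cup \{b\}$. If $b \le c$ for some $c \in C$, then picking $b'' \in B \setminus \{b\}$ with $c \le b''$ (from $C \refines B \setminus \{b\}$) yields $b \le b''$, contradicting that $B$ is an antichain; hence $b \in B'$ and $B'$ is nonempty. Moreover $B' \in \A_j$ because every element of $B' \subseteq C \cup \{b\}$ is incomparable with $j$ (using that $B \cup \{j\}$ and $C \cup \{j\}$ are antichains). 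Trivially $B' \refines B$, while the chosen $b'$ is below neither $b$ nor any element of $C$, so $B$ does not refine $B'$; thus $B'$ strictly refines $B$.

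It remains to prove $\Join B' \join j = \Join B' \join j_*$ to contradict the minimality of $B$. Set $u = \Join B' = b \join \Join C$ and $v = \Join(B \setminus \{b\})$. Then
\[ u \join j = b \join \Join C \join j = b \join w = \Join B \join j = \Join B \join j_* = b \join v \join j_* = u \join (v \join j_*), \]
where the middle equality is the defining property of $B$ and the final equality uses $\Join C \le v$. Applying \ref{jsd} with $x = u$, $y = j$, $z = v \join j_*$ gives $u \join (j \meet (v \join j_*)) = u \join j$. If $j \le v \join j_*$, then $v \join j \le v \join j_*$, contradicting the first step; hence $j \meet (v \join j_*) < j$, which by join-irreducibility of $j$ forces $j \meet (v \join j_*) \le j_*$. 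Therefore $u \join j \le u \join j_*$, and equality follows. The main technical point is the choice of $(x, y, z)$ above: we must arrange both sides of the JSD hypothesis to collapse to $b \join w$ while leaving $j$ and $j_*$ on opposite sides so that the irreducibility of $j$ delivers the drop from $j$ down to $j_*$.
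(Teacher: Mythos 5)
Your proof is correct, and its skeleton matches the paper's: both arguments use minimality of $B$ together with Lemma~\ref{lem: kappa canonical} to see that $j$ is a canonical joinand of $w=\Join\left(B\setminus\{b\}\right)\join j$, write $\can(w)=C\cup\{j\}$, deduce $C\refines B\setminus\{b\}$ via \ref{JR2}, and then invoke minimality of $B$ a second time to force $C=B\setminus\{b\}$. You diverge in two local but genuine ways. First, for the key equality the paper argues formally at the level of canonical joinands: if $\Join\left(C\cup\{b\}\right)\join j_*<\Join\left(C\cup\{b\}\right)\join j$, then Lemma~\ref{lem: kappa canonical} would make $j$ a canonical joinand of $\Join\left(C\cup\{b\}\right)\join j=\Join B\join j$, which is impossible because $\Join B\join j=\Join B\join j_*$. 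You instead compute the equality bare-hands: collapsing $u\join j$ and $u\join(v\join j_*)$ to $b\join w$, applying \ref{jsd} with $(x,y,z)=(u,j,v\join j_*)$, and using the unique lower cover of $j$ to drop $j\meet(v\join j_*)$ to $j_*$. The paper's route is shorter; yours is more self-contained and makes explicit exactly where join-semidistributivity and the join-irreducibility of $j$ enter. Second, the paper handles the possibility that $C\cup\{b\}$ is not an antichain by a two-case analysis: it rules out $c\ge b$ using the strict inequality $\Join\left(B\setminus\{b\}\right)\join j<\Join B\join j$, trims $C'=\{c\in C:c<b\}$, and derives a contradiction from $C$ join-refining its proper subset $C\setminus C'$. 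You streamline this by passing at once to the antichain $B'$ of maximal elements of $C\cup\{b\}$ (which is exactly the paper's $(C\setminus C')\cup\{b\}$, your antichain argument excluding $b\le c$ in place of the paper's join inequality) and organizing the endgame as a single strictness contradiction, with the witness $b'$ certifying that $B'$ strictly join-refines $B$. Both versions land on the same contradiction to minimality of $B$ in $\A_j$; your explicit $|B|=1$ base case is a small point of care that the paper leaves implicit.
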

\begin{proof}
Lemma~\ref{lem: kappa} implies that such an antichain $B$ exists.
Observe that $B\setminus \{b\} \refines B$, by~\ref{JR1}.
We conclude that $\Join\left( B\setminus \{b\}\right)\join j_*<\Join \left(B\setminus \{b\}\right)\join j$.
Lemma~\ref{lem: kappa canonical} says that $j$ is a canonical joinand of $\Join\left( B\setminus \{b\}\right)\join j$ and not a canonical joinand of $\Join B\join j$.
Thus, 
\begin{equation}\label{relation}
\Join (B\setminus \{b\})\join j<\Join B\join j.
\end{equation}
Let $C\cup j$ be the canonical join representation of $\Join\left( B\setminus \{b\}\right)\join j$.
If we have $\Join\left( C\cup \{b\}\right)\join j_*<\Join \left(C\cup \{b\}\right)\join j$ then Lemma~\ref{lem: kappa canonical} says that $j$ is a canonical joinand of $\Join\left( C\cup \{b\}\right)\join j= \Join B\join j$.
That is a contradiction.
Therefore, $\Join \left(C\cup\{b\}\right)\join j =\Join \left(C\cup \{b\}\right)\join j_*$.

We claim that $C\cup \{b\} = B$.
Since $C\cup \{j\}$ is the canonical join representation for $\Join (B\setminus \{b\})\join j$, we have  $C\cup \{j\}\refines (B\cup\{j\}) \setminus \{b\}$.
By~\ref{JR2}, we have $C\refines B \setminus \{b\}$.
If $C\cup \{b\}$ is an antichain, then applying~\ref{JR2} again, we get $C\cup \{b\} \refines B$.
By minimality of $B$ we conclude that $C\cup \{b\} = B$, as desired.

So, we assume that $C\cup\{b\}$ is not an antichain.
By~\eqref{relation} we have \[\Join C \join j = \Join (B \setminus \{b\})\join j < \Join B \join j.\]
Therefore, there exists no $c\in C$ with $b\le c$.
Let $C'$ be the set of all $c\in C$ with $c< b$.
We make three easy observations:
First, $(C\setminus C') \cup \{b\}$ is member of $\A_j$.
Second, applying~\ref{JR1} to the relation $C\refines B\setminus \{b\}$, we have that $C\setminus C' \refines B\setminus \{b\}$.
By~\ref{JR2}, we conclude that $(C\setminus C')\cup \{b\} \refines B$.
Third, we have: \[\Join ((C\setminus C')\cup\{b\}) \join j=\Join (C\cup \{b\}) \join j=\Join (C\cup\{b\}) \join j_* = \Join ((C\setminus C')\cup\{b\}) \join j_*.\]
Therefore, by the minimality of $B$, we have $B=(C\setminus C' )\cup\{b\}$.
Since $C\refines B\setminus \{b\}$, we have that $C$ join-refines its proper subset $C\setminus C'$.
That is a contradiction (because $C$ is an antichain).
Thus, $C'$ is empty.
We have proved the desired result.
\end{proof}

\begin{lemma}\label{lem: minimal means canonical}
Suppose that $L$ is a finite join-semidistributive lattice and $j\in \Irr(L)$ such that $\kappa(j)$ does not exist.
Let $X$ be the set of $j'\in \Irr(L)\setminus \{j\}$ such that $j'\join j$ is a canonical join representation.
Let $A$ be nonempty and minimal in join-refinement among all antichains $Y\subseteq X$ such that $\Join Y\join j = \Join Y\join j_*$.
Then $A$ is minimal among all elements in $\A_j$, in join refinement,  with this property.
\end{lemma}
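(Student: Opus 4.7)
The plan is to argue by contradiction: suppose some $B\in\A_j$ satisfies $\Join B\join j=\Join B\join j_*$ and strictly refines $A$ (meaning $B\refines A$ but $A\not\refines B$), and derive a contradiction by forcing $B\subseteq X$ and then invoking the minimality of $A$ inside $X$. I would first pick such a $B$ minimal in join-refinement, which exists because $\refines$ is a partial order on antichains and the collection is closed under further refinements: if $Y'\refines Y\refines A$ with $A\not\refines Y$, then $Y'\refines A$ by transitivity, and $A\refines Y'$ together with $Y'\refines Y$ would force $A\refines Y$, a contradiction.

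Next I would rule out the case $|B|=1$. If $B=\{b\}$, the property reads $b\join j=b\join j_*$, which gives $j\le b\join j_*$. Since $B\refines A$, there is some $a\in A$ with $b\le a$, so $j\le a\join j_*$. But then $a\join j_*$ lies above both $a$ and $j$, hence above $a\join j$, forcing $a\join j=a\join j_*$. This contradicts the fact that $j$ is a canonical joinand of $a\join j$ (since $a\in X$), which by Lemma~\ref{lem: kappa canonical} requires $a\join j>a\join j_*$.

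With $|B|\ge 2$, I would replay the proof of Lemma~\ref{lem: refinement-minimal} inside this restricted minimality class. For each $b\in B$, the antichain $B\setminus\{b\}$ strictly refines $B$ by~\ref{JR1} and refines $A$ by transitivity, so it cannot satisfy the property without contradicting the minimality of $B$. Hence $\Join(B\setminus\{b\})\join j_*<\Join(B\setminus\{b\})\join j$, and Lemma~\ref{lem: kappa canonical} makes $j$ a canonical joinand of $\Join(B\setminus\{b\})\join j$, with canonical join representation $C_b\cup\{j\}$. The case analysis used in the proof of Lemma~\ref{lem: refinement-minimal} (combining~\ref{JR1}, \ref{JR2}, and the minimality of $B$) then yields $C_b=B\setminus\{b\}$, so $(B\setminus\{b\})\cup\{j\}$ is a canonical join representation. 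Proposition~\ref{simplicial complex} now makes each pair $\{b',j\}$ with $b'\in B\setminus\{b\}$ a face of the canonical join complex, i.e., $b'\in X$. Letting $b$ range over $B$, every element of $B$ lies in some $B\setminus\{b\}\subseteq X$, so $B\subseteq X$.

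At this point $B$ is a nonempty antichain in $X$ with the required property and $B\refines A$, so the minimality of $A$ inside $X$ forces $A\refines B$, contradicting the choice of $B$. The subtlest step is the third paragraph: I need to verify that every intermediate antichain built in the proof of Lemma~\ref{lem: refinement-minimal} (such as $C_b\cup\{b\}$, and in the harder subcase $(C\setminus C')\cup\{b\}$) still refines $A$, and so remains in the restricted collection where $B$ is minimal. This is immediate from transitivity, because each of those antichains refines $B$ and $B\refines A$.
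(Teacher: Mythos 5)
Your proof is correct, and its overall skeleton is the paper's: pass to a minimal (strict) refiner $B$ of $A$ in $\A_j$ with the property, split on $|B|=1$ versus $|B|\ge 2$, show the latter case forces $B\subseteq X$ via Lemma~\ref{lem: refinement-minimal} together with Proposition~\ref{simplicial complex}, and then contradict the minimality of $A$ inside $X$. Two local differences are worth recording. First, your singleton case is a genuinely different and more elementary argument than the paper's: the paper sets $w=a\join j$, invokes Lemma~\ref{canonical cover} to produce the element $y\in\K(j)$ covered by $w$ with $y\ge a$, and derives the contradiction $y=w$ by computing $(b\join j)\join y$ in two ways; you instead note that $b\join j=b\join j_*$ gives $j\le b\join j_*\le a\join j_*$, hence $a\join j=a\join j_*$, directly contradicting Lemma~\ref{lem: kappa canonical} because $a\in X$ makes $j$ a canonical joinand of $a\join j$. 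Your route avoids the cover structure of $w$ entirely and is shorter, at the cost of leaning on the ``if and only if'' in Lemma~\ref{lem: kappa canonical}, which you apply correctly. Second, your replay of the proof of Lemma~\ref{lem: refinement-minimal} inside the restricted minimality class is valid---your closing paragraph correctly checks that the intermediate antichains $C_b\cup\{b\}$ and $(C\setminus C')\cup\{b\}$ refine $B$ and hence $A$---but it is unnecessary: since join-refinement is transitive, the refiners of $A$ with the property form a down-set among all antichains in $\A_j$ with the property, so your minimal $B$ is automatically minimal in the global sense hypothesized by Lemma~\ref{lem: refinement-minimal}, and that lemma can be cited verbatim; this down-set observation is exactly what licenses the paper's one-line appeal to it.
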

\begin{proof}
Lemma~\ref{lem: kappa} implies that such an antichain $A$ exists.
Suppose that $B\in \A_j$ satisfies $\Join B\join j = \Join B \join j_*$, and $B\refines A$.
Without loss of generality, assume that $B$ is minimal in join-refinement with this property.
If $B$ has two or more elements, then Lemma~\ref{lem: refinement-minimal} implies that $B\subset X$.
Therefore, $B=A$.
Thus we can assume that $B=\{b\}$.
Since $B$ join-refines $A$, there is some $a\in A$ such that $b\le a$.
Write $w$ for the element $a\join j$.
Since $a\join j$ is the canonical join representation of $w$, Lemma~\ref{canonical cover} implies that $\covdown(w)$ has precisely two elements, $y$ and $y'$.
Let $\eta(y) =j$ and $\eta(y') =a$, so that $y\in \K(j)$ and $y\ge a$.
Thus, we have $b\le a\le y$.
On the one hand, $(b\join j) \join y = (b \join j_*) \join y =y$.
On the other hand, $b\join (j \join y) = b\join w = w$.
By this contradiction, we have proved the result.
\end{proof}
As in the previous lemma, let $A$ be minimal (in join-refinement) among all of the antichains $Y\subseteq X$ with the property that $\Join Y \join j = \Join Y \join j_*$.
In the next lemma, we show that each pair $\{a, a'\}$ in $A$ is canonical join representation.

\begin{lemma}\label{edges}
Suppose that $L$ is a finite join-semidistributive lattice and $j\in \Irr(L)$ such that $\kappa(j)$ does not exist.
Let $X$ be the set of $j'\in \Irr(L)\setminus \{j\}$ such that $j'\join j$ is a canonical join representation.
Let $A$ be nonempty and minimal in join-refinement among all antichains $Y\subseteq X$ such that $\Join Y\join j = \Join Y\join j_*$.
Then each pair of elements in $A$ is a face in the canonical join complex.
\end{lemma}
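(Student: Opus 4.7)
My plan is to argue by contradiction: assuming some pair $\{a,a'\}\subseteq A$ is not a face of the canonical join complex, I will produce an antichain in $\A_j$ that strictly join-refines $A$ and still satisfies $\Join(\,\cdot\,)\join j=\Join(\,\cdot\,)\join j_*$, contradicting the minimality of $A$ in $\A_j$ guaranteed by Lemma~\ref{lem: minimal means canonical}. Since $\{a,a'\}$ sits inside the antichain $A$, the join $a\join a'$ is automatically irredundant, so $\{a,a'\}$ fails to be a face only if one of $a,a'$ is not a canonical joinand of $a\join a'$; by Lemma~\ref{lem: kappa canonical}, after relabeling I may assume $a\join a'=a\join a'_*$.

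The key move is the swap $A_0:=(A\setminus\{a'\})\cup\{a'_*\}$, and I let $A''$ denote the antichain of maximal elements of $A_0$. Since $a\in A$ and $a\join a'=a\join a'_*$, we have $\Join A_0=\Join A$, so the equation $\Join A_0\join j=\Join A_0\join j_*$ is inherited from $A$. The rest is a case analysis on the position of $a'_*$ relative to $A\setminus\{a'\}$ and to $j$.

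If $a'_*\le c$ for some $c\in A\setminus\{a'\}$, then $A''=A\setminus\{a'\}$ and $\Join A''=\Join A_0$, so $\Join A''\join j=\Join A''\join j_*$; but Lemma~\ref{lem: refinement-minimal} (with $b=a'$) says that $\Join(A\setminus\{a'\})\join j$ is a canonical join representation having $j$ as a joinand, so by Lemma~\ref{lem: kappa canonical} this join strictly dominates $\Join(A\setminus\{a'\})\join j_*$, a contradiction. Otherwise $a'_*$ is incomparable with every element of $A\setminus\{a'\}$, and so $A''=A_0$. The antichain property of $A\cup\{j\}$ rules out $a'_*\ge j$, and the sub-case $a'_*\le j$ forces $a'_*\le j_*$ (since $j$ has a unique lower cover), which collapses the two relevant joins into $\Join(A\setminus\{a'\})\join j$ and $\Join(A\setminus\{a'\})\join j_*$ respectively, yielding the same contradiction via Lemma~\ref{lem: refinement-minimal}. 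The remaining sub-case has $a'_*$ incomparable with $j$ as well, so $A''\in\A_j$; $A''$ refines $A$ via $a'_*<a'$, and the refinement is strict since no element of $A''$ lies above $a'\in A$. This contradicts the minimality of $A$ in $\A_j$.

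The main delicacy is the sub-case bookkeeping around $a'_*$ and $j$: one must simultaneously track whether $A''$ lands in $\A_j$ and whether the key equation is preserved, and recognize that when $a'_*$ ``dissolves'' into $A\setminus\{a'\}$ or into $j_*$, the contradiction funnels through Lemma~\ref{lem: refinement-minimal} rather than through the minimality of $A$. Everything else reduces to routine manipulation of join-refinement together with the structural lemmas already in place.
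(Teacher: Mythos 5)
Your proof is correct, and its engine is the same as the paper's: replace $a'$ by its unique lower cover to form $(A\setminus\{a'\})\cup\{a'_*\}$, then either contradict the minimality of $A$ in $\A_j$ supplied by Lemma~\ref{lem: minimal means canonical}, or, when $a'_*$ dissolves below $A\setminus\{a'\}$ or below $j_*$, contradict Lemma~\ref{lem: refinement-minimal} through the strict inequality of Lemma~\ref{lem: kappa canonical}. The one real difference is scope: the paper runs this swap only when $|A|=2$ (with the same sub-cases $(a_1)_*<j$ and $j\le(a_1)_*$ that you treat), because for $|A|\ge3$ it gets every pair for free --- Lemma~\ref{lem: refinement-minimal} makes $\left(A\setminus\{a''\}\right)\cup\{j\}$ a face for a third element $a''$, and Proposition~\ref{simplicial complex} restricts faces to subsets. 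Your uniform swap subsumes both cases, buying independence from Proposition~\ref{simplicial complex} at the cost of the extra sub-case $a'_*\le c$ for some $c\in A\setminus\{a'\}$ (vacuous in the paper's two-element setting, where $a'_*\le a$ would force $a'\le a$); your bookkeeping there is sound: $a'_*\ge c$ is excluded by $A$ being an antichain, $a'_*\ge j$ by $A\in\A_j$, and $a'_*<j$ does give $a'_*\le j_*$ since $j_*$ is the unique lower cover of $j$, so each branch funnels correctly into Lemma~\ref{lem: refinement-minimal} or into strict join-refinement below $A$.
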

\begin{proof}
Lemma~\ref{lem: minimal means canonical} says that $A$ is minimal (in join-refinement) in $A_j$ among all $B\in A_j$ with the property that $\Join B \join j = \Join B\join j_*$.
So, Lemma~\ref{lem: refinement-minimal} says that for each $a\in A$, the join $\Join\left( A\setminus \{a\}\right)\join j$ is a canonical join representation.
If $A$ has three or more elements, then each pair of elements joins canonically by Proposition~\ref{simplicial complex}.
Assume that $A$ has two elements, $a_1$ and $a_2$.
Minimality of $A$ (in join-refinement) implies that the join $a_1\join a_2$ is irredundant.
We will argue that $a_1$ is a canonical joinand of $a_1\join a_2$, and complete the proof by symmetry.

Assume that $(a_{1})_* \join a_2 = a_1\join a_2$.
We observe that $(a_{1})_* \join a_2 \join j = (a_{1})_* \join a_2 \join j_*$. 
Since $A\subseteq X$, we have that both $\{a_1, j\}$ and $\{a_2, j\}$ are faces in the canonical join complex.
If $(a_{1})_* < j$, then we have $a_2\join j = a_2\join j_*$, contradicting Lemma~\ref{lem: kappa canonical}.
Also, $j\not \le (a_{1})_*$ because $a_1$ is not comparable to $j$.
So, we have $\{(a_{1})_*, a_2\}\in A_j$ with $(a_{1})_* \join a_2 \join j = (a_{1})_* \join a_2 \join j_*$, and $\{(a_{1})_*, a_2\}$ join-refines $\{a_1, a_2\}$.
But this contradicts Lemma~\ref{lem: minimal means canonical} which says that $A$ is minimal in $\A_j$.
By this contradiction, we conclude that $(a_1)_* \join a_2 < a_1\join a_2$.
Lemma~\ref{lem: kappa canonical} says that $a_1$ is a canonical joinand of $a_1\join a_2$.
\end{proof}

Finally, we complete the proof of the main result.

\begin{proof}[Proof of the remaining direction of Theorem~\ref{flag}]
We show that if $L$ is a finite join-semidistributive lattice and the canonical join complex for $L$ is flag, then $L$ is semidistributive.
By Proposition~\ref{kappa}, it is enough to show that for each $j\in \Irr(L)$ the element $\kappa(j)$ exists.

Suppose $j\in \Irr(L)$ and $\kappa(j)$ does not exist. 
As above, let $X$ be the set of $j'\in \Irr(L)\setminus \{j\}$ such that $j'\join j$ is a canonical join representation. 
Among all nonempty antichains of $X$, choose $A$ to be minimal in join-refinement with the property that $\Join A \join j = \Join A\join j_*$.
Lemma~\ref{lem: kappa} implies that such an antichain $A$ exists.
Lemma~\ref{lem: kappa canonical} implies that $A\cup\{j\}$ is not face of the canonical join complex.
Since $A\subseteq X$, we have that $\{a,j\}$ is a face of the canonical join complex, for each $a\in A$.
In particular, $A$ has at least two elements.
Finally, Lemma~\ref{edges} says that $\{a, a'\}$ is face in the canonical join complex, for each pair $a, a'\in A$.
We have reached a contradiction to our hypothesis that the canonical join complex is flag.
By this contradiction, we conclude that $L$ is semidistributive.
\end{proof}

Suppose that $m$ is meet-irreducible and write $m_*$ for the unique element covering $m$.
When it exists, let $\kappa_*(m)$ be the unique smallest element $j\in L$ with $j\le m_*$ and $j\not \le m$.
It is immediate that $\kappa_*(m)$ is join-irreducible.
Proposition~\ref{kappa}, applied to the dual lattice, says that $L$ is meet-semidistributive if and only if $\kappa_*(m)$ exists for each meet-irreducible element $m$.
In fact, $L$ is semidistributive if and only if $\kappa$ is a bijection, with inverse map $\kappa_*$; this is the finite case of \cite[Corollary~2.55]{free lattice}.
Applying the dual argument for the canonical meet complex, we immediately obtain the following result.
(Recall that Theorem~\ref{join_semi_cjr} says that each element in $L$ has a canonical meet representation if and only if $L$ is meet-semidistributive.)
\begin{corollary}\label{cor: meet}
Suppose that $L$ is a finite meet-semidistributive lattice.
Then, the canonical meet complex for $L$ is flag if and only if $L$ is semidistributive.
\end{corollary}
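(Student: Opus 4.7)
The plan is to deduce Corollary~\ref{cor: meet} from Theorem~\ref{flag} by a straightforward duality argument, rather than reproving it from scratch. Let $L^{\mathrm{op}}$ denote the lattice obtained from $L$ by reversing the order relation. Then meets and joins are interchanged: $x \meet y$ in $L$ equals $x \join y$ in $L^{\mathrm{op}}$, and vice versa. Consequently, $L$ satisfies \ref{msd} if and only if $L^{\mathrm{op}}$ satisfies \ref{jsd}; that is, $L$ is meet-semidistributive if and only if $L^{\mathrm{op}}$ is join-semidistributive. Similarly, $L$ is semidistributive if and only if $L^{\mathrm{op}}$ is.

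Next I would observe that an expression $\Meet A = w$ is a canonical meet representation in $L$ precisely when $\Join A = w$ is a canonical join representation in $L^{\mathrm{op}}$. This follows immediately from the definitions, since join-refinement in $L^{\mathrm{op}}$ corresponds to the ``dual'' notion of meet-refinement in $L$, and the partial order on order ideals flips accordingly under reversal. Therefore the canonical meet complex of $L$ coincides, as an abstract simplicial complex on the ground set $\Irr(L^{\mathrm{op}})$ (the meet-irreducibles of $L$), with the canonical join complex of $L^{\mathrm{op}}$.

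Combining these two observations, I would apply Theorem~\ref{flag} directly to $L^{\mathrm{op}}$: since $L^{\mathrm{op}}$ is a finite join-semidistributive lattice, its canonical join complex is flag if and only if $L^{\mathrm{op}}$ is semidistributive. Translating back through the duality yields that the canonical meet complex of $L$ is flag if and only if $L$ is semidistributive.

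The only place requiring any care is the verification that ``flag'' is manifestly self-dual (which it is, since the flag property is an intrinsic property of an abstract simplicial complex, having nothing to do with the ambient lattice), and that the correspondence $A \mapsto A$ really is a bijection between faces of the two complexes. These are essentially bookkeeping. I do not anticipate any genuine obstacle: the entire sequence of lemmas (Lemma~\ref{simplex_incomparable}, Lemma~\ref{if direction}, Lemma~\ref{lem: kappa}, Lemma~\ref{lem: refinement-minimal}, Lemma~\ref{lem: minimal means canonical}, Lemma~\ref{edges}) used in the proof of Theorem~\ref{flag} dualizes without incident once one passes to $L^{\mathrm{op}}$, so invoking Theorem~\ref{flag} as a black box is both legitimate and economical.
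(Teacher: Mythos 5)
Your proposal is correct and matches the paper's own treatment: the paper disposes of Corollary~\ref{cor: meet} with the single remark ``Applying the dual argument for the canonical meet complex, we immediately obtain the following result,'' which is exactly the duality you formalize by passing to $L^{\mathrm{op}}$ and invoking Theorem~\ref{flag} as a black box. Your explicit checks---that \ref{msd} for $L$ is \ref{jsd} for $L^{\mathrm{op}}$, that canonical meet representations in $L$ are canonical join representations in $L^{\mathrm{op}}$, and that flagness is an intrinsic property of the abstract complex---are just the bookkeeping the paper leaves implicit.
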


Next, we prove Corollary~\ref{iso} by showing that the bijection $\kappa$ taking a join-irreducible element $j$ to $\kappa(j)$ induces an isomorphism from the canonical join complex of $L$ to the canonical meet complex of $L$.
\begin{proof}[Proof of Corollary~\ref{iso}]
Corollary~\ref{cor: meet} says that the canonical meet complex of $L$ is flag, so it is enough to show that $\kappa$ bijectively maps edges of the canonical join complex to edges of the canonical meet complex.
Suppose that $\{j_1, j_2\}$ is a face of the canonical join complex, and write $m_1$ for $\kappa(j_1)$ and $m_2$ for $\kappa(j_2)$.
Suppose that $m_1\meet m_2= (m_{1})_* \meet m_2$.
Lemma~\ref{canonical cover} implies that there exists some $y\in \covdown(j_1\join j_2)$ satisfying : $j_1 \le y\le \kappa(j_2)$ (see Figure~\ref{isomorphism helper} for an illustration).
Since $j_1 \le (m_{1})_*$, we conclude that $j_1 \le (m_{1})_* \meet m_2= m_1\meet m_2$.
We see that $j_1 \le m_1$ and that is a contradiction.
Therefore, $(m_{1})_*\meet m_2 > m_1 \meet m_2$.
By the dual statement of Lemma~\ref{lem: kappa canonical}, we conclude that $m_1$ is a canonical meetand of $m_1\meet m_2$, and by symmetry $m_2$ is also a canonical meetand of $m_1\meet m_2$.
The dual argument establishes the desired isomorphism. 
\end{proof}

\begin{figure}[h]
  \centering
\begin{tikzpicture}
\draw [gray, dashed] (1.4,-1.4) arc [radius=2, start angle=-45, end angle= 45];
\draw[gray, dashed] (0,1.4) arc [radius=2, start angle=135, end angle=225];
\draw [black, thick] (0.07,1.45) -- (.68,2.1);
\draw [gray, dashed] (1.43,1.4) -- (.7,2.1);

\draw[fill] (0.07,1.45) circle [radius=0.07];
\node [above] at (-.1,1.4) {$y$};

\draw[fill] (0.68,2.1) circle [radius=0.07];
\node [above] at (0.61,2.1) {$j_1\join j_2$};

\draw[fill] (.01,-1.4) circle [radius=0.07];
\node [above] at (0.1,-1.37) {$j_1$};
\draw [black, thick] (0.01,-1.4) -- (.01,-2);
\draw[fill] (.01,-2) circle [radius=0.07];
\node [below] at (0,-2.1) {$(j_1)_*$};

\draw [gray, dashed] (.01,-1.4) -- (-1.5,.05);

\draw[fill] (-1.5,.05) circle [radius=0.07];
\node [above] at (-1.5,.05) {$(m_1)_*$};

\draw[fill] (-1.5,-.6) circle [radius=0.07];
\node [below] at (-1.65,-.65) {$\kappa(j_1)=m_1$};
\draw [black, thick] (-1.5,.05) -- (-1.5,-.6);

\draw[fill] (1.4,-1.4) circle [radius=0.07];
\node [above] at (1.37,-1.37) {$j_2$};
\draw [black, thick] (1.4,-1.4) -- (1.4,-2);

\draw[fill] (1.4,-2) circle [radius=0.07];
\node [below] at (1.5,-2.1) {$(j_2)_*$};
\end{tikzpicture}
 \caption{The above figure is an illustration of the argument for the proof of Corollary~\ref{iso}. Dashed gray lines represent relations in $L$, while thick black lines represent cover relations.}
        \label{isomorphism helper}
\end{figure}
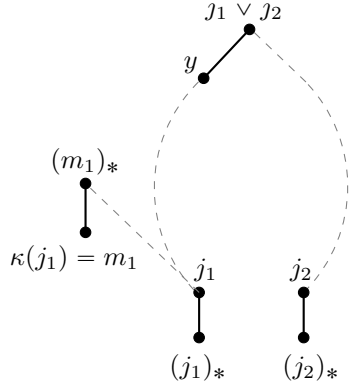 

We close this section by relating Corollary~\ref{iso} to Example~\ref{distributive lattices} and Example~\ref{noncrossing}, from Section~\ref{Examples}.

\begin{remark}\label{rowmotion}
\normalfont
Suppose that $F$ is a face of the canonical join complex for a finite semidistributive lattice $L$.
Corollary~\ref{iso} says that $\Join \kappa (F)$ is a canonical meet representation.
By taking the canonical join representation of $\Join \kappa (F)$, we can view the map $\kappa$ as an operation on the canonical join complex.
Similarly, we can view $\kappa_*$ as an action on the canonical meet complex.

The main premise of~\cite{kreweras} is that the action of Kreweras complementation on the set of noncrossing partitions and the action of Panyshev complementation on the set of nonnesting partitions (that is, the set of antichains in the root poset for a finite cystrallographic root system) coincide.
Indeed, both maps are an instance of the operation of $\kappa$ (or $\kappa_*$) on the canonical join complex (or canonical meet complex).

On the one hand, the action of $\kappa$ on the canonical join complex for the Tamari lattice coincides with Kreweras complementation (recall from Example~\ref{noncrossing} that canonical join representations in the Tamari lattice are essentially noncrossing partitions).
On the other hand, Panyshev complementation is a special case of an operation on the set of antichains in a finite poset $\poset$  called \newword{rowmotion}, as we now explain.
When $A$ is an antichain in $\poset$, we write $\Row(A)$ for the antichain $\{x\in \poset: x \text{ is minimal among elements not in $I_A$}\}$.
(Our notation is based on \cite{row}.
See also \cite{cyclic sieving, row reference brouwer, fon-der-flaass, row reference deza, fon-der-flaass II, row reference rush}.)
So, we have $I_A = I^{\Row(A)}$.
It follows immediately from the definition of $\kappa_*$ that $\kappa_*(I^y)\mapsto I_y$.
We obtain the following result.
\begin{proposition}\label{row prop}
Suppose that $\poset$ is a finite poset, and $A$ is an antichain in $\poset$.
Then the map $\kappa_*$, acting on faces of the canonical meet complex of $J(\poset)$, sends the order ideal $I^A$ to the order ideal $I^{\Row(A)}$.
\end{proposition}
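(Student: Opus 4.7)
The plan is to unwind the definition of $\kappa_*$ as an operation on the canonical meet complex, as sketched in Remark~\ref{rowmotion}, and then reduce the claim to the pointwise identity $\kappa_*(I^y) = I_y$ granted in the excerpt. By Example~\ref{distributive lattices}, the canonical meet representation of $I^A$ is $\bigcap_{y \in A} I^y$, so the face of the canonical meet complex associated with $I^A$ is $F = \{I^y : y \in A\}$. The induced action of $\kappa_*$ on $F$ is obtained by first applying $\kappa_*$ pointwise (landing, via the isomorphism of Corollary~\ref{iso}, in the canonical join complex), then taking the join of the resulting join-irreducibles in $J(\poset)$, and finally passing to the canonical meet representation of that join.

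Carrying this out is essentially bookkeeping. Applying $\kappa_*$ pointwise turns $F$ into $\{I_y : y \in A\}$. Since joins in $J(\poset)$ are unions of order ideals, $\Join_{y \in A} I_y$ equals the order ideal generated by $A$, namely $I_A$. Finally, by the very definition of rowmotion, $\Row(A)$ is the antichain of minimal elements of $\poset \setminus I_A$, so $I_A$ and $I^{\Row(A)}$ are the same order ideal; taking the canonical meet representation of this ideal yields $\{I^z : z \in \Row(A)\}$, which corresponds to $I^{\Row(A)}$ as required.

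The main conceptual step, and the only place where one has to be careful, is in interpreting what ``$\kappa_*$ acts on faces of the canonical meet complex'' means, since $\kappa_*$ on its own only sends meet-irreducibles to join-irreducibles. Corollary~\ref{iso} is what makes this well defined: it guarantees that $\kappa_*(F)$ joins canonically, so its join has a canonical meet representation that one can return as the image face. Once this dictionary between antichains, order ideals, and the faces of the two complexes of $J(\poset)$ is fixed, the remainder of the argument is routine.
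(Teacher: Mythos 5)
Your proof is correct and takes essentially the same route as the paper, which justifies the proposition in the two sentences preceding it by combining the dual of Example~\ref{distributive lattices} (the canonical meet representation of $I^A$ is $\bigcap\{I^y : y\in A\}$) with the pointwise identity $\kappa_*(I^y) = I_y$ and the observation that $I_A = I^{\Row(A)}$. Your added care in explaining why $\kappa_*$ gives a well-defined operation on faces via Corollary~\ref{iso} simply makes explicit what the paper leaves implicit in Remark~\ref{rowmotion}.
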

\end{remark}

\subsection{Crosscut-simplicial lattices}

In this section, we prove Corollary~\ref{crosscut converse}.
Recall that one direction of the proof was given as \cite[Theorem~3.1]{crosscut}.
Because it is easy, we give an alternative argument below.
Write $A$ for the set of atoms in $L$.
When $L$ is a finite semidistributive lattice every join of two atoms is a canonical join representation.
In particular,  Theorem~\ref{flag} implies that each distinct subset of atoms gives rise to a distinct element in $L$.
Thus the crosscut complex for $L$ is either the boundary of the simplex on $A$ or equal to the simplex on $A$, depending on whether $\Join A = \1$ or $\Join A <\1$.
Since each interval in $L$ inherits semidistributivity, it follows that $L$ is crosscut-simplicial.

Before we proceed with the proof of the converse, we point out that the join-semidistributivity hypothesis in Corollary~\ref{crosscut converse} is crucial.
(For example, consider the crosscut-simplicial lattice shown in Figure~\ref{crosscut}.
This lattice fails both \ref{jsd} and~\ref{msd}.)
\begin{figure}[h]
  \centering
   \scalebox{1}{ \includegraphics{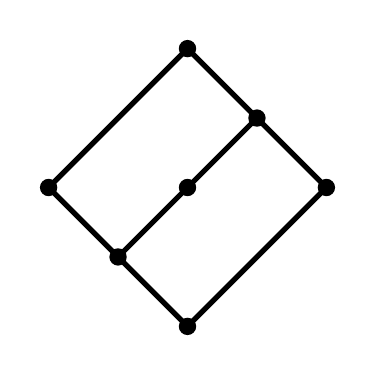}}
     \caption{A finite crosscut-simplicial lattice failing both \ref{jsd} and \ref{msd}.}
        \label{crosscut}
\end{figure}   
Join-semidistributivity gives us a powerful restriction: 
A finite join-semidistributive lattice $L$ fails \ref{msd} if and only if $L$ contains the lattice shown in Figure~\ref{fig:join_semi} as a sublattice (\cite[Theorem~5.56]{free lattice}).

We now begin our proof.
The following lemmas will be useful; the first is \cite[Lemma~9-2.5]{regions}.
\begin{lemma}\label{BEZ}
Suppose that $L$ is a finite lattice satisfying the following property:
If $x$, $y$, and $z$ are elements of $L$ with $x\meet y=x\meet z$ and if $y$ and $z$ cover a common element, then $x\meet(y\join z) = x\meet y$.
Then, $L$ is meet-semidistributive.
\end{lemma}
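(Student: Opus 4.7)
Set $m = x \meet y = x \meet z$. From $m \le y$ and $m \le z$ one gets $m \le y \meet z$, and in fact $x \meet (y \meet z) = m$; since $x \meet (y \join z) \ge m$ is automatic, only the reverse inequality $x \meet (y \join z) \le m$ is at issue. My plan is to induct on the pair $(n, \ell)$ in lexicographic order, where $n$ is the length of the longest chain in $[y \meet z,\, y \join z]$ and $\ell = \operatorname{length}[y \meet z,\, y] + \operatorname{length}[y \meet z,\, z]$. When $n \le 1$, the interval $[y \meet z,\, y \join z]$ contains at most two elements, so $y$ and $z$ are comparable and the conclusion is trivial.

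For the inductive step ($n \ge 2$), the target reduction is to the case where both $y$ and $z$ cover $y \meet z$; once there, the hypothesis of the lemma applies directly with common lower cover $y \meet z$ and gives the conclusion. Otherwise, without loss of generality $y$ does not cover $y \meet z$, and I pick a lower cover $y'$ of $y$ with $y' \ge y \meet z$ (necessarily $y' > y \meet z$). Short sandwich computations give $y' \meet z = y \meet z$ (since $y \meet z \le y' \meet z \le y \meet z$) and $x \meet y' = m$ (since $m = x \meet (y \meet z) \le x \meet y' \le x \meet y = m$), so $(x, y', z)$ is a valid smaller triple.

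The argument then splits on whether the join drops. If $y' \join z < y \join z$, the interval $[y \meet z,\, y' \join z]$ has rank strictly below $n$, so induction applied to $(x, y', z)$ yields $x \meet (y' \join z) = m$; I then apply induction a second time to the triple $(x, y,\, y' \join z)$, whose associated interval is $[y',\, y \join z]$, again of rank below $n$. A small but crucial check here is that $y \meet (y' \join z) = y'$: because $y \covers y'$, the meet lies in $\{y',\, y\}$, and the value $y$ would force $y \le y' \join z$ and hence $y' \join z \ge y \join z$, contradicting this subcase. If instead $y' \join z = y \join z$, then $n$ is unchanged but $\ell$ strictly drops (since $\operatorname{length}[y \meet z,\, y'] < \operatorname{length}[y \meet z,\, y]$), so the lex induction applied to $(x, y', z)$ gives $x \meet (y' \join z) = m$, whence $x \meet (y \join z) = m$.

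The main obstacle I anticipate is choosing an induction measure robust to the degenerate subcase $y' \join z = y \join z$, which keeps the interval fixed; adding the secondary parameter $\ell$ is what closes this gap. The other delicate point is the verification $y \meet (y' \join z) = y'$ in the first subcase, which is precisely what makes the second induction call legal and what allows the covering hypothesis to propagate upward from atoms of $[y \meet z,\, y \join z]$ to arbitrary pairs $y, z$.
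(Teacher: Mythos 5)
Your proof is correct, but there is nothing in the paper to compare it against: the paper quotes this lemma without proof from \cite[Lemma~9-2.5]{regions} (the result goes back to Bj\"orner, Edelman and Ziegler, as the label suggests), so the comparison is with the standard literature argument. Your argument is essentially that standard induction on the interval $[y\meet z,\,y\join z]$, organized a bit more carefully: the lexicographic measure $(n,\ell)$, with the secondary parameter $\ell$ absorbing the degenerate subcase $y'\join z=y\join z$ in which the interval does not shrink, is a clean way to make the descent explicit, and the two verifications you isolate --- $x\meet y'=m$ via the sandwich $m=x\meet(y\meet z)\le x\meet y'\le x\meet y=m$, and $y\meet(y'\join z)=y'$ via the cover relation $y'\covered y$ together with the assumption $y'\join z<y\join z$ --- are exactly the points where such proofs can silently fail, and both are right. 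Both induction calls in the first subcase land on intervals of rank strictly below $n$ (for $[y',\,y\join z]$ this uses $y'>y\meet z$, which you noted), so the lex descent is legitimate. One small repair is needed: in the inductive step you must first dispose of the case that $y$ and $z$ are comparable, where the conclusion is immediate since $y\join z\in\{y,z\}$; as written, if $y=y\meet z$ then ``$y$ does not cover $y\meet z$'' holds, yet no lower cover $y'$ of $y$ with $y'\ge y\meet z$ exists, so your selection step would fail. With incomparability assumed (which also forces $n\ge 2$), every remaining step checks out, and the covering hypothesis of the lemma is invoked exactly once, in the reduced situation where both $y$ and $z$ cover $y\meet z$.
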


\begin{lemma}\label{crosscut lemma}
Suppose that $L$ is a finite join-semidistributive lattice that is not meet-semidistributive.
Then there exists $x$, $y$, and $z$ such that $y\join z >x$ and $x$, $y$, and $z$ cover a common element.
\end{lemma}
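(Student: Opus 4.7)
The plan is to invoke the contrapositive of Lemma~\ref{BEZ} to obtain a witness triple, normalize it, and then use join-semidistributivity to promote it to three elements covering a common element with the desired join condition.

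Applying the contrapositive of Lemma~\ref{BEZ}, since $L$ fails \ref{msd} there exist $x,y,z\in L$ with $x\meet y=x\meet z=:v$, with $y$ and $z$ covering a common element $c$, and with $w:=x\meet(y\join z)>v$. Routine observations give $c=y\meet z$ and $v\leq c$, together with $w\not\leq c$ (else $w\leq y$ would force $w\leq v$). Replacing $x$ by $w$, I may assume $x\leq y\join z$, $x>v$, $x\meet y=x\meet z=v$, and $x\meet c=v$; in particular $x\join c>c$ while $x\join c\leq y\join z$.

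The crux will be to show that the interval $[c,y\join z]$ contains an atom $x^\ast$ distinct from $y$ and $z$. Suppose for contradiction that $y$ and $z$ are the only atoms of this interval; then by finiteness every $u\in[c,y\join z]$ with $u>c$ dominates $y$ or $z$. Applied to $u=x\join c$, this gives two cases. First, if $x\join c$ dominates both $y$ and $z$, then $x\join c=y\join z$, hence $x\join y=y\join z$, and applying \ref{jsd} to the triple $(y,x,z)$ yields $y=y\join v=y\join(x\meet z)=y\join x=y\join z$, contradicting $y<y\join z$. Second, if $x\join c$ dominates only $y$ (the case of only $z$ being symmetric), then $x\join y=x\join c$, while $x\join z\geq x\join c\geq y$ forces $x\join z=y\join z$; applying \ref{jsd} to $(z,y,x)$ then yields $z=z\join v=z\join(y\meet x)=z\join y=y\join z$, again a contradiction.

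Once such an $x^\ast$ is produced, the three elements $x^\ast,y,z$ are distinct, all cover $c$, and satisfy $y\join z>x^\ast$ strictly (since $x^\ast$ covers $c$ while $y\join z>y>c$), completing the proof. The main obstacle is the case analysis in the preceding paragraph, where one must identify the correct instance of \ref{jsd} to apply. The key insight is that $y$ or $z$ must serve as the \emph{pivot} element, not $x$: this is what makes $x\meet z$ or $y\meet x$ collapse to $v$ and thus produces the telescoping equality $y\join z=y$ or $y\join z=z$ that yields the contradiction.
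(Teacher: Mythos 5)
Your proof is correct, and it reaches the conclusion by a genuinely more elementary route than the paper's. Both arguments begin with the contrapositive of Lemma~\ref{BEZ}, but the paper then runs an induction on the size of $L$ (whose base case appeals to the fact that a finite join-semidistributive lattice failing \ref{msd} contains the lattice of Figure~\ref{fig:join_semi} as a sublattice) and chooses the witnessing triple $\{x,y,z\}$ to be minimal in join-refinement---a somewhat delicate device, since join-refinement is only a preorder---in order to prove $x< y\join z$, reduce to the case $y\join z=\1$, show that $x\join y$ and $x\join z$ are incomparable, and finally extract a third cover $w'\in\covup(a)\setminus\{y,z\}$ with $w'\le a\join x$. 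You short-circuit all of this: replacing $x$ by $x\meet(y\join z)$ accomplishes in one line the normalization the paper gets from minimality, and your two direct applications of \ref{jsd} (with pivots $y$ and $z$, collapsing $x\meet z$ and $x\meet y$ to $v$) inside the interval $[c,y\join z]$ do the work of the paper's incomparability claim together with its case analysis on $\covup(a)$, but without needing $y\join z=\1$---so the induction, the join-refinement minimality, and the citation of the sublattice characterization all disappear. The endgames differ only cosmetically: the paper's third cover $w'$ is pinned below $a\join x$, while your $x^\ast$ is an arbitrary third atom of $[c,y\join z]$ (and atoms of that interval are covers of $c$ in $L$ by order-convexity), and $x^\ast<y\join z$ is strict since $y\join z$ lies strictly above $y$, which lies strictly above $c$. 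The only gap worth patching is trivial: you should note explicitly that $y\ne z$ (if $y=z$ then $x\meet(y\join z)=x\meet y$, contradicting the failure of \ref{msd}), since this is what justifies $c=y\meet z$ and the strict inequalities $y<y\join z$ and $z<y\join z$ that your two cases contradict.
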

\begin{proof}
We prove the proposition by induction on the size of $L$.
As mentioned above, $L$ contains the lattice shown in Figure~\ref{fig:join_semi} as  sublattice, and this proves the base case.
By Lemma~\ref{BEZ}, we can assume that there exist $x$, $y$, and $z$ in $L$ such that $x\meet y= x\meet z$, $x\meet (y\join z) \ne x\meet y$, and $\covdown(y)\cap \covdown(z)$ is not empty.
 We choose such a triple so that the set $\{x,y,z\}$ is minimal in join-refinement, among all such triples.
 Write $a$ for the element in $\covdown(y)\cap \covdown(z)$ (if there is more than one element in $\covdown(y)\cap \covdown(z)$, then $y\meet z$ does not exist).
If $x$ also covers $a$, then we are done (because if $x\covers a$ and $y\join z\not > x$, then $(y\join z) \meet x = a$, and that contradicts our assumption that $\{x,y,z\}$ fail \ref{msd}).
So we assume that $x$ does not cover $a$.

We first prove that $x< y\join z$ (see Figure~\ref{crosscut incomparable} for an illustration).
We write $w$ for $x\meet(y\join z)$.
Since $x\meet y = x\meet z$, we have $x\meet y < w$ (because $x$, $y$ and $z$ fail~\ref{msd}, the inequality is strict).
On the one hand $w\meet(x\meet y) = x\meet y$.
On the other hand, $x\ge w$, so $(x\meet w)\meet y = w\meet y$.
By symmetry, $w\meet z= x\meet z$.
Therefore, $w\meet y= w\meet z$.
Observe that $w\ne y\meet w$ (otherwise $w\le y\meet x$, and that is absurd).
Since, $w\meet (y\join z) = w$ we have $\{w, y, z\}$ fails~\ref{msd}.
Since $\{w, y,z\}$ join-refines $\{x,y,z\}$, minimality of $\{x,y,z\}$ implies that $w=x$.
We have proved the claim that $y\join z > x$.
By induction, we may assume that $y\join z =\1$.

\begin{figure}[h]
  \centering
\begin{tikzpicture}
\draw [gray, dashed] (1.4,-1.4) arc [radius=2, start angle=-45, end angle= 45];
\draw[gray, dashed] (0,1.4) arc [radius=2, start angle=135, end angle=225];
\draw [gray, dashed] (1.43,1.4) -- (.7,2.1);
\draw [gray, dashed] (0,1.4) -- (.7,2.1);


\draw[fill] (0.68,2.1) circle [radius=0.07];
\node [above] at (0.68,2.1) {$y\join z$};

\draw[fill] (0.75,-2.25) circle [radius=0.07];
\node [below] at (0.75,-2.3) {$a$};

\draw [black, thick] (0.75,-2.3) -- (1.4,-1.4);
\draw [black, thick] (0.75,-2.3) -- (.01,-1.4);

\draw[fill] (0,-1.4) circle [radius=0.07];
\node [left] at (0,-1.5) {$y$};

\draw[fill] (1.4,-1.4) circle [radius=0.07];
\node [right] at (1.4,-1.5) {$z$};

\draw[fill] (0.72,-0.3) circle [radius=0.07];
\node [left] at (0.73,-0.34) {$w$};
\draw [gray, dashed] (0.73,-.17) -- (.69,2);
\draw [gray, dashed] (0.73,-0.45) -- (.75,-2.1);

\draw[fill] (-.9,1.55) circle [radius=0.07];
\node [left] at (-1,1.55) {$x$};
\draw [gray, dashed] (-.85,1.4) arc [radius= 2.6, start angle=195, end angle=242];
\end{tikzpicture}
 \caption{The above figure is an illustration of the argument for the proof that $y\join z > x$. Dashed gray lines represent relations in $L$, while thick black lines represent cover relations.}
        \label{crosscut incomparable}
\end{figure}

Next, we claim that $x\join y$ and $x\join z$ are incomparable.
By way of contradiction assume that  $x\join z \ge x\join y$, so we have $x\join z\ge x, y, z$.
Therefore, $z\join x = z\join y$.
Since $z\join (x\meet y) =z$ and $L$ is join-semidistributive, we have $z=\1$, contradicting the fact that $x\meet z \ne x\meet( y\join z)$.
We have proved the claim that $x\join y$ and $x\join z$ are incomparable.

Suppose that $\{y,z\} = \covup(a)$.
Then, either $y\le a\join x$ or $z\le a\join x$, but not both.
(Indeed, if $x\join a \ge y, z$ then $x\join a = \1$, so $x\join a = x\join y =x\join z$.
This contradicts the fact that $x\join y$ and $x\join z$ are incomparable.)
If $y< x\join a$ then $y<x\join a\le x\join z$.
Thus we have $x\join y \le x\join z$, contradicting the fact that $x\join y$ and $x\join z$ are incomparable.
We conclude that there is some $w'\in \covup(a) \setminus \{y,z\}$ with $w'\le a\join x$.
The triple $\{w', y, z\}$ satisfies the statement of the proposition.
\end{proof}

\begin{proof}[Proof of Theorem~\ref{crosscut converse}]
We prove that if $L$ is join-semidistributive and crosscut-simplicial then it is semidistributive.
Assume that $L$ is fails~\ref{msd}.
Lemma~\ref{crosscut lemma} says that there exists $x$, $y$ and $z$ covering a common element $a\in L$ such that $y\join z> x$.
In particular, the interval $[a, y\join z]$ is not crosscut-simplicial because $\{y,z\}$ is not a face in the crosscut complex.
That is a contradiction.
Therefore, $L$ is a finite semidistributive lattice, and the statement follows from Theorem~\ref{flag}.
\end{proof}

\section{Lattice-theoretic constructions}\label{operations}
\subsection{Sublattices and quotient lattices}\label{subs}
A map $\phi:L\to L'$ between lattices $L$ and $L'$ is a \newword{lattice homomorphism} if $\phi$ respects the meet and join operations.
The image of $\phi$ is a \newword{sublattice} of $L'$ and a \newword{lattice quotient} of $L$.
It is immediate that each sublattice of a semidistributive lattice is also semidistributive.
When $L$ is finite, the image $\phi(L)$ also inherits semidistributivity (see  \cite[Proposition~1-5.24]{regions}).
(Outside of the finite case, it is not generally true that if $L$ is semidistributive, then $\phi(L)$ is semidistributive; similarly for meet and join-semidistributivity.)
We obtain the following result as an immediate corollary of Theorem~\ref{flag}.
\begin{corollary}\label{subs and facs}
Suppose that $L$ is a finite join-semidistributive lattice whose canonical join complex is flag.
Then, the canonical join complex for each sublattice and quotient lattice of $L$ is also flag.
\end{corollary}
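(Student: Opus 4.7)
The plan is to use Theorem~\ref{flag} in both directions, together with the preservation of semidistributivity under sublattices and quotients (which the paper has already set up in the paragraph immediately preceding the corollary). Concretely, the hypothesis that the canonical join complex of $L$ is flag, combined with the fact that $L$ is a finite join-semidistributive lattice, upgrades via Theorem~\ref{flag} to the stronger statement that $L$ is semidistributive. Once I know every sublattice and every quotient lattice $L'$ of $L$ is also finite and semidistributive, I can apply Theorem~\ref{flag} in the reverse direction to $L'$ to conclude that its canonical join complex is flag.

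So the real content is to verify semidistributivity is inherited. For a sublattice $L'\subseteq L$, this is a routine check: if $x,y,z\in L'$ satisfy $x\join y=x\join z$, then the same equation holds in $L$ (joins in a sublattice agree with joins in $L$), so by \ref{jsd} applied in $L$ we get $x\join(y\meet z)=x\join y$ in $L$, and this identity transfers back to $L'$ because $L'$ is closed under meet and join; the \ref{msd} case is dual. For a quotient lattice $\phi(L)$ of a \emph{finite} semidistributive $L$, the required preservation is exactly the content of the already-cited result \cite[Proposition~1-5.24]{regions}, which the paper invokes in Section~\ref{subs}. Since sublattices and homomorphic images of a finite lattice are finite, both $L'$ as a sublattice and $\phi(L)$ as a quotient are finite semidistributive lattices, hence in particular finite join-semidistributive.

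Assembling the three ingredients, the corollary follows in one line: $L$ flag $\Rightarrow$ $L$ semidistributive (Theorem~\ref{flag}) $\Rightarrow$ every sublattice and quotient of $L$ is semidistributive (inheritance) $\Rightarrow$ every sublattice and quotient of $L$ has a flag canonical join complex (Theorem~\ref{flag} again). There is no genuine obstacle here; all the hard work is done by Theorem~\ref{flag} and by the quoted result on quotients of finite semidistributive lattices. The only mild subtlety worth flagging explicitly in the writeup is that one needs the finiteness hypothesis in order to cite \cite[Proposition~1-5.24]{regions}, which is why the corollary is stated only for the finite setting and not, say, as a statement about arbitrary semidistributive lattices.
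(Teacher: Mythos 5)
Your proof is correct and matches the paper's own reasoning exactly: the corollary is stated there as an immediate consequence of Theorem~\ref{flag}, using precisely the facts you cite, namely that sublattices of a semidistributive lattice are semidistributive and that finite quotients inherit semidistributivity by \cite[Proposition~1-5.24]{regions}, followed by a second application of Theorem~\ref{flag}. Your extra care in spelling out the sublattice check and the role of finiteness is a fine, if slightly more detailed, rendering of the same argument.
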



An equivalence relation $\Theta$ on $L$ is a \newword{lattice congruence} if $\Theta$ satisfies the following: if $x\equiv_{\Theta} y$, then $x\join t \equiv_{\Theta} y\join t$ and $x\meet t\equiv_{\Theta} y\meet t$ for each $x,y,$ and $t$ in $L$ (see \cite[Lemma~8]{foundations}).
It is immediate that the fibers of a lattice homomorphism $\phi$ constitute a lattice congruence of $L$.
Conversely, each lattice congruence also gives rise to a lattice quotient (see \cite[Theorem~11]{foundations}).

When $L$ is finite, $\Theta$ is lattice congruence if and only if it satisfies the following:
Each class is an interval; the map $\pidown^{\Theta}$ sending $x\in L$ to the smallest element in its $\Theta$-class is order preserving; the map $\piup_{\Theta}$ sending $x\in L$ to the largest element in its $\Theta$-class is order preserving.
Both $\pidown^{\Theta}$ and $\piup_{\Theta}$ are lattice homomorphisms onto their images such that $\pidown^{\Theta}(L)$ and $\piup_{\Theta}(L)$ are isomorphic lattice quotients of $L$.
The lattice quotient $\pidown^{\Theta}(L)$ is a sub-join-semilattice of $L$,  but not generally a sublattice of $L$.
Similarly, $\piup_{\Theta}(L)$ is a sub-meet-semilattice of $L$.

Below we quote \cite[Proposition~6.3]{shardint}.
In the proposition, a join-irreducible element $j\in L$ is \newword{contracted} by the congruence $\Theta$ if $j$ is congruent to the unique element that it covers.
\begin{proposition}\label{facs cjr}
Suppose that $L$ is a finite join-semidistributive lattice and $\Theta$ is a lattice congruence on $L$ with associated projection map $\pidown^{\Theta}$.
Then, the element $x$ belongs to $\pidown^{\Theta}(L)$ if and only if no canonical joinand of $x$ is contracted by~$\Theta$.
\end{proposition}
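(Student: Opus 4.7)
The plan is to verify both implications by exploiting that $\Theta$ respects meets and joins, together with the bijection $\eta: \covdown(x) \to \can(x)$ from Lemma~\ref{canonical cover}.

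For the forward direction, I would prove the contrapositive. Suppose some $j \in \can(x)$ satisfies $j \equiv_\Theta j_*$, and write $A = \can(x)$. Set $x' := j_* \join \Join(A \setminus \{j\})$. Because $\Theta$ respects joins, $x' \equiv_\Theta x$. By Lemma~\ref{lem: kappa canonical} applied with $F = A \setminus \{j\}$, the fact that $j$ is a canonical joinand of $x = \Join F \join j$ is equivalent to $\Join F \join j > \Join F \join j_*$, that is, to $x > x'$. Thus $x'$ is a strictly smaller element in the same $\Theta$-class as $x$, so $x \ne \pidown^{\Theta}(x)$ and in particular $x \notin \pidown^{\Theta}(L)$.

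For the backward direction, assume that no canonical joinand of $x$ is contracted and suppose for contradiction that $x \ne \pidown^{\Theta}(x)$. Since every $\Theta$-class is an interval, the class of $x$ contains some element strictly below $x$, so we may pick $y \covered x$ with $y \equiv_\Theta x$. Let $j := \eta(y) \in \can(x)$; by Lemma~\ref{canonical cover}, $y \in \K(j)$, which means $j_* \le y$ and $j \not\le y$. Meeting both sides of the congruence $y \equiv_\Theta x = y \join j$ with $j$ gives $y \meet j \equiv_\Theta j$. Because $j$ is join-irreducible and $j \not\le y$, we have $y \meet j \le j_*$. Since $\Theta$-classes are intervals and $y \meet j \le j_* \le j$, it follows that $j_* \equiv_\Theta j$, contradicting the hypothesis that $j$ is not contracted.

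The delicate step is the backward direction, since one must convert the qualitative fact ``$x$ is not the minimum of its $\Theta$-class'' into the contraction of a \emph{specific} join-irreducible. The bijection $\eta$ makes this possible: it singles out the unique canonical joinand $j$ of $x$ for which $y$ sits in $\K(j)$, and the conditions $j_* \le y$, $j \not\le y$ are precisely what is needed to transfer the congruence $y \equiv_\Theta y \join j$ through one application of meet (to locate $y \meet j$ beneath $j_*$) and one application of the interval property (to conclude $j \equiv_\Theta j_*$).
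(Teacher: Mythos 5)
Your proof is correct, and in fact the paper contains no proof of Proposition~\ref{facs cjr} to compare against: it is quoted verbatim from Reading's work on the shard intersection order (\cite[Proposition~6.3]{shardint}). What you supply is a self-contained derivation from the paper's own toolkit, and both implications check out. The forward direction correctly combines the congruence property $j \equiv_\Theta j_*$ $\Rightarrow$ $\Join(A\setminus\{j\})\join j \equiv_\Theta \Join(A\setminus\{j\})\join j_*$ with the second statement of Lemma~\ref{lem: kappa canonical} (applied to $F=\can(x)\setminus\{j\}$, which indeed avoids $j$) to exhibit a strictly smaller congruent element; the backward direction correctly uses the interval property of $\Theta$-classes to find $y\covered x$ with $y\equiv_\Theta x$, and then $\eta$ from Lemma~\ref{canonical cover} to locate the specific joinand that must be contracted. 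Two small remarks. First, your last step can be streamlined: since $y\in\K(j)$ gives $j_*\le y$ and trivially $j_*\le j$, you have $j_*\le y\meet j$, which together with your inequality $y\meet j\le j_*$ yields $y\meet j=j_*$ exactly; so the congruence $y\meet j\equiv_\Theta j$ is literally $j_*\equiv_\Theta j$, and the final appeal to the interval property is unnecessary. Second, your forward direction silently uses that $x\in\pidown^{\Theta}(L)$ is equivalent to $x=\pidown^{\Theta}(x)$; this follows from idempotence of $\pidown^{\Theta}$ (the image consists exactly of the class minima) and is worth one sentence if you write this up formally.
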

Suppose that $x\in \pidown^{\Theta}(L)$.
Since $\pidown^{\Theta}$ is a sub-join-semilattice of $L$, the canonical join representation of $x$ taken in the lattice quotient $\pidown^{\Theta}(L)$ is equal to the canonical join representation taken in $L$.

\begin{corollary}\label{facs cmplx}
Suppose that $L$ is a finite join-semidistributive lattice with canonical join complex $\Delta$, and $\Theta$ is a lattice congruence of $L$.
Then, the canonical join complex of $\pidown^{\Theta}(L)$ is the induced subcomplex of $\Delta$ supported on the set of join-irreducible elements not contracted by $\Theta$.
\end{corollary}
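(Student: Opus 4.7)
The plan is to prove the two complexes agree by verifying first that they have the same vertex set and then that they have the same collection of faces. For the vertex set, I would identify the join-irreducible elements of $\pidown^{\Theta}(L)$ with the set of join-irreducibles $j\in \Irr(L)$ that are not contracted by $\Theta$. Since the canonical join representation of a join-irreducible $j$ in $L$ is just $\{j\}$, Proposition~\ref{facs cjr} gives $j\in \pidown^{\Theta}(L)$ exactly when $j$ itself is not contracted. Combined with the observation immediately preceding the statement of the corollary---that canonical join representations of elements of $\pidown^{\Theta}(L)$ coincide whether computed in $L$ or in $\pidown^{\Theta}(L)$---this shows that the join-irreducibles of $\pidown^{\Theta}(L)$ are precisely the non-contracted join-irreducibles of $L$.

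Face-equality then follows from the same two inputs. On the one hand, if $F$ is a face of $\Delta$ consisting of non-contracted join-irreducibles, then $F$ is the canonical join representation of $\Join F$ in $L$ and no joinand is contracted, so Proposition~\ref{facs cjr} places $\Join F$ inside $\pidown^{\Theta}(L)$; the sub-join-semilattice property then guarantees that $F$ remains the canonical join representation of $\Join F$ inside $\pidown^{\Theta}(L)$, so $F$ is a face of the canonical join complex of the quotient. Conversely, any face $F$ of the canonical join complex of $\pidown^{\Theta}(L)$ has vertices among the non-contracted join-irreducibles (by the vertex identification above), and $F$ canonically joins in $\pidown^{\Theta}(L)$; by the same sub-join-semilattice remark this is also the canonical join representation in $L$, so $F$ is a face of $\Delta$.

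The main difficulty is merely one of bookkeeping: one must keep track of which ambient lattice is being used when invoking ``canonical join representation.'' In principle a set $F$ might join to different elements in $L$ and in $\pidown^{\Theta}(L)$, or join irredundantly in one but not the other; however, the sub-join-semilattice property rules this out exactly for subsets of $\pidown^{\Theta}(L)$, which is precisely the situation that arises here. All substantive content of the corollary is then already encoded in Proposition~\ref{facs cjr} and in the sub-join-semilattice remark, and the proof amounts to lifting these facts from individual canonical join representations to the level of the entire simplicial complex.
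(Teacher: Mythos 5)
Your proposal is correct and follows essentially the same route as the paper, which states Corollary~\ref{facs cmplx} as an immediate consequence of Proposition~\ref{facs cjr} together with the remark that, since $\pidown^{\Theta}(L)$ is a sub-join-semilattice of $L$, canonical join representations of elements of $\pidown^{\Theta}(L)$ coincide whether computed in $L$ or in the quotient. Your two-directional bookkeeping (vertex identification plus face equality) is precisely the intended elaboration of those two inputs, and no further argument is needed.
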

\begin{remark}\label{sublattices}
\normalfont
The canonical join complex of a sublattice $L'$ of $L$ need not be an induced subcomplex of $\Delta$.
In fact, the sets $\Irr(L')$ and $\Irr(L)$ may be disjoint. 
For example, consider the canonical join complex of the sublattice $\{\0,\1\}$ in the boolean lattice $B_n$, where $n>1$.
\end{remark}

\begin{remark}\label{forcing}
\normalfont
In general, not every induced subcomplex of $\Delta$ is the canonical join complex for a lattice quotient of $L$.
Each lattice congruence is determined by the set of join-irreducible elements that it contracts.
But, a given collection of join-irreducible elements may not correspond to a lattice congruence.
For $j$ and $j'$ in $\Irr(L)$, we say that $j$ \newword{forces} $j'$ if every congruence that contracts $j$ also contracts $j'$.
In $N_5$ pictured in Figure~\ref{pentagon} both $a$ and $b$ force $c$.
So, for example, there is no quotient of $N_5$ whose canonical join complex is the subcomplex induced by $\{b, c\}$.
\end{remark}

\begin{figure}[h]
  \centering
   \scalebox{1}{ \includegraphics{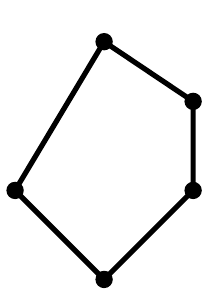}}
   \begin{picture}(0,50)(0,50)
   \put(-2,82){$b$}
   \put(-2,106){$c$}
   \put(-68.5,82){$a$}
   \end{picture}
   \qquad \qquad
      \scalebox{.95}{ \includegraphics{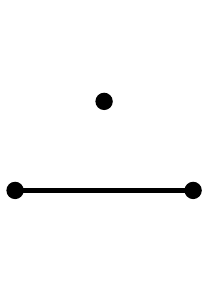}
   \begin{picture}(0,50)(0,50)
   \put(-9.5,65){$b$}
   \put(-27,95){$c$}
   \put(-60.5,65){$a$}
   \end{picture}}
     \caption{The pentagon lattice $N_5$ and its canonical join complex.}
        \label{pentagon}
\end{figure}

\subsection{Products and sums}
In the following easy propositions, we construct new semidistributive lattices from old ones, and give the corresponding construction for the canonical join complex.
\begin{proposition}\label{join}
Suppose that $L_1$ and $L_2$ are finite, join-semidistributive lattices with corresponding canonical join complex $\Delta_i$ for $i=1,2$.
Then the canonical complex for $L_1\times L_2$ is the join $\Delta_1\ast \Delta_2$.
\end{proposition}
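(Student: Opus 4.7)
The plan is to identify the join-irreducible elements of $L_1 \times L_2$, verify join-semidistributivity of the product, describe the canonical join representation of an element $(w_1, w_2)$ in terms of the canonical join representations of its components, and then read off the simplicial join structure.

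First, I would observe that the join-irreducibles of $L_1 \times L_2$ split naturally into two disjoint copies: the map $\iota_1\colon j \mapsto (j, \0_2)$ embeds $\Irr(L_1)$ into $\Irr(L_1 \times L_2)$, the map $\iota_2\colon j \mapsto (\0_1, j)$ embeds $\Irr(L_2)$, and every join-irreducible of $L_1 \times L_2$ has this form (since joins in the product are componentwise, an element with both coordinates nonzero is the join of its two ``axis projections''). Next, since $(SD_\join)$ involves only joins and meets, which are computed componentwise in $L_1 \times L_2$, the product is join-semidistributive as soon as each factor is.

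The heart of the argument is to show that for $w = (w_1, w_2)$ in $L_1 \times L_2$,
\[
\can(w) \;=\; \iota_1(\can(w_1)) \,\cup\, \iota_2(\can(w_2)).
\]
Call the right-hand side $A$. Using that joins are componentwise, $\Join A = (w_1, w_2)$, and irredundancy of each $\can(w_i)$ transfers to irredundancy of $A$ (removing a generator in the image of $\iota_i$ strictly decreases the $i\th$ coordinate). For the join-refinement property, suppose $B$ is any irredundant join-representation of $(w_1, w_2)$. The coordinate projections send $B$ to subsets $B_i \subseteq L_i$ whose join is $w_i$; hence $\can(w_i)$ join-refines $B_i$, and combining the two refinements shows that $A$ join-refines $B$. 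Thus $A = \can(w)$.

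Finally, I would conclude by unpacking the definition of the canonical join complex of $L_1 \times L_2$: a subset $F \subseteq \Irr(L_1 \times L_2)$ is a face precisely when $F = \iota_1(F_1) \cup \iota_2(F_2)$ with $F_i$ a face of $\Delta_i$ (possibly empty). Indeed, such an $F$ realizes the canonical join representation of $(\Join F_1, \Join F_2)$ by the formula above, and conversely every face of the canonical join complex must intersect each $\iota_i(\Irr(L_i))$ in a face of $\Delta_i$ by applying the formula in reverse. This is exactly the description of the simplicial join $\Delta_1 * \Delta_2$. The main obstacle, which is really a matter of bookkeeping rather than difficulty, is the join-refinement step showing that projecting an irredundant representation down each coordinate yields something that $\can(w_i)$ refines; everything else is formal from the componentwise nature of the lattice operations.
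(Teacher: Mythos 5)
Your proof is correct; the paper states Proposition~\ref{join} without proof, as one of several ``easy propositions,'' and your componentwise argument (identifying $\Irr(L_1\times L_2)$ with the disjoint union $\iota_1(\Irr(L_1))\uplus\iota_2(\Irr(L_2))$ and showing $\can((w_1,w_2))=\iota_1(\can(w_1))\cup\iota_2(\can(w_2))$) is exactly the intended one. The only step worth making explicit is in your join-refinement argument: the projected sets $B_i$ need not be irredundant or antichains, so you are invoking the standard fact that $\can(w_i)$ join-refines \emph{every} join-representation of $w_i$, which follows from the paper's definition by first passing to an irredundant subrepresentation $B_i'\subseteq B_i$ with $\Join B_i'=w_i$.
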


The \newword{ordinal sum} of lattices $L_1$ and $L_2$ written $L_1\oplus L_2$ is the lattice whose set of elements is the disjoint union $L_1 \uplus L_2$, ordered as follows:
$x\le y$ if and only if $x\le y$ in $L_i$, for $i=1,2$, or $x\in L_1$ and $y\in L_2$.

\begin{proposition}\label{ordinal sum}
Suppose that $L_1$ and $L_2$ are finite, join-semidistributive lattices with corresponding canonical join complex $\Delta_i$, for $i=1,2$.
Then the canonical join complex of $L_1 \oplus L_2$ is equal to the disjoint union $\Delta_1 \uplus \Delta_2 \uplus \{v\}$, in which the vertex $v$ corresponds to the minimal element of $L_2$.
\end{proposition}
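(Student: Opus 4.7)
The plan is to directly compute all canonical join representations in $L_1 \oplus L_2$ and verify that the resulting simplicial complex has the claimed disjoint-union structure. The key observation is that joins in an ordinal sum are almost trivial: for any nonempty $S \subseteq L_1 \oplus L_2$, if $S$ meets $L_2$ then $\Join S$ equals the join (taken in $L_2$) of $S \cap L_2$, and otherwise $\Join S$ equals the join taken in $L_1$. From this I would first identify the join-irreducibles of $L_1 \oplus L_2$: they are exactly the elements of $\Irr(L_1) \cup \Irr(L_2)$, together with the bottom element $\hat{0}_2$ of $L_2$. The point is that $\hat{0}_2$, although it is the empty join in $L_2$, becomes join-irreducible in the ordinal sum because $\hat{1}_1$ is its unique lower cover there.

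Next I would classify $\can(w)$ case by case. For $w \in L_1$, any join representation of $w$ must be contained in $L_1$, so $\can(w)$ coincides with $\can_{L_1}(w)$. For $w \in L_2 \setminus \{\hat{0}_2\}$, any irredundant join representation of $w$ is an antichain, and since every element of $L_1$ is strictly below every element of $L_2$, no element of $L_1$ can appear in such an antichain whose join is $w$; hence $\can(w) = \can_{L_2}(w)$. Finally, for $w = \hat{0}_2$, the element is join-irreducible in $L_1 \oplus L_2$, so $\can(w) = \{\hat{0}_2\}$. In particular, $L_1 \oplus L_2$ is join-semidistributive by Theorem~\ref{join_semi_cjr}, so the canonical join complex is well-defined.

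Having determined the canonical join representations, I would note that no face of the canonical join complex can mix join-irreducibles from different ``parts'' of the ordinal sum: any such face is an antichain, but every element of $\Irr(L_1)$ lies strictly below every element of $\Irr(L_2) \cup \{\hat{0}_2\}$, and $\hat{0}_2$ is comparable to every element of $\Irr(L_2)$. Thus every face is either a face of $\Delta_1$, a face of $\Delta_2$, or the singleton $\{\hat{0}_2\}$, and conversely each such set is a face by the case analysis of the previous paragraph. This gives the identification with $\Delta_1 \uplus \Delta_2 \uplus \{v\}$. The only subtle point—and the main thing to be careful about—is the treatment of $\hat{0}_2$: it is not join-irreducible in $L_2$ (so it is not a vertex of $\Delta_2$), yet it contributes the extra isolated vertex $v$ in the ordinal sum because of the new cover relation introduced below it.
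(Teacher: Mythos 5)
Your proof is correct. In fact the paper states Proposition~\ref{ordinal sum} without any proof (it is introduced among the ``easy propositions''), so there is no argument of the paper's to compare against; your direct computation is exactly the intended one, and it supplies the omitted details accurately: joins in $L_1 \oplus L_2$ restrict to the appropriate summand, $\Irr(L_1\oplus L_2)=\Irr(L_1)\uplus\Irr(L_2)\uplus\{\hat{0}_2\}$, canonical join representations are computed summand-by-summand (whence join-semidistributivity of $L_1\oplus L_2$ via Theorem~\ref{join_semi_cjr}), and no face can mix the summands because faces are antichains. You also isolate the one genuinely delicate point correctly: $\hat{0}_2$ is not a vertex of $\Delta_2$, but in the ordinal sum it acquires $\hat{1}_1$ as its unique lower cover, becomes join-irreducible with $\can(\hat{0}_2)=\{\hat{0}_2\}$, and is comparable to every element of $\Irr(L_2)$, which is precisely why it contributes the isolated vertex $v$.
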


We define the \newword{wedge sum} $L_1 \wedgesum L_2$ to be the lattice quotient of the ordinal sum $L_1 \oplus L_2$ in which the minimal element of $L_2$ is identified with the maximal element of $L_1$.
(Our nonstandard terminology is inspired by the wedge sum of topological spaces.)

\begin{proposition}\label{wedge sum}
Suppose that $L_1$ and $L_2$ are finite, join-semidistributive lattices with corresponding canonical join complex $\Delta_i$, for $i=1,2$.
Then the canonical join complex of $L_1 \wedgesum L_2$ is equal to the disjoint union $\Delta_1 \uplus \Delta_2$.
\end{proposition}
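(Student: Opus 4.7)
The plan is to derive Proposition~\ref{wedge sum} from the immediately preceding Proposition~\ref{ordinal sum} by realizing $L_1\wedgesum L_2$ as a lattice quotient of $L_1\oplus L_2$ and then invoking Corollary~\ref{facs cmplx}. By definition, $L_1\wedgesum L_2$ is the quotient of the ordinal sum by the congruence $\Theta$ generated by $\hat{1}_{L_1}\equiv\hat{0}_{L_2}$, so the task reduces to determining which join-irreducibles of $L_1\oplus L_2$ are contracted by $\Theta$.

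First I would verify that the only non-singleton $\Theta$-class is $\{\hat{1}_{L_1},\hat{0}_{L_2}\}$. This is a short direct calculation: in $L_1\oplus L_2$ one has $a\meet\hat{1}_{L_1}=a=a\meet\hat{0}_{L_2}$ for every $a\in L_1$, and dually $b\join\hat{1}_{L_1}=b=b\join\hat{0}_{L_2}$ for every $b\in L_2$, so meeting or joining the generating relation with any element forces no new identifications. Consequently, $\pidown^{\Theta}(L_1\oplus L_2)$ is isomorphic to $L_1\wedgesum L_2$.

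Next I would identify $\Irr(L_1\oplus L_2)$. Cover relations in the ordinal sum are exactly those of $L_1$ and of $L_2$ together with the single new cover $\hat{0}_{L_2}\covers\hat{1}_{L_1}$, and it follows that $\Irr(L_1\oplus L_2)=\Irr(L_1)\sqcup\Irr(L_2)\sqcup\{\hat{0}_{L_2}\}$, where the final element is precisely the extra vertex $v$ of Proposition~\ref{ordinal sum}. Since the only nontrivial $\Theta$-class is $\{\hat{1}_{L_1},\hat{0}_{L_2}\}$, a join-irreducible $j$ can satisfy $j\equiv_\Theta j_*$ only when $\{j,j_*\}=\{\hat{1}_{L_1},\hat{0}_{L_2}\}$, which forces $j=\hat{0}_{L_2}=v$. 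Hence $v$ is the unique contracted join-irreducible.

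Finally, Proposition~\ref{ordinal sum} gives the canonical join complex of $L_1\oplus L_2$ as $\Delta_1\uplus\Delta_2\uplus\{v\}$, and Corollary~\ref{facs cmplx} identifies the canonical join complex of $L_1\wedgesum L_2$ with the subcomplex induced on the non-contracted vertices. Deleting $v$ yields $\Delta_1\uplus\Delta_2$, as claimed. The main delicate step is the first one, verifying that $\Theta$ introduces no unintended identifications; once this is in hand, the remainder of the argument is bookkeeping against the two cited results.
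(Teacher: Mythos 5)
Your proof is correct. The paper actually states Proposition~\ref{wedge sum} (like Propositions~\ref{join} and~\ref{ordinal sum}) without proof, offering it as one of several ``easy propositions,'' so there is no written argument to compare against; your route---realizing $L_1\wedgesum L_2$ as $\pidown^{\Theta}(L_1\oplus L_2)$ for the congruence $\Theta$ whose unique nontrivial class is $\{\hat{1}_{L_1},\hat{0}_{L_2}\}$, then combining Proposition~\ref{ordinal sum} with Corollary~\ref{facs cmplx}---is the natural one, since the paper defines the wedge sum precisely as this lattice quotient of the ordinal sum. Your key steps all check out: $\Irr(L_1\oplus L_2)=\Irr(L_1)\uplus\Irr(L_2)\uplus\{\hat{0}_{L_2}\}$; a contracted join-irreducible $j$ satisfies $j\covers j_*$ with $j\equiv_{\Theta}j_*$, which forces $j=\hat{0}_{L_2}$ and $j_*=\hat{1}_{L_1}$, so $v=\hat{0}_{L_2}$ is the unique contracted vertex; and deleting the isolated vertex $v$ from $\Delta_1\uplus\Delta_2\uplus\{v\}$ leaves $\Delta_1\uplus\Delta_2$. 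Two small points are worth making explicit. First, your congruence verification as written only covers the cases where the two outputs coincide; you should also record that for $a\in L_1$ one gets $a\join\hat{1}_{L_1}=\hat{1}_{L_1}$ versus $a\join\hat{0}_{L_2}=\hat{0}_{L_2}$ (and dually $b\meet\hat{1}_{L_1}$ versus $b\meet\hat{0}_{L_2}$ for $b\in L_2$), where the outputs differ but lie in the single identified class---this is exactly what your phrase ``no new identifications'' is silently using. Second, Corollary~\ref{facs cmplx} requires $L_1\oplus L_2$ to be a finite join-semidistributive lattice; this is implicit in citing Proposition~\ref{ordinal sum} (whose statement presupposes that the canonical join complex of the ordinal sum is defined, equivalently that \ref{jsd} holds, by Theorem~\ref{join_semi_cjr}), but a one-line case check of \ref{jsd} in the ordinal sum would make the application airtight. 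Neither point is a genuine gap; the alternative would have been a direct computation of canonical join representations in $L_1\wedgesum L_2$ via Lemma~\ref{canonical cover}, which your quotient argument cleanly avoids.
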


\subsection{Day's doubling construction}
A subset $C$ of $L$ is \newword{order-convex} if for each $x, y\in C$ with $x\le y$, we have that the interval $(x,y)$ belongs to $C$.
Suppose that $C\subseteq L$ is order convex, and let $\2$ be the two element chain $0<1$.
We write $X$ for the set of elements $x\in L$ such that $x\ge c$ for some $c\in C$.
Define $L[C]$ to be the following induced subposet of $L\times \2$:
\[[((L\setminus X)\cup C)\times 0] \uplus (X\times 1)\]
We say that $L[C]$ is obtained by \newword{doubling} $L$ with respect to $C$.
This procedure, due to Day \cite{day2}, is defined more generally for all posets.
If $L$ is a lattice, then $L[C]$ is a lattice and the map $\pi_C: L[C] \to L$ given by $(x,\epsilon) \mapsto x$ is a surjective lattice homomorphism (see \cite{day2} or \cite[Lemma~6.1]{crosscut}).
In the next proposition, we show that when $C$ is an interval in $L$, doubling $L$ with respect to $C$ also preserves semidistributivity.

\begin{proposition}\label{doubling}
Suppose that $L$ is a finite semidistributive lattice, $I=[a,b]$ is an interval in $L$, and write $\E$ for the edge set of the canonical join graph for $L$.
Then $L[I]$ is semidistributive, and the canonical join graph for $L[I]$ has edge set \[ \E' \uplus \left\{ \{(j,0),(a,1)\}: j \in \can(w) \text{ for } w\in I\text{ and } j \not\le a\right\},\]
where $\E'$ is the set of pairs $\{(j,\epsilon), (j',\epsilon')\}$, such that $\{j,j'\}\in \E$, and $(j,\epsilon)$ and $(j',\epsilon')$ are the minimal elements of the fibers $\pi_{I}^{-1}(j)$ and $\pi_{I}^{-1}(j')$, respectively.

\end{proposition}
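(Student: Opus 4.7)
The plan has two main parts: first, to verify that $L[I]$ is semidistributive by analyzing its join-irreducibles; second, to describe the canonical join graph. For the latter, I will split off the edges in $\E'$, which fall out of Corollary~\ref{facs cmplx} applied to a natural congruence, from the new edges at $(a,1)$, which require direct computation via Lemma~\ref{canonical cover}.

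I would first compute the cover relations in $L[I]$ and classify its join-irreducibles. The classification is that $\Irr(L[I])$ consists of (i) $(j,0)$ for $j \in \Irr(L) \cap ((L \setminus X) \cup I)$, (ii) $(j,1)$ for $j \in \Irr(L) \cap (X \setminus I)$, and (iii) the element $(a,1)$, whose unique lower cover is $(a,0)$ regardless of whether $a \in \Irr(L)$. In particular, $\Irr(L[I])$ is in bijection with $\Irr(L) \sqcup \{(a,1)\}$, and for each $j \in \Irr(L)$ the corresponding element of $\Irr(L[I])$ is the minimum element of the fiber $\pi_I^{-1}(j)$. To verify semidistributivity I would invoke Proposition~\ref{kappa} and its dual, checking in each case that $\kappa$ and $\kappa_*$ exist: for $(j,\epsilon)$ coming from $\Irr(L)$, $\kappa$ lifts $\kappa(j) \in L$ to the appropriate level, and for the new element $(a,1)$ one computes $\kappa((a,1)) = (b,0)$.

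For the edges not incident to $(a,1)$, I would use Corollary~\ref{facs cmplx}. The fibers of $\pi_I$ form a lattice congruence $\Theta$ on $L[I]$ whose only contracted join-irreducible is $(a,1)$, and the projection $\pidown^{\Theta}$ realizes $L$ as the sub-join-semilattice of minimum fiber elements of $L[I]$. Corollary~\ref{facs cmplx} then identifies the induced subcomplex of the canonical join complex of $L[I]$ on $\Irr(L[I]) \setminus \{(a,1)\}$ with the canonical join complex of $L$, pulled back via the minimum-of-fiber bijection. This yields exactly the edge set $\E'$.

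The remaining task, which I expect to be the main obstacle, is to identify the edges incident to $(a,1)$. By Lemma~\ref{canonical cover}, $(a,1)$ appears as a canonical joinand only of elements $(x,1)$ with $x \in I \setminus \{a\}$: the lower cover $(x,0)$ of $(x,1)$ satisfies $\eta((x,0)) = (a,1)$, because any minimal join-irreducible whose join with $(x,0)$ equals $(x,1)$ must lie at level $1$ with projection at most $a$ (the other level-$1$ join-irreducibles are $(j',1)$ with $j' \in X \setminus I$, but $j' \le x \le b$ would contradict $j' \not\in I$). The other lower covers of $(x,1)$ are $(y,1)$ for $y \in I \cap \covdown(x)$, and one verifies $\eta((y,1)) = (\eta_L(y),0)$, where $\eta_L(y) \in \can(x)$ is the canonical joinand of $x$ corresponding to $y$ in $L$. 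Matching this with the proposition requires two directions. Directly, $\eta_L(y) \not\le a$, since otherwise $\eta_L(y) \vee y \le y < x$ contradicts $\eta_L(y) \vee y = x$. Conversely, given $j \in \can(x)$ for $x \in I \setminus \{a\}$ with $j \not\le a$, the lower cover $y = \eta_L^{-1}(j)$ must satisfy $y \ge a$: if not, then $y \vee a = y \vee j = x$, and \ref{jsd} forces $y \vee (a \wedge j) = x$, contradicting the minimality of $j = \eta_L(y)$ because $a \wedge j < j$.
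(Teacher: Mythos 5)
Your proposal is correct, and it takes a genuinely different route from the paper for most of the argument. The paper proves everything in one pass by verifying condition~(1) of Proposition~\ref{converse} for every element of $L[I]$: for $(w,\epsilon)\notin I\times 1$ it lifts each label $\eta(y)$ to the minimum of its fiber, and for $(w,1)\in I\times 1$ it shows that the labels on the covers $(y,1)$, $y\in\covdown(w)\cap I$, are exactly the canonical joinands of $w$ not weakly below $a$, while $(a,1)$ is the label on the cover $(w,0)$; join-semidistributivity then comes from Proposition~\ref{converse}, meet-semidistributivity from the dual argument, and the edge set from Proposition~\ref{canonical joinands}. You instead (i) classify $\Irr(L[I])$ and get semidistributivity from Proposition~\ref{kappa} and its dual by exhibiting $\kappa$ and $\kappa_*$ explicitly --- your computations check out: for a lifted join-irreducible one finds $\kappa$ is the lift of $\kappa(j)$ to the top of its fiber, and $\K((a,1))=I\times 0$ gives $\kappa((a,1))=(b,0)$ --- and (ii) obtain $\E'$ by applying Corollary~\ref{facs cmplx} to the congruence given by the fibers of $\pi_I$, whose only contracted join-irreducible is $(a,1)$. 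Step (ii) is a slicker argument than the paper's direct lifting of labels; its cost is that semidistributivity of $L[I]$ must be established first, and you correctly order the steps that way. (The paper's approach has the compensating advantage, noted there, of producing the canonical join representation of every element along the way, which feeds Proposition~\ref{consequences}; your route recovers the same data through the congruence identification plus the $I\times 1$ analysis.) Your treatment of the edges at $(a,1)$ is essentially the paper's second case, with the same two key claims; the one divergence is that where the paper deduces $\eta(y)\le a$ for $y\in\covdown(w)\setminus I$ directly from minimality of $\eta(y)$ (since $y\join a=w$), you route through \ref{jsd}, and both work. Two small points to tidy: the phrase ``level $1$ with projection at most $a$'' should read ``at least $a$'' (the point, which your parenthetical does establish, is that $(a,1)$ is the only level-$1$ join-irreducible below $(x,1)$), and the ``only'' half of your claim about where $(a,1)$ occurs as a canonical joinand deserves a sentence: by Lemma~\ref{canonical cover} the witnessing lower cover lies in $\K((a,1))=I\times 0$, so the element in question is $(y,0)\join(a,1)=(y,1)\in I\times 1$, ruling out $(w,1)$ with $w\in X\setminus I$.
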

In the proof below we check that $L[I]$ satisfies~(\ref{labeling}) from Proposition~\ref{converse} (and the obvious dual argument gives meet-semidistributivity).
One can also verify semidistributivity directly for $L[I]$ using \cite[Lemma~6.1]{crosscut}.
Our approach has the advantage of giving the canonical join representation of each element of $L[I]$.
In either case, the argument is tedious but, at least, elementary. 

\begin{proof}
Suppose that $(w,\epsilon)$ is not in $I\times 1$, where $\epsilon = 0,1$.
Observe that the map $\pi_I: (y,\epsilon')\mapsto y$ is a bijection from $\covdown((w,\epsilon))$ to $\covdown(w)$.
For each $y\in \covdown(w)$, write $\eta(y)$ for the unique minimal element of $L$ satisfying $y\join \eta(y)=w$, and $(y,\epsilon')$ for the corresponding element in $\covdown((w,\epsilon))$.
Let $(\eta(y), \epsilon'')$ be the minimal element of the fiber $\pi_{I}^{-1}(\eta(y))$ in $L[I]$.
We claim  that $(\eta(y),\epsilon'') \join (y,\epsilon') = (w, \epsilon)$.
If $\epsilon=0$, the claim is immediate, and if $\epsilon=1$ then the claim follows from the fact that $(w,0)\not \in L[I]$.
It is straightforward, using the surjection $\pi_{I}$, to check that $(\eta(x), \epsilon'')$ is the unique minimal element of $L[C]$ whose join with $(x,\epsilon')$ is equal to $(y, \epsilon)$.


Suppose that $(w,1)\in I\times 1$.
If $w=a$, it is immediate that $(w,1)$ satisfies condition~(\ref{labeling}) of Proposition~\ref{converse}.
So we assume that $w>a$.
Observe that the lower covers of $(w,1)$ are $(y,1)$ such that $y\in \covdown(y)\cap I$ and $(w,0)$.
For each $y\in \covdown(w)\cap I$, we claim that the set $\{\eta(y): y\in \covdown(w)\cap I\}$ is precisely the set of canonical joinands of $w$ that are not weakly below $a$.
If $y\in \covdown(w)\setminus I$, then $y\join a = w$.
By minimality of $\eta(y)$, we conclude that $\eta(y)\le a$.
If $y\in \covdown(w)\cap I$ and $\eta(y)\le a$, then $\eta(y)\join y = y$, which is a contradiction.
The claim follows.
As above, it is straightforward to check that $(\eta(y), 0)$ is the unique minimal element in $L[I]$ whose join with $(y,1)$ is equal to $(w,1)$, for each $y\in \covdown(w)\cap I$.

Suppose that $(w',\epsilon')\join (w,0)= (w,1)$, where $\epsilon'\in \{0,1\}$.
Then $\epsilon'=1$, and we have $w'\ge a$.
Therefore, $(a, 1)$ is the unique minimal element whose join with $(w,0)$ is equal to $(w,1)$.
Proposition~\ref{converse} says that $L$ is join-semidistributive.
The second statement follows from Proposition~\ref{canonical joinands}.
\end{proof}


Below we gather some useful facts that follow immediately from the proof of Proposition~\ref{doubling}.
\begin{proposition}\label{consequences}
Suppose that $L$ is a finite semidistributive lattice, $I =[a,b]$ is an interval in $L$, and $j\in \Irr(L)$ such that $j\ne a$.
For each $w\in L$ and $\epsilon, \epsilon'\in \{0,1\}$ the following statements hold:
\begin{enumerate}
\item If $(j,\epsilon)$ is a canonical joinand of $(w,\epsilon')$ in $L[I]$, then $j$ is a canonical joinand of $w$.
\item If $(j,\epsilon)$ is a canonical joinand of $(w,\epsilon')\in I\times \2$ then $\epsilon=0$.
\item If $(j,\epsilon)$ is a canonical joinand of $(w,0)\in I\times 0$ and $j\not\le a$, then $(j,\epsilon)$ is also a canonical joinand of $(w,1)$.
\item $(w,\epsilon')$ has $(a,1)$ as canonical joinand if and only if $(w,\epsilon')\in I\times 1$.
\end{enumerate}
\end{proposition}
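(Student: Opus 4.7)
The plan is to extract all four statements from the explicit description of canonical joinands in $L[I]$ produced inside the proof of Proposition~\ref{doubling}. That proof splits into two cases: when $(w,\epsilon')\not\in I\times 1$, the canonical joinands of $(w,\epsilon')$ are exactly the minima of the fibers $\pi_{I}^{-1}(\eta(y))$ as $y$ ranges over $\covdown(w)$; when $(w,1)\in I\times 1$, the canonical joinands are $(\eta(y),0)$ for $y\in\covdown(w)\cap I$, together with $(a,1)$.

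Statement~(1) will be immediate: discarding $(a,1)$ by the hypothesis $j\neq a$, the first coordinate of every canonical joinand $(j,\epsilon)$ has the form $\eta(y)$ for some $y\in\covdown(w)$, and Lemma~\ref{canonical cover} records that such an $\eta(y)$ belongs to $\can(w)$. For statement~(2) I would split on the two cases. When $(w,\epsilon')\in I\times 1$, the description above together with $j\neq a$ leaves only joinands of the form $(\eta(y),0)$, so $\epsilon=0$. When $(w,\epsilon')\in I\times 0$, Case~A applies, and here I would use the bound $\eta(y)\leq w\leq b$, which rules out $\eta(y)\in X\setminus I$; thus the fiber $\pi_{I}^{-1}(\eta(y))$ is either $\{(\eta(y),0)\}$ or $\{(\eta(y),0),(\eta(y),1)\}$, and in both cases its minimum has second coordinate~$0$.

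Statement~(3) is the one I expect to require the most care. Case~B only lists canonical joinands of $(w,1)$ coming from $y\in\covdown(w)\cap I$ (plus $(a,1)$), so the task is to show that a canonical joinand $(j,\epsilon)$ of $(w,0)$ with $j\not\leq a$ must correspond to an element $y\in\covdown(w)\cap I$ rather than to one in $\covdown(w)\setminus I$. The proof of Proposition~\ref{doubling} already records the needed fact: for $y\in\covdown(w)\setminus I$ one has $y\join a=w$, and by minimality of $\eta(y)$ this forces $\eta(y)\leq a$. Hence the hypothesis $j=\eta(y)\not\leq a$ forces $y\in\covdown(w)\cap I$; combined with the conclusion $\epsilon=0$ from~(2), this places $(j,0)$ on the explicit list of canonical joinands of $(w,1)$.

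Finally, for statement~(4), the ``if'' direction with $w>a$ is part of the Case~B computation in the proof of Proposition~\ref{doubling}, and for $w=a$ the element $(a,1)$ has $(a,0)$ as its unique lower cover in $L[I]$, so it is join-irreducible and hence its own canonical joinand. For the ``only if'' direction I would use Case~A: the fiber of $a$ is $\{(a,0),(a,1)\}$ with minimum $(a,0)$, so $(a,1)$ is never a fiber minimum and therefore cannot arise as a canonical joinand of any element outside $I\times 1$.
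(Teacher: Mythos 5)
Your proposal is correct and follows exactly the route the paper intends: the paper offers no separate argument, stating only that these facts ``follow immediately from the proof of Proposition~\ref{consequences}'s companion, Proposition~\ref{doubling},'' and your case analysis (fiber minima over $\eta(y)$ when $(w,\epsilon')\notin I\times 1$; the list $(a,1)$ together with $(\eta(y),0)$ for $y\in\covdown(w)\cap I$ when $(w,1)\in I\times 1$) is precisely that unwinding. All four deductions check out, including the two points needing care: that $\eta(y)\le w\le b$ rules out fibers of the form $\{(\eta(y),1)\}$ in statement~(2), and that $\eta(y)\le a$ for $y\in\covdown(w)\setminus I$ forces the joinand in statement~(3) to come from $\covdown(w)\cap I$.
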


A lattice is \newword{congruence uniform} if it is obtained from the one element lattice by a finite sequence of doublings of intervals.
Suppose that $L$ is a finite congruence uniform lattice.
Proposition~\ref{doubling} says that after each iteration of the doubling procedure, the resulting lattice has exactly one additional join-irreducible element, namely $(a,1)$, where $a$ is the smallest element of the interval that is doubled.
Thus the canonical join graph of each congruence uniform lattice $L$ has a natural labeling, in which the vertex labeled $i$ is the join-irreducible element that is added in the $i^{\th}$ step of the doubling sequence for $L$.

\begin{remark}\label{warning!}
\normalfont
Non-isomorphic congruence uniform lattices may have the same labeled canonical join graphs.
For example, doubling the boolean lattice $B_2$ with respect to any singleton interval $I = \{x\}$, for $x\in B_2$, results in the labeled canonical join graph depicted in Figure~\ref{join graph} below.
When $x$ is equal to $\0$ or $\1$, we obtain the ordinal sums $B_0\oplus B_2$ and $B_2 \oplus B_0$, respectively.
When $x$ is either join-irreducible element of $B_2$, the resulting lattice is isomorphic to $N_5$ from Figure~\ref{pentagon}.
\end{remark}
\begin{figure}[h]
  \centering
   \scalebox{.75}{ \includegraphics{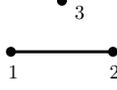}
   \begin{picture}(0,50)(0,50)
   \put(-9.5,65){$2$}
   \put(-27,95){$3$}
   \put(-60.5,65){$1$}
   \end{picture}}
     \caption{The canonical labeled join graph of three non-isomorphic congruence uniform lattices.}
        \label{join graph}
\end{figure}

We conclude this subsection with some examples of labeled and unlabeled graphs that can realized as the canonical join graph for some congruence uniform lattice.


\begin{example}[Complete graphs]\label{complete graphs}
\normalfont
In our first example we consider the complete graph $K_n$ on $n$ vertices, which can be realized as the canonical join graph for the boolean lattice $B_n$.
In fact, the boolean lattice is the only lattice whose canonical join graph is $K_n$.
\begin{proposition}\label{boolean}
Supposed that $L$ is a finite semidistributive lattice with canonical join graph equal to the complete graph $K_n$.
Then, $L$ is isomorphic to $B_n$.
\end{proposition}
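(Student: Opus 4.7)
The plan is to use flagness of the canonical join complex (Theorem~\ref{flag}), together with the uniqueness of canonical join representations, to show that $L$ is in bijection with the power set of $\Irr(L)$ in an order-preserving way.

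First I would observe that since $L$ is semidistributive, Theorem~\ref{flag} tells us the canonical join complex $\Delta$ is flag. Its 1-skeleton is $K_n$ by hypothesis, so $\Delta$ must be the full $(n-1)$-simplex on $\Irr(L) = \{j_1, \ldots, j_n\}$. That is, \emph{every} subset $S \subseteq \Irr(L)$ joins canonically.

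Next I would define the map $\phi : B_n \to L$ by $\phi(S) = \Join_{i \in S} j_i$, where $B_n$ is identified with the power set of $[n]$. Since each element $w \in L$ has a unique canonical join representation (and this representation must be a subset of $\Irr(L)$), and since distinct canonical join representations represent distinct elements, the map $\phi$ is a bijection: injectivity follows from uniqueness of the canonical join representation, while surjectivity follows from the existence of canonical join representations for each element of $L$. In particular, $|L| = 2^n$.

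Finally I would verify that $\phi$ is an order isomorphism, which automatically promotes to a lattice isomorphism. The forward direction $S \subseteq T \Rightarrow \phi(S) \le \phi(T)$ is immediate from the definition. For the reverse direction, suppose $\phi(S) \le \phi(T)$. Then
\[
\phi(S \cup T) = \Join_{i \in S \cup T} j_i = \phi(S) \join \phi(T) = \phi(T),
\]
and since $\phi$ is injective, $S \cup T = T$, i.e., $S \subseteq T$. Hence $\phi$ is an isomorphism of posets, and therefore of lattices, giving $L \cong B_n$.

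There is no real obstacle here; the argument is essentially a bookkeeping exercise once the flagness of $\Delta$ is invoked. The only subtle point is recognizing that a complete 1-skeleton together with the flag property forces the complex to be a full simplex, and hence every subset of join-irreducibles appears as a canonical join representation — this is precisely what makes the bijection with the boolean lattice work.
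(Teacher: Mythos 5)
Your proof is correct and takes essentially the same route as the paper: flagness (Theorem~\ref{flag}) forces the canonical join complex to be the full simplex on $\Irr(L)$, and then uniqueness and existence of canonical join representations give a bijection with the power set that is checked to be an order isomorphism. The only cosmetic difference is in the order-reflection step, where you use injectivity of $\phi$ via $\phi(S\cup T)=\phi(T)$, while the paper argues directly from irredundancy that $j_k\not\le x_{S'}$ for $k\in S\setminus S'$; the two arguments are interchangeable.
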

\begin{proof}
Write $x_S$ for the element with canonical join representation $\Join (\{j_i: i\in S\}$, where $S\subseteq [n]=\{1,2\ldots,n\}$.
Suppose that $x_S \le x_{S'}$ for some $S'\subseteq [n]$, and there exists $k\in S$ that is not in $S'$.
Since $j_k \join \Join (\{j_i: i\in S'\}$ is a canonical join representation, in particular this join is irredundant.
So, $j_k \not \le \Join (\{j_i: i\in S'\} = x_{S'}$, and that is a contradiction.
Therefore, the map $x_S\mapsto S$ is order preserving.
It is immediate that the inverse map is order preserving.
\end{proof}
\end{example}

\begin{example}[Chordal graphs]
\normalfont
Similar to the construction of the complete graph (as a labeled canonical join graph), one can construct certain chordal graphs as the canonical join graph for a congruence uniform lattice.
In the construction, each doubling with respect to some interval $I$ has $I\times \2$ isomorphic to a boolean lattice.

Suppose that $G$ is a graph. 
The \newword{closed neighborhood} $N[v]$ is the subgraph of $G$ induced by the set of vertices $v'$ adjacent to $v$, together with $v$.
The \newword{open neighborhood} $N(v)$ is the subgraph induced by the set of vertices $v'$ adjacent to $v$.
A \newword{perfect elimination ordering} for $G$ is linear ordering $v_1<v_2<\cdots<v_n$ of the vertices of $G$ such that for each $i=1,2,\ldots, n$, the intersection of $N[v_i]$ with the set $\{v_i, v_{i+1}, \ldots, v_n\}$ is a clique in $G$.
Recall that a graph $G$ is \newword{chordal} if and only if it has a perfect elimination ordering.

\begin{proposition}
Suppose that $G$ is a labeled graph such that $\mathcal{L}=v_n<v_{n-1}<\ldots < v_1$ is a perfect elimination ordering.
If $N(v_{i+1}) \subseteq N(v_{i})$ for each $i\in [n-1]$, then there exists a congruence uniform lattice $L$ such that $G$ is its labeled canonical join graph.
\end{proposition}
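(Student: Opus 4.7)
The plan is to induct on $n$. For the base case $n=1$, take $L=B_1$, the two-element chain, whose unique join-irreducible realizes the one-vertex graph $G$.

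For the inductive step, I would first verify that the induced subgraph $G'$ on $\{v_1,\ldots,v_{n-1}\}$ together with the restricted ordering $\mathcal{L}'=v_{n-1}<\cdots<v_1$ satisfies the hypotheses. Perfect elimination descends to $G'$ because for $i<n$ the set $\{v_i,v_{i-1},\ldots,v_1\}$ is contained in $V(G')$, so the clique $N_G[v_i]\cap\{v_i,\ldots,v_1\}$ coincides with $N_{G'}[v_i]\cap\{v_i,\ldots,v_1\}$. The nested neighborhood condition descends by intersecting each inclusion $N(v_{i+1})\subseteq N(v_i)$ with $V(G')$. By the inductive hypothesis, there is a congruence uniform lattice $L'$ whose labeled canonical join graph is $G'$.

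Next, I would add $v_n$ by doubling an interval $I=[a,b]$ in $L'$. The perfect elimination condition applied to $v_n$ (the minimum of $\mathcal{L}$) shows that $N(v_n)$ is a clique in $G$, hence also in $G'$. By Theorem~\ref{flag}, $N(v_n)$ is a face of the canonical join complex of $L'$, so the element $u=\Join N(v_n)\in L'$ satisfies $\can(u)=N(v_n)$. Combining the nested inclusion $N(v_n)\subseteq N(v_{n-1})$ with perfect elimination applied to $v_{n-1}$, the set $N(v_n)\cup\{v_{n-1}\}$ is also a clique in $G'$, hence a face, so $u\join v_{n-1}$ has canonical join representation $N(v_n)\cup\{v_{n-1}\}$. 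Proposition~\ref{doubling} identifies the new join-irreducible $(a,1)$, labeled $v_n$, and tells us that its neighbors in the enlarged canonical join graph are exactly those join-irreducibles $j$ of $L'$ that are canonical joinands of some $w\in I$ and satisfy $j\not\le a$; the old edges of $G'$ are preserved via the fiber correspondence.

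The task reduces to choosing $I=[a,b]$ so that this set of $j$ equals $N(v_n)$, while ensuring that $I\times\2$ is boolean, in accordance with the preamble to the proposition. One cannot take $I=[u,u]$, since every canonical joinand of $u$ lies below $u$; the element $a$ must be small enough that each $j\in N(v_n)$ satisfies $j\not\le a$, and yet large enough that no other join-irreducible of $L'$ appears as a canonical joinand of any $w\in[a,b]$ while also failing to lie below $a$. I would use the nested condition, together with the description of canonical joinands in doublings provided by Proposition~\ref{consequences}, to track through the doubling history of $L'$ and show that an appropriate $a$ below $N(v_n)$ exists such that $[a,u]$ is a boolean sublattice whose atoms correspond bijectively to $N(v_n)$. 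This is the main obstacle: pinning down the precise interval and checking that no spurious edge between $v_n$ and a vertex outside $N(v_n)$ is created. Once that is done, Proposition~\ref{doubling} immediately produces the congruence uniform lattice $L=L'[I]$ and confirms that its labeled canonical join graph is $G$.
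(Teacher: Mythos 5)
Your setup is sound and matches the paper's strategy up to the decisive moment: induction on $n$, realization of the induced subgraph on $\{v_1,\dots,v_{n-1}\}$ by a congruence uniform lattice $L'$, identification of the clique $N(v_n)\cup\{v_{n-1}\}$ as a face (via the nested-neighborhood hypothesis, perfect elimination, and Theorem~\ref{flag}), and the plan to add $v_n$ by doubling an interval whose new edges are read off from Proposition~\ref{doubling}. But the proof stops exactly where the real work begins. You correctly isolate the crux --- choosing the interval $I$ and showing no spurious edge from $v_n$ to a vertex outside $N(v_n)$ is created --- and then only assert that a suitable bottom element $a$ exists (``I would use the nested condition \dots to track through the doubling history \dots and show that an appropriate $a$ \dots exists such that $[a,u]$ is a boolean sublattice whose atoms correspond bijectively to $N(v_n)$''). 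That existence claim is never established, and it is not clear it even holds as stated: there is no general reason the canonical joinands of $u=\Join N(v_n)$ should be the atoms of a boolean interval below $u$ (booleanness of $I\times \2$ is a feature remarked on in the paper's preamble, not a lever in the proof, and Proposition~\ref{doubling} does not require it).

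The paper resolves this by exploiting the inductive structure more tightly than your sketch does: it arranges that $L'=L''[I]$ with $v_{n-1}$ equal to the join-irreducible $(a,1)$ created by the \emph{most recent} doubling, and then doubles $I'=[(a,0),(y,1)]$, where $(y,1)=(a,1)\join\Join\{j_{i_1},\dots,j_{i_k}\}$ is the join of $v_{n-1}$ with the elements of $N(v_n)\setminus\{v_{n-1}\}$ (in the non-adjacent case one takes $I'=[(a,1),(y,1)]$ instead, which is also how the edge to $v_{n-1}$ is switched off --- a case split absent from your proposal). Anchoring the interval at the fiber over $a$ is what makes the no-spurious-edge claim provable: if $(j,\epsilon')$ is a canonical joinand of some $(w,\epsilon)\in I'$ with $(j,\epsilon')\not\le (a,0)$, then Proposition~\ref{consequences} forces $w\in[a,b]$, $\epsilon'=0$, $j$ a canonical joinand of $w$ in $L''$ with $j\not\le a$, hence $(j,0)$ adjacent to $(a,1)=v_{n-1}$; and then, since $I'$ is a sublattice containing $(j,0)\join(a,1)$, flagness plus the perfect elimination ordering shows $\{j_{i_1},\dots,j_{i_k},(j,0),(a,1)\}$ joins canonically to $(y,1)$, whence uniqueness of canonical join representations pins $(j,0)$ inside $\{j_{i_1},\dots,j_{i_k}\}$. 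This is precisely the role the hypothesis $N(v_n)\subseteq N(v_{n-1})$ plays --- it lets every neighbor of $v_n$ be reached from the anchor $v_{n-1}$ --- whereas your sketch uses nestedness only to form the clique. Without this (or an equivalent) explicit choice of interval and verification, the argument has a genuine gap at its central step.
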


\begin{proof}
We prove the statement by induction on $n$.
There exists a congruence uniform lattice $L'$ whose labeled canonical join graph is the subgraph induced by the first $n-1$ vertices.
In particular, $L'$ is isomorphic to $L''[I]$ where $L''$ is congruence uniform, $I= [a,b]$ is an interval in $L''$, and the vertex $v_{n-1}$ corresponds to the join-irreducible element $(a,1)$ in $L'$.

We give the argument for the case when that $v_n$ and $v_{n-1}$ are neighbors.
The proof is similar when $v_{n}\not\in N(v_{n-1})$.
We write $\{v_{i_1},\ldots, v_{i_k}\}$ for the set of vertices $N(v_n)\setminus \{v_{n-1}\}$, and $j_{i_1},\ldots, j_{i_k}$ for the corresponding join-irreducible elements of $L'$.
Since $\mathcal{L}$ is a perfect elimination order, the vertices $\{v_{i_1},\ldots, v_{i_k}, v_{n-1}\}$ form a clique in the subgraph induced by $V\setminus\{v_n\}$.
By Theorem~\ref{flag}, the join $(a,1)\join \Join\{j_{i_1},\ldots, j_{i_k}\}$  is a canonical join representation for some element $(y,1)$ in $L'$.

Consider the interval $I'=[(a,0),(y,1)]$.
It is straightforward (with Proposition~\ref{doubling}) to verify that the new join-irreducible element in $L'[I']$ joins canonically with each element in $\{j_{i_1},\ldots, j_{i_k}, (a,1)\}$.
Suppose that $(w,\epsilon')\in I'$ and $j\in L''$ such that $(j,\epsilon')$ is a canonical joinand of $(w, \epsilon)$ with $(j,\epsilon')\not\le (a,0)$.
We claim that $(j,\epsilon')$ corresponds to a vertex in the set $\{v_{i_1}, \ldots, v_{i_k}, v_{n-1}\}$.
The claim is obvious if $(j,\epsilon') = (a,1)$, so we assume that $j\ne a$.

First we show that $(j, \epsilon')$ is adjacent to $(a,1)$ in the canonical join graph for $L'$.
The last item of Proposition~\ref{consequences} implies that $y\in [a,b]$.
Therefore, $w$ is also in $[a,b]$.
The first item of Proposition~\ref{consequences} says that $j$ is a canonical joinand of $w$ (in $L''$), and the second item says that $\epsilon'=0$.
Therefore, $j\not \le a$.
Proposition~\ref{doubling} implies that $(j,0)$ is adjacent to $(a,1)$ in the canonical join graph of $L'$, as desired.

Finally, we show that $(j,\epsilon')=(j,0)$ belongs to the subset $\{j_{i_1},\ldots, j_{i_k}, (a,1)\}$ of neighbors of $(a,1)$.
The third and fourth items of Proposition~\ref{consequences} say that $(w,1)$ has $(j,0)$ and $(a,1)$ as canonical joinands.
Since $(w,\epsilon)$ is in $I'$, so is $(w,1)$.
In particular, $(j,0)\join (a,1)\in I'$.
Since $I'$ is a sublattice of $L'$, we have that the join $[(a,1)\join (j,0)] \join [(a,1)\join \Join\{j_{i_1},\ldots, j_{i_k}\}]$ also belongs to $I'$.
Therefore, $\Join (\{j_{i_1},\ldots, j_{i_k}, (a,1),(j,0) \})=(y,1).$
Because $\mathcal{L}$ is a perfect elimination ordering (and $(j,0)$ corresponds to a vertex $v_l$ with $l<n-1$), the set $\{j_{i_1},\ldots, j_{i_k}, (j,0), (a,1) \}$ is a face of the canonical join complex for $L'$. 
Therefore, $(j,0)$ is a canonical joinand of $(y,1)$, as desired.
The statement of the proposition now follows immediately from Proposition~\ref{doubling}.

The same argument, replacing the interval $[(a,0),(y,1)]$ with $[(a,1),(y,1)]$, proves the case in which $v_n$ and $v_{n-1}$ are not adjacent.
\end{proof}
\end{example}

\begin{example}[Cycle graphs]\label{cycle graphs}
\normalfont
For each positive integer $n$, we claim that there is a finite congruence uniform lattice whose canonical join graph is isomorphic to the unlabeled cycle graph $C_n$ on $n$ vertices.
We provide an illustration with examples for $n=5,6,7$.
Leftmost in Figure~\ref{C5} is the Hasse diagram for a distributive lattice $L$, and rightmost is the Hasse diagram obtained by doubling the interval $[a,e]$ in $L$.
(The middle Hasse diagram, which is isomorphic to the leftmost Hasse diagram, serves only to make the  doubling as clear as possible.)
Each distributive lattice is in particular congruence uniform, so the rightmost lattice is congruence uniform, as desired.
It is an easy exercise to verify that the canonical join graph for this right-most lattice is isomorphic to $C_5$.

The analogous construction is given in Figure~\ref{C6 and C7} for $n= 6$ and 7.
In these cases, the lattice $L$ being doubled is not distributive.
Because it is easy to check that $L$ is congruence uniform, we leave the details to the reader.
(Note that $C_n$, for $n\ge 5$ is among the minimal graphs excluded by Theorem~\ref{comparability_graph}, and so does not appear as the canonical join graph for a distributive lattice.)

\begin{figure}[h]
  \centering
\scalebox{.85}{\includegraphics{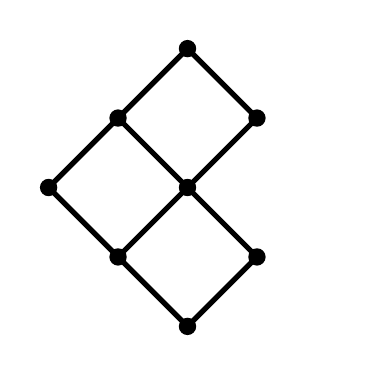}  \includegraphics{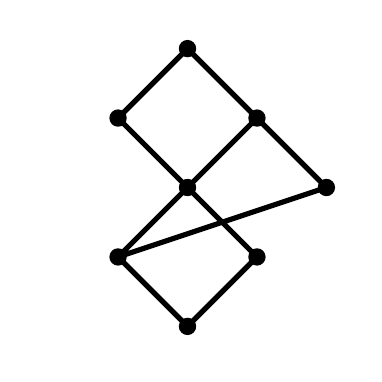} \includegraphics{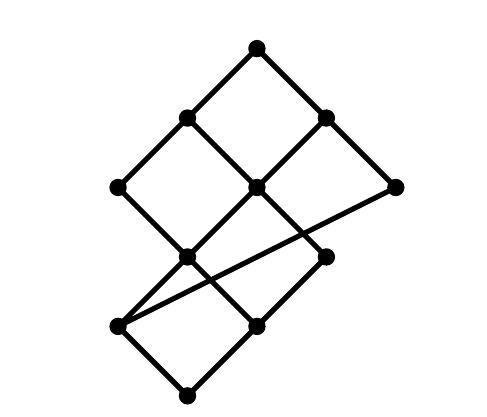}
    \begin{picture}(0,50)(0,50)
   \put(-350,82){$b$}
   \put(-370,106){$c$}
   \put(-290.5,82){$a$}
   \put(-348, 130){$d$}
   \put(-290.5, 130){$e$}
    \put(-233,87){$b$}
   \put(-155,106){$c$}
   \put(-178.5,87){$a$}
   \put(-233, 130){$e$}
   \put(-178.5, 130){$d$}
   \end{picture}}
   \caption{The two leftmost graphs are isomorphic Hasse diagrams for the distributive lattice $L$. Rightmost is the lattice obtained by doubling the interval $[a,e]$ in $L$.}
   \label{C5}
\end{figure} 

\begin{figure}[h]
  \centering
   \scalebox{.8}{\includegraphics{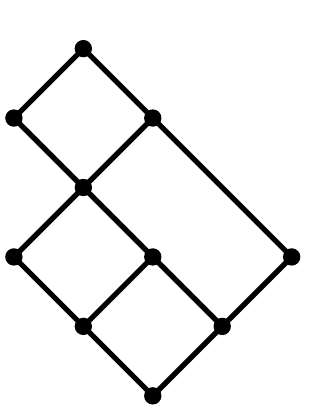}\qquad\includegraphics{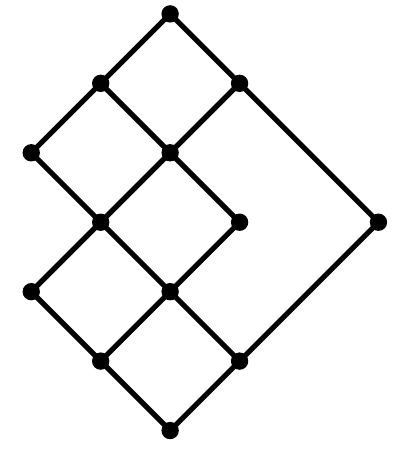}
   \includegraphics{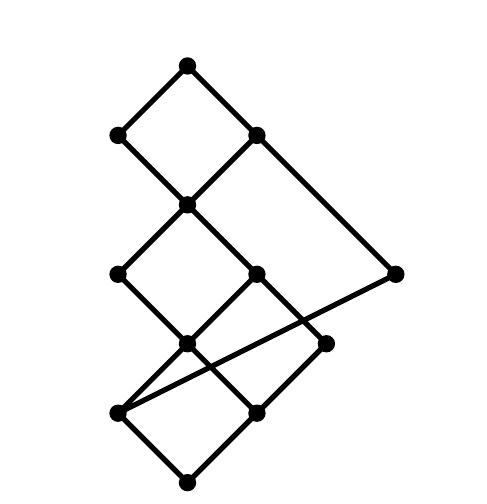}\includegraphics{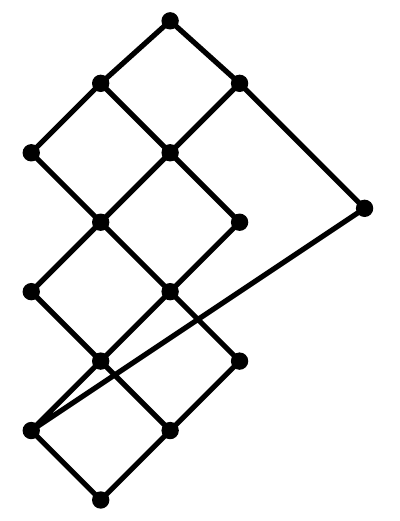}
     \begin{picture}(0,50)(0,50)
   \put(-437.5,95){$a$}
   \put(-495, 139){$e$}
    \put(-180.5,120){$a$}
   \put(-235, 159){$e$}
   \end{picture}}
   \caption{Doubling the interval $[a,e]$ in the leftmost congruence uniform lattice yields the left-middle lattice, whose canonical join graph is isomorphic to $C_6$.
   Doubling the interval $[a,e]$ in the right-middle lattice yields the rightmost lattice, whose canonical join graph is isomorphic to~$C_7$.}
   \label{C6 and C7}
\end{figure}

\end{example}

\section{Discussion and open problems}\label{discussion}
The discussion in Section~\ref{operations} does not constitute a complete list of lattice theoretic operations which preserve (join)-semidistributivity.
For example, the derived lattice $\mathbb{C}(L)$ discussed in \cite{derived}, the box product defined in \cite{box product} (see also, \cite[Corollary 8.2]{box product and jsd}), and the lattice of multichains from \cite{multichains} all preserve (join)-semidistributivity.

Because it is relatively easy, we will discuss this last operation in a small example.
Recall that an \newword{$m$-multichain} in a poset $\poset$ is a collection of $m$ elements satisfying $x_1\le x_2\le \ldots\le x_m$.
We write an $m$-multichain as a tuple $(x_1,\ldots, x_m)$ or more compactly as a vector $\vec{x}$.
We write the set of all $m$-multichains, partially ordered component-wise, as $\poset^{[m]}$.
When $\poset$ is a lattice, then $\poset^{[m]}$ is a sublattice of the $m$-fold direct product of $\poset^m$ (see \cite[Theorem~2.4]{multichains}).
It follows immediately that if $L$ satisfies~\ref{jsd} or ~\ref{msd} then $L^{[m]}$ also does, for each $m\in \mathbb{N}$ (see also, \cite[Proposition~2.10]{multichains}).
In the proposition below, $(j)_k$ is the element $(\0,\ldots,\0,j,\ldots,j)$, where $k$ is the left-most coordinate that is equal to $j$.
\begin{proposition}\label{join-irreducibles}
Suppose that $L$ is a finite lattice.
Then, $\Irr(L^{[m]})$ is equal to the set $\{(j)_k: j\in \Irr(L)\}$, where $k\in[m]$.
\end{proposition}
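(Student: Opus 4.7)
The plan is to verify both containments of the claimed equality by analyzing directly how the multichain constraint interacts with covers in $L^{[m]}$.

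For the containment $\supseteq$, I would show that each $(j)_k$ with $j\in\Irr(L)$ and $k\in[m]$ is join-irreducible in $L^{[m]}$ by exhibiting a unique lower cover. Any $\vec{y}<(j)_k$ must satisfy $y_i=\hat{0}$ for $i<k$ and $y_k\le\cdots\le y_m\le j$. If $y_k=j$, then monotonicity forces $y_{k+1}=\cdots=y_m=j$, making $\vec{y}=(j)_k$; so we need $y_k<j$. Maximizing under this constraint gives $y_k=j_*$ (the unique element covered by $j$ in $L$) and $y_{k+1}=\cdots=y_m=j$, which is the unique lower cover of $(j)_k$.

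For the containment $\subseteq$, let $\vec{x}=(x_1,\ldots,x_m)$ be join-irreducible in $L^{[m]}$. For each $i\in[m]$ set
\[
\vec{y}^{(i)} = (\underbrace{\hat{0},\ldots,\hat{0}}_{i-1},x_i,x_i,\ldots,x_i),
\]
which lies in $L^{[m]}$ because $x_i\le x_j$ for $j\ge i$, and satisfies $\vec{y}^{(i)}\le \vec{x}$. The coordinatewise join gives $\Join_{i=1}^{m}\vec{y}^{(i)}=\vec{x}$, since at coordinate $j$ we recover $\Join_{i\le j}x_i=x_j$. Join-irreducibility of $\vec{x}$ then forces $\vec{y}^{(k)}=\vec{x}$ for some $k$, which says $x_i=\hat{0}$ for $i<k$ and $x_i=x_k$ for $i\ge k$; in other words $\vec{x}=(x_k)_k$.

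It remains to check that $j:=x_k$ is join-irreducible in $L$. If not, then $j>\hat{0}$ has at least two lower covers $a_1,\ldots,a_r$ with $r\ge 2$, and using the standard fact that distinct lower covers of $j$ are pairwise incomparable one obtains $\Join_\ell a_\ell = j$. The coordinatewise join then yields $\Join_\ell (a_\ell)_k = (j)_k = \vec{x}$ with each $(a_\ell)_k\ne \vec{x}$, contradicting join-irreducibility of $\vec{x}$. Distinctness of the listing $\{(j)_k\}$ is automatic, since $k-1$ is recovered as the number of leading $\hat{0}$ coordinates. The only even mildly delicate step is the cover computation in the first direction, where monotonicity on the coordinates past $k$ must be invoked to rule out spurious lower covers; the rest is bookkeeping.
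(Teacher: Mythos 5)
Your proof is correct, but it takes a genuinely different route from the paper's in both directions. For $\subseteq$, the paper argues by two separate contradictions: it first splits the leftmost nonzero coordinate as $w_k=a\join b$ to show $w_k\in\Irr(L)$, and then, assuming $w_i\ne w_k$ for some $i>k$, hand-builds a nontrivial two-element join-representation of $\vec{w}$ using covers $y\in\covdown(w_i)$ with $w_k\le y$ and $y''\in\covdown(w_k)$. You instead write down the single canonical decomposition $\vec{x}=\Join_{i}\vec{y}^{(i)}$ into tail-constant multichains, which forces the shape $\vec{x}=(x_k)_k$ in one stroke, and only afterwards check $x_k\in\Irr(L)$ by joining the lower covers of $x_k$; note the order of deductions is reversed relative to the paper (shape first, coordinate irreducibility second). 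Your decomposition is arguably cleaner and avoids the paper's cover-based construction entirely, at the modest cost of invoking that joins in $L^{[m]}$ are computed coordinatewise (which the paper also uses, via $L^{[m]}$ being a sublattice of $L^m$). For $\supseteq$, the paper takes an arbitrary join $\vec{w}\join\vec{v}=(j)_k$ and uses join-irreducibility of $j$ at coordinate $k$ together with the multichain condition, while you exhibit $(\0,\ldots,\0,j_*,j,\ldots,j)$ as the unique lower cover of $(j)_k$ and appeal to the finite-lattice characterization of join-irreducibility via $|\covdown|=1$; your version has the small bonus of identifying $((j)_k)_*$ explicitly. One step you should make explicit: the inference from $y_k<j$ to $y_k\le j_*$ is exactly where join-irreducibility of $j$ enters (every element strictly below a join-irreducible $j$ in a finite lattice lies below its unique lower cover), and similarly your ``standard fact'' really uses the cover relation, since for distinct lower covers $a_1\ne a_2$ incomparability gives $a_1<a_1\join a_2\le j$ and then $a_1\covered j$ forces $a_1\join a_2=j$. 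Neither point is a gap, but both deserve a line.
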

\begin{proof}
We first show that $(j)_k$ is join-irreducible when $j\in \Irr(L)$.
Suppose that $\vec{w}\join \vec{v} = (j)_k$.
We have $w_i \join v_i = j$, for each $i\ge k$.
Since $j$ is join-irreducible, we may assume that $w_k=j$.
Since $\vec{w}$ is a multichain, we have that $j\le w_i$ for each $i\ge k$.
Thus, $\vec{w}= (j)_k$, as desired.

Next, suppose that $\vec{w}\in \Irr(L^{[m]})$.
Let $w_k$ be the first nonzero entry in $\vec{w}$, and assume that $w_k\not \in \Irr(L)$ so that there exist $a$ and $b$ in $L\setminus \{w_k\}$ with $w_k = a\join b$.  
Then $\vec{w} = (\0,\ldots, \0, a, w_{k+1}, \ldots, w_m) \join (\0,\ldots, \0, b, w_{k+1},\ldots, w_m)$ in $L^{[m]}$.
By this contradiction, we conclude that $w_k\in \Irr(L)$.
Next, suppose that $w_i\ne w_k$, for some $i>k$.
Since $w_{k}< w_i$, there is an element $y\in \covdown(w_i)$ such that $w_{k}\le y$.
We have the following nontrivial join-representation of $\vec{w}$:
\[\vec{w} = (\0,\ldots, \0, w_k,\ldots, y, w_{i+1},\ldots, w_m) \join  (\0,\ldots,\0, y'',w_{k+1},\ldots, w_i,\ldots, w_m),\]
where $y''\in \covdown(w_{k})$.
Therefore $w_{i}=w_k$, and the proposition follows.
\end{proof}

\begin{example}
\normalfont
Let $L$ be the weak order on the symmetric group $S_3$, and consider $L^{[2]}$.
The lattice $L$ and $L^{[2]}$ are shown in Figure~\ref{multichain example}, and the corresponding canonical join complexes are shown in Figure~\ref{multichain complex}.
Observe that if $j\join j'$ is a canonical join representation in $L$ then both $(\0, j)\join (\0, j')$ and $(j, j) \join (j',j')$ are canonical join representations in $L^{[2]}$.
This accounts for the edges $\{(\0,a), (\0,b)\}$ and $\{(a,a), (b,b)\}$ in the complex for $L^{[2]}$.

To see how we obtain the remaining edges in Figure~\ref{multichain complex}, consider the canonical join representation of $(d,\1)$.
Observe that $\covdown((d,\1)) = \{(d,d), (b,\1)\}$.
It is easily checked that $(d, d)$ is the smallest element in $L^{[2]}$ whose join with $(b,\1)$ is equal to $(d, \1)$.
Similarly, $(\0,a)$ is the smallest element whose join with $(d, d)$ is equal to $(d, \1)$.
Therefore, the canonical join representation for $(d,\1)= (d,d)\join(\0,a)$.
The canonical join representations of the remaining elements in $L^{[2]}$ are computed similarly.

\begin{figure}[h]
  \centering 
  \includegraphics{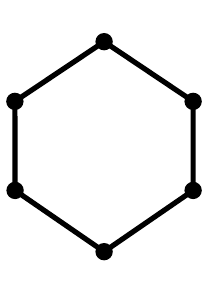}\qquad \qquad
   \scalebox{1.2}{\includegraphics{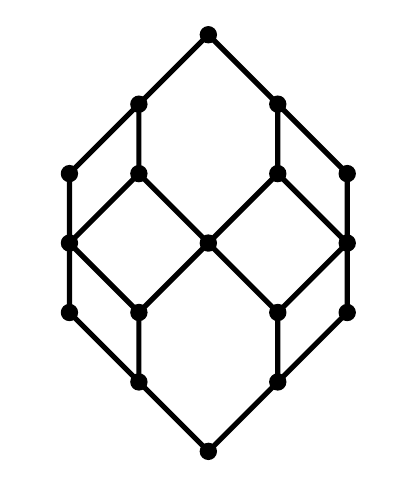}}
  \begin{picture}(0,50)(0,50)
    \put(-253.5,83){$a$}
    \put(-253.5,108){$c$}
   \put(-186,83){$b$}
   \put(-186,108){$d$}

   \put(-113,77){\tiny{$(0,a)$}}
    \put(-50.5,78){\tiny{$(0,b)$}}
   \put(-108,122.5){\tiny{$(0,c)$}}
   \put(-135,100){\tiny{$(a,a)$}}
   \put(-59,122.5){\tiny{$(0,d)$}}
   \put(-30,100){\tiny{$(b,b)$}}
   
    \put(-145,160){\tiny{$(c,c)$}}
     \put(-20,160){\tiny{$(d,d)$}}
   \put(-59,195){\tiny{$(d,\1)$}}
   \put(-72,160){\tiny{$(b,\1)$}}
   \end{picture}
   \vspace{5pt}
     \caption{Left: The weak order for the symmetric group $S_3$.
     Right: The lattice of 2-multichains.}
        \label{multichain example}
\end{figure}

\begin{figure}[h]
  \centering 
\begin{tikzpicture}
\draw [black, ultra thick] (0,0) -- (1,0);
\draw[fill] (0,0) circle [radius=0.07];
\draw[fill] (1,0) circle [radius=0.07];
\draw[fill] (0,1.1) circle [radius=0.07];
\draw[fill] (1,1.1) circle [radius=0.07];

\node [below] at (-0.23,-.05) {$a$};
\node [below] at (1.2,-.01) {$b$};
\node [below] at (-0.23,1.05) {$c$};
\node [below] at (1.2,1.07) {$d$};
\end{tikzpicture}
\qquad \qquad
\scalebox{1.1}{
\begin{tikzpicture}
\draw [black, ultra thick] (0,0) -- (1,0);
\draw [black, ultra thick] (0,0) -- (0,1);
\draw [black, ultra thick] (1,0) -- (1,1);
\draw [black, ultra thick] (0,1) -- (1,1);

\draw [black, ultra thick] (1,0) -- (1.5,-.5);
\draw [black, ultra thick] (0,0) -- (-.5,-.5);

\draw [black, ultra thick] (1,1) -- (1.5,1.55);
\draw [black, ultra thick] (0,1) -- (-.5,1.55);

\draw[fill] (0,0) circle [radius=0.07];
\draw[fill] (1,0) circle [radius=0.07];
\draw[fill] (0,1) circle [radius=0.07];
\draw[fill] (1,1) circle [radius=0.07];

\draw[fill] (1.5,1.55) circle [radius=0.07];
\draw[fill] (-.5,1.55) circle [radius=0.07];
\draw[fill] (1.5,-.5) circle [radius=0.07];
\draw[fill] (-.5,-.5) circle [radius=0.07];

\node [left] at (-.1,.05) {\tiny{$(\0, a)$}};
\node [right] at (1.1,.05) {\tiny{$(\0, b)$}};
\node [left] at (-0.05,1.05) {\tiny{$(b, b)$}};
\node [right] at (1.05,1.05) {\tiny{$(a,a)$}};

\node [right] at (1.6,1.6) {\tiny{$(\0,c)$}};
\node [left] at (-.5,-.5) {\tiny{$(d,d)$}};
\node [left] at (-.6,1.6) {\tiny{$(\0,d)$}};
\node [right] at (1.5,-.5) {\tiny{$(c,c)$}};
\end{tikzpicture}}

   \vspace{5pt}
     \caption{Left: The canonical join complex for weak order for the symmetric group $S_3$.
     Right: The canonical join complex for the lattice of 2-multichains.}
        \label{multichain complex}
\end{figure}
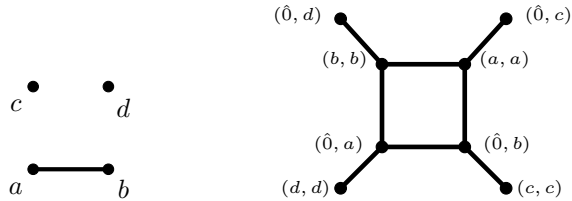
\end{example}

This example is emblematic of the general construction, as can be seen in the next proposition which describes the canonical join graph for $L^{[m]}$.
We leave the details of proof to the reader.
\begin{proposition}\label{cjr in multichains}
Suppose that $L$ is a finite semidistributive lattice with join-irreducible elements $j$ and $j'$.
\begin{enumerate}
\item If $i<k$ then $\{(j)_i, (j')_k\}$ is a face in the canonical join complex for $L^{[m]}$ if and only if $j'$ is a canonical joinand of $j\join j'$ in $L$.
\item If $i=k$, then $\{(j)_i, (j')_k\}$ is a face in the canonical join complex for $L^{[m]}$ if and only if $\{j, j'\}$ is a face in the canonical join complex for $L$.
\end{enumerate} 
\end{proposition}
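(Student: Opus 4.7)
The plan is to apply Lemma~\ref{lem: kappa canonical} inside $L^{[m]}$, which is semidistributive whenever $L$ is. That lemma reduces the question of whether $\{(j)_i,(j')_k\}$ is a face of the canonical join complex of $L^{[m]}$ to verifying both strict inequalities $((j)_i)_*\join (j')_k<(j)_i\join(j')_k$ and $(j)_i\join ((j')_k)_*<(j)_i\join(j')_k$. First I would identify the unique lower cover of $(j)_k$ in $L^{[m]}$. Since $(j)_k$ is join-irreducible by Proposition~\ref{join-irreducibles}, it has exactly one lower cover, and a direct inspection of the multichains just below $(j)_k$ yields
\[ ((j)_k)_* = (\0,\ldots,\0,j_*,j,\ldots,j),\]
with $j_*$ in position $k$; the formula remains valid in the degenerate cases $j_*=\0$ and $k=m$.

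Next I would do a coordinate-by-coordinate computation of the three relevant joins. In Case~(1), with $i<k$, the join $(j)_i\join(j')_k$ takes the value $j$ in positions $i,\ldots,k-1$ and $j\join j'$ in positions $k,\ldots,m$. Replacing $(j)_i$ by its lower cover alters only position $i$ (from $j$ to $j_*$), so the inequality $((j)_i)_*\join(j')_k<(j)_i\join(j')_k$ holds automatically, i.e. $(j)_i$ is always a canonical joinand. Replacing $(j')_k$ by its lower cover alters only position $k$, reducing the second inequality to $j\join j'_*<j\join j'$ inside $L$; by Lemma~\ref{lem: kappa canonical} applied in $L$, this is equivalent to $j'$ being a canonical joinand of $j\join j'$ in $L$, which proves Case~(1).

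For Case~(2), with $i=k$, the analogous coordinate-wise computation shows that $((j)_k)_*\join(j')_k$ and $(j)_k\join((j')_k)_*$ differ from $(j)_k\join(j')_k$ only in positions $\ge k$, and the two strict-inequality conditions reduce to $j_*\join j'<j\join j'$ and $j\join j'_*<j\join j'$ respectively. By Lemma~\ref{lem: kappa canonical} applied in $L$, these say precisely that $j$ and $j'$ are both canonical joinands of $j\join j'$ in $L$, which by the second half of that lemma is equivalent to $\{j,j'\}$ being a face of the canonical join complex of $L$.

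The only real obstacle is bookkeeping: one must verify the lower-cover formula (especially the edge cases $j_*=\0$ and $k=m$, where the formula collapses but remains correct), confirm that after each reduction one still has a valid multichain (immediate from $j_*\le j$), and dispose of the trivial case $j=j'$, where $\{(j)_i,(j')_k\}$ is comparable and hence fails to be a face; in that case the condition $(j)_i\join((j')_k)_*<(j)_i\join(j')_k$ fails since the left side already equals $(j)_i$. Once these checks are made, the result is a direct application of Lemma~\ref{lem: kappa canonical} in both $L^{[m]}$ and $L$.
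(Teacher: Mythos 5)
Your proof is correct and is essentially the argument the paper intends: the paper explicitly leaves the details to the reader, and your route---reducing ``$\{(j)_i,(j')_k\}$ is a face'' to the two strict inequalities of Lemma~\ref{lem: kappa canonical} applied in the (join-semidistributive) lattice $L^{[m]}$, computing the lower cover $((j)_k)_*=(\0,\ldots,\0,j_*,j,\ldots,j)$, and comparing joins coordinate by coordinate---is exactly the toolkit the paper develops and uses in its worked example for $L^{[2]}$. Two small points of bookkeeping: the equivalence in case~(1) genuinely requires $j\ne j'$ (when $j=j'$ your computation correctly shows the pair is not a face, but the right-hand side ``$j'$ is a canonical joinand of $j\join j'=j$'' is then true, so distinctness is an implicit hypothesis of the statement rather than something your disposal of the case repairs), and the step ``both elements are canonical joinands of their join if and only if the pair is a face'' follows from irredundancy of the canonical join representation, not from the second half of Lemma~\ref{lem: kappa canonical} itself.
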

Note that the operation on the canonical join complex corresponding to $L~\mapsto~L^{[m]}$ depends on the lattice $L$ (not just the canonical join complex for $L$).

 \begin{question}\label{question: operations}
What lattice theoretic operations (preserving join-semidistributivity) correspond to geometric operations on the canonical join complex that are independent of $L$?
\end{question}

Alternatively, it would be interesting to know which geometric operations (on the class of finite simplicial complexes) have a corresponding lattice theoretic analogue.
We point out that conspicuously absent from the discussion in Section~\ref{operations} is closure under taking induced subcomplexes (see Remark~\ref{forcing}). 

\begin{question}
Let $\mathcal{C}$ be the class of simplicial complexes that can be realized as the canonical join complex for some finite semidistributive lattice.
Is $\mathcal{C}$ closed under taking induced subcomplexes?
\end{question}

Say that $\G_n$ is the set of labeled graphs that can be realized the (labeled) canonical join graph for a congruence uniform lattice with $n$ join-irreducible elements, and $\G$ is the union $\bigcup_{n\in \mathbb{N}} \G_n$.
Using Stembridge's poset Maple package (\cite{StembridgePackages}) and Proposition~\ref{doubling}, we have counted the number of elements of $\G_n$ for $n\le 6$. 
While our computations indicate that not every labeled graph appears, they also suggest that $\G$ is closed under subgraphs (so that the corresponding class of simplicial complexes is closed under taking subcomplexes).
We close the paper by asking two related questions:
\begin{question}\label{question: graphs}
Which labeled graphs can be realized as the labeled canonical join graph for some congruence uniform lattice?
\end{question}
\begin{question}\label{question: iso}
Suppose that $G$ is the canonical join graph for a congruence uniform lattice $L$.
What data, in addition to $G$, is necessary in order to determine $L$ up to isomorphism?
\end{question}

\section{Acknowledgements}
The author thanks Patricia Hersh (who suggested the connection to the crosscut complex), Thomas McConville, Nathan Reading and Victor Reiner for many helpful suggestions.


\end{document}